\numberwithin{equation}{section}
\theoremstyle{plain}
\newtheorem{theorem}{Theorem}
\newtheorem*{theorem*}{Theorem}
\newtheorem*{corollary*}{Corollary}
\newtheorem{lemma}[theorem]{Lemma}
\newtheorem{proposition}[theorem]{Proposition}
\newtheorem{corollary}[theorem]{Corollary}
\newtheorem{conjecture}[theorem]{Conjecture}
\numberwithin{theorem}{section}
\theoremstyle{remark}
\newtheorem*{remark}{Remark}
\newtheorem{definition}{Definition}
\newtheorem*{definition*}{Definition}
\numberwithin{definition}{subsection}
\def\R{\mathbb{R}}
\def\N{\mathbb{N}}
\def\Z{\mathbb{Z}}
\def\C{\mathbb{C}}
\def\P{\mathbb{P}}
\def\E{\mathbb{E}}
\def\1{\mathbbold{1}}
\def\d{\mathrm{d}}
\def\pd{\partial}
\def\rdown#1{\left\lfloor #1 \right\rfloor}
\def\D{\mathbb{D}}
\def\T{\mathbb{T}}
\def\cparam{\mathbf{c}}
\def\cadlag{c\`adl\`ag }
\def\couplingfailure{\tau_{\mathrm{coupling}}}
\def\Phiale{\Phi^{\mathrm{ALE}}}
\def\Phiaux{\Phi^{*}}
\def\thetaale{\theta^{\mathrm{ALE}}} 
\def\thetaaux{\theta^{*}}
\def\preimageale{\phi}
\def\preimageaux{\bar\phi}
\def\preimagemulti{\phi}
\def\fale#1{f^{\mathrm{ALE}}_{#1}} 
\def\faux#1{f^{*}_{#1}} 
\def\Psiale{\Psi}
\def\Psiaux{\overline{\Psi}}
\def\Phimulti{\Phi^{\mathrm{multi}}}
\title{Tip growth in a strongly concentrated aggregation model follows local geodesics}
\author{Frankie Higgs\thanks{\href{mailto://frankiehiggs@gmail.com}{frankiehiggs@gmail.com}}\;  \orcidlink{0000-0002-7300-8412} \\ University of Rome La Sapienza}
\begin{document}

\maketitle

\begin{abstract}
	We analyse the \emph{aggregate Loewner evolution} (ALE),
	introduced in 2018 by Sola, Turner and Viklund
	to generalise versions of diffusion limited aggregation (DLA)
	in the plane using complex analysis.
	They showed convergence of the ALE
	for certain parameters to a single growing slit.
	Started from a non-trivial initial configuration of $k$ needles
	and the same parameters,
	we show that the small-particle scaling limit of ALE
	is the \emph{Laplacian path model},
	introduced by Carleson and Makarov,
	in which the tips grow along geodesics towards $\infty$.
	
	Our proof involves analysis of Loewner's equation
	near its singular points,
	and we extend martingale methods to the backward equation,
	where what we have to control is non-adapted.
	Most conformal growth models introduce
	an extra regularisation factor
	to deal with the singularities in Loewner's equation
	at the sharp tips and right-angle bases of slit particles.
	As an intermediate step we prove a limit result for a model
	with no such regularisation factor,
	developing methods
	which should prove useful in analysing other
	weakly-regularised models with non-trivial limits.
\end{abstract}


\tableofcontents

\section{Introduction}\label{sec1}

We study two models of growth in the plane, identified with $\C$.
The aggregate Loewner evolution (ALE) model
was introduced in \cite{stv-ale}
as a version of diffusion limited aggregation (DLA)
which could take advantage of the natural isotropy of $\C$
and complex-analytic tools.
Via the Riemann mapping theorem,
we have a one-to-one correspondence between a certain family
of conformal maps and the collection of
compact simply-connected subsets of $\C$.
By composing simple conformal maps (corresponding to ``particles'')
we construct a map representing more complicated clusters.
The continuity of the one-to-one correspondence between maps
and clusters allows us to prove limit results about the cluster
by analysing the corresponding map,
so a powerful set of tools from complex analysis
becomes available for analysing growth models.

The Laplacian path model (LPM) was introduced in \cite{lpm}
as a model of tip growth,
in which $k$ simple curves attached to the unit disc
grow towards $\infty$ in the Riemann sphere.
The growth speeds are controlled by
a certain Laplacian field,
which allows the LPM to model ``screening'' effects,
in which the growth rate of each curve is affected by how much
its ``view of $\infty$'' is blocked by other curves.
The strength of the influence of this field is parameterised
by a single $\eta \in \R$.

We will prove that the ALE model of particle aggregation converges,
in the limit as individual particles become small,
to the LPM.
This is an extension of the result of \cite{stv-ale},
that ALE started from a disc with no slits initially attached
converges to an LPM with one arm, i.e.\ to a straight line.
Our methods involve an explicit analysis of the tips of particles,
where usual martingale methods are difficult to apply
due to singularities in \emph{Loewner's equation}
which describes both the ALE and LPM.

\begin{figure}[p]
	\centering
	\includegraphics[width=0.49\textwidth, trim=70 40 60 50, clip]{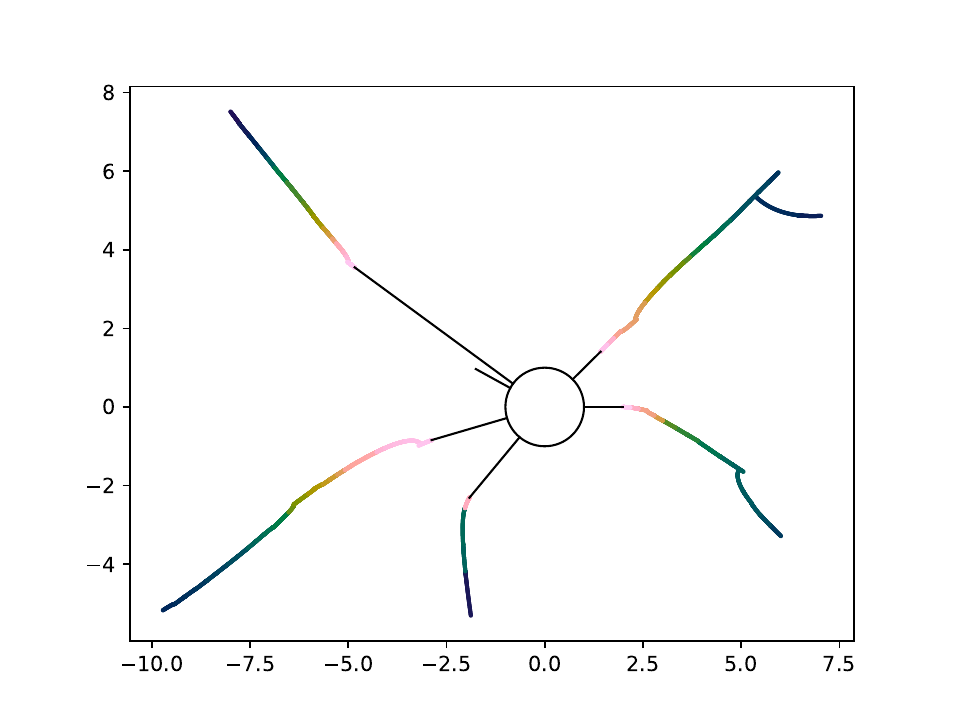}
	\includegraphics[width=0.49\textwidth, trim=105 40 95 50, clip]{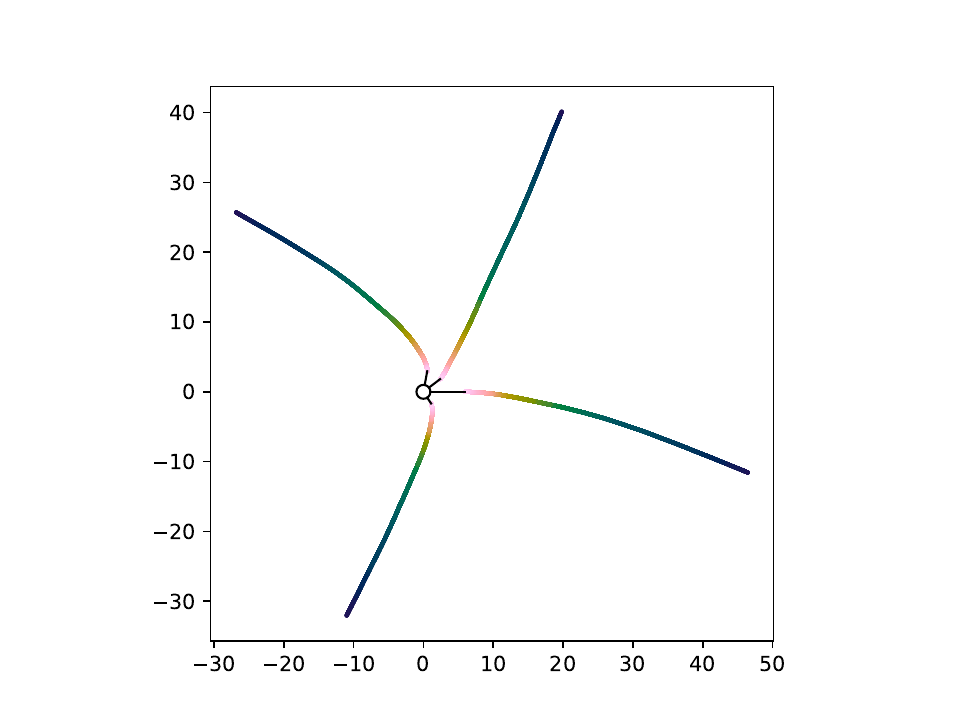}
	\includegraphics[angle=90, width=0.49\textwidth, trim=140 40 130 50, clip]{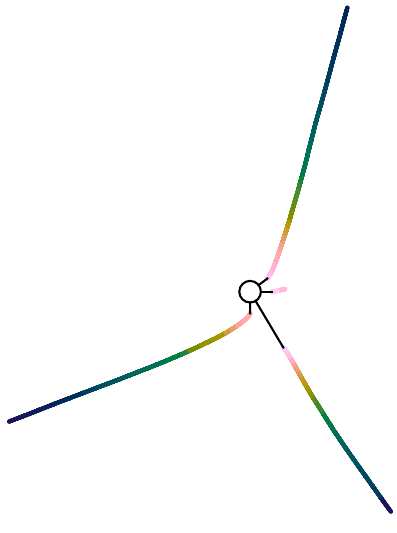}
	\includegraphics[angle=90, width=0.49\textwidth, trim=130 40 120 50, clip]{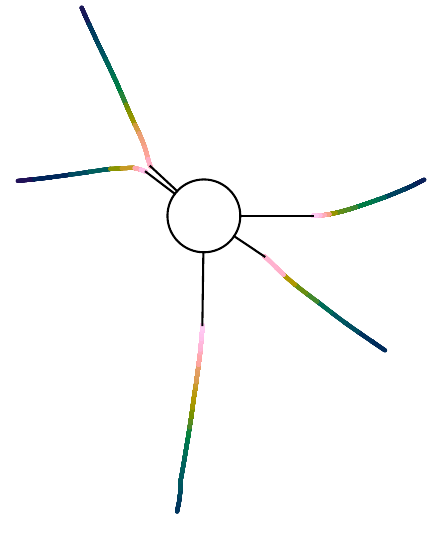}
	\includegraphics[width=0.49\textwidth, trim=65 60 70 70, clip]{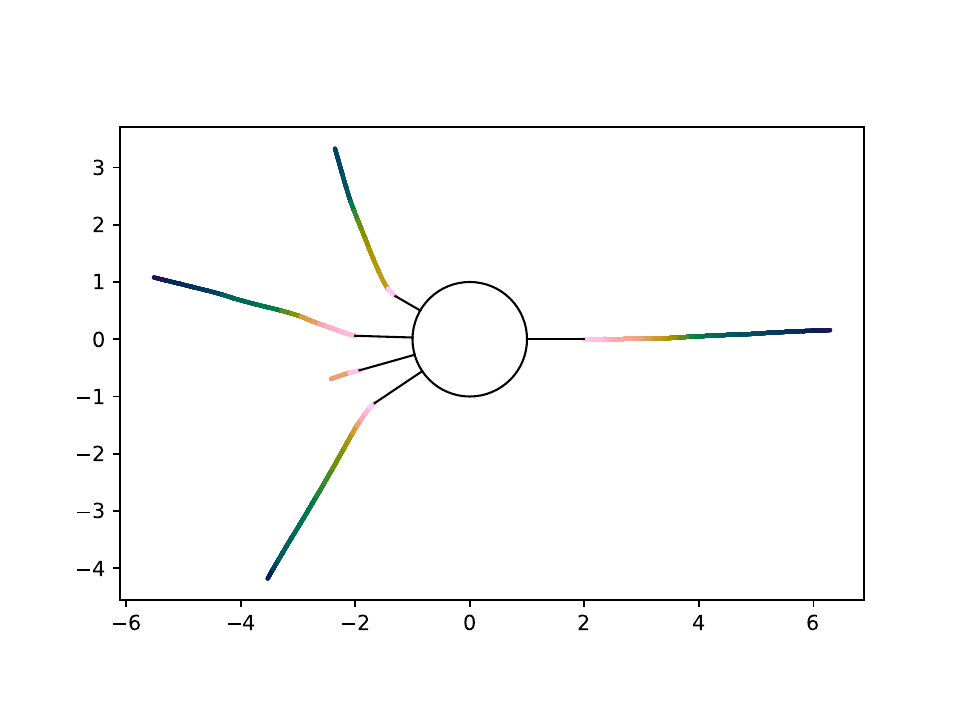}
	\includegraphics[width=0.49\textwidth, trim=70 55 60 65, clip]{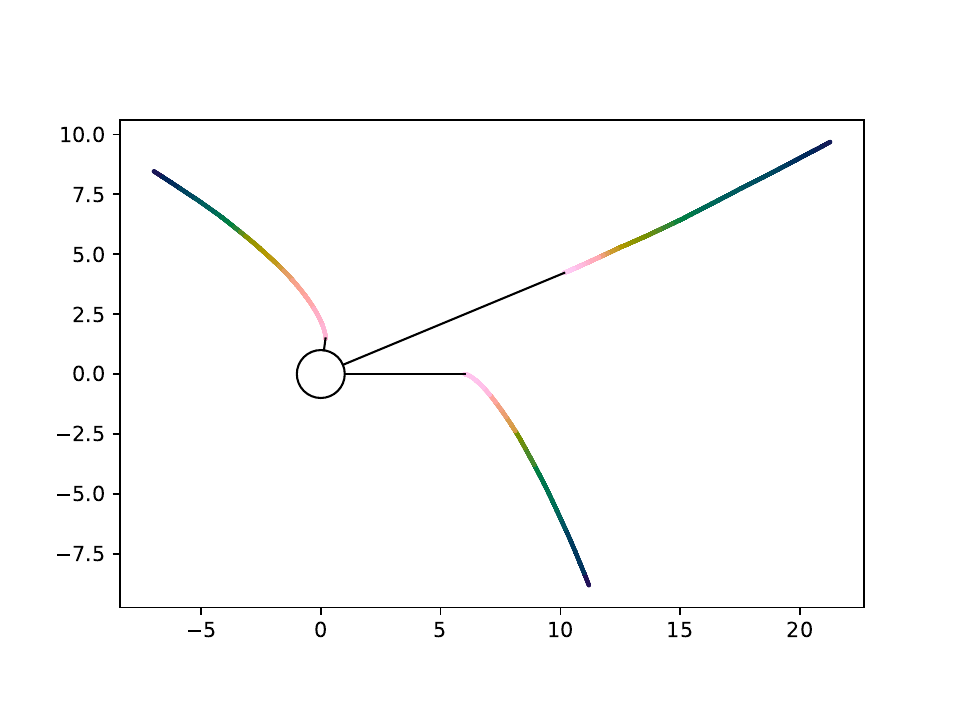}
	\caption{\label{fig:ale-simulation}
		Simulations of the aggregate Loewner evolution
		started from a non-trivial configuration of arms.
		The initial cluster is drawn with thin black lines,
		and the attached particles are coloured by arrival time
		(the lighter particles arrived earlier).
		In the top-left diagram $\sigma$ is large enough that some particles
		are not attached at the tips of the existing slits
		and we see ``branching'' behaviour
		which does not occur in the limiting regime.
		In the three diagrams on the left one slit seems to grow much slower
		than the rest,
		agreeing with the observation in \cite{lpm} that there is competition
		between the arms in the Laplacian path model,
		with only a certain number (depending on $\eta$)
		likely to survive (i.e.\ grow to a length proportional to the diameter
		of the entire cluster) as $T \to \infty$.
		The code used for the simulations is available at
		\href{https://github.com/frankiehiggs/ALE-from-slits}{https://github.com/frankiehiggs/ALE-from-slits}
	}
\end{figure}

\section{Preliminaries}

\subsection{Loewner's equation}

Loewner's equation describes growing sets in the complex plane.
The aggregate Loewner evolution (ALE) model
of particle growth is defined by composing conformal maps,
but we can also view it as a solution to Loewner's equation
with a certain driving function.
Loewner's equation also defines the claimed limit:
the Laplacian path model (LPM).

Loewner's equation encodes families of growing sets
by measures on a cylinder.
Continuity properties of the encoding
allow us to show the sets are close
if the corresponding measures are close.

\begin{definition}
	\label{def:loewner-maps}
	Let $\D = \{ z \in \C : |z| < 1 \}$
	be the open unit disc,
	and $\Delta = \C_{\infty} \setminus \overline{\D}$
	the open \emph{exterior disc},
	the complement of the closed unit disc in the Riemann sphere.
	Let $\T = \partial \D$ denote their common boundary,
	the unit circle.
	
	Given a compact, simply connected $K \subseteq \C$
	with $0 \in K$ (and $K \not= \{0\}$),
	by the Riemann mapping theorem there is a unique conformal map
	$f_K : \Delta \to \Delta \setminus K$
	with $f_K(\infty) = \infty$
	and $f_K(z) = e^c z + O(1)$
	for some $c = c(K) \in \R$ as $z \to \infty$.
	We call $c(K)$ the \emph{(logarithmic) capacity} of $K$.
\end{definition}

If $K$ is a strict superset of $\overline{\D}$, then $c(K) > 0$.%
\footnote{This follows from the fact that $c(K) = \int_0^1 \log |f_K(e^{2\pi i t})|\,\d t$.}
Another useful property of logarithmic capacity
is \emph{additivity}:
if $f_K$ can be written $f_{K_1} \circ f_{K_2}$,
then $c(K) = c(K_1) + c(K_2)$.

We can encode certain families of growing compact sets $(K_t)_{t \geq 0}$
using \emph{Loewner's equation}.

\begin{definition}
	\label{def:loewners-equation}
	Let $\xi : [0, T] \to \R$ be a \cadlag function%
	\footnote{A function $f : [0,T] \to \R$ is said to be \cadlag
		(right-continuous with left limits) if,
		for every $x \in [0,T]$ and every decreasing sequence $x_n \downarrow x$
		and increasing sequence $y_n \uparrow x$,
		$f(x_n) \to f(x)$ as $n \to \infty$
		and $\lim_{n\to\infty}f(y_n)$ exists
		(but is not necessarily equal to $f(x)$).}.
	Then there is a unique solution $\varphi : [0,T]\times \Delta \to \Delta$ to {Loewner's equation},
	\begin{equation}
		\label{eq:loewner}
		\varphi_0(z) = z,
		\quad
		\frac{\partial}{\partial t} \varphi_t(z) 
		=
		\varphi_t'(z) z \frac{z + e^{i\xi_t}}{z - e^{i\xi_t}},
		\quad
		z \in \Delta,
	\end{equation}
	corresponding to a growing cluster
	$K_t := \C_\infty \setminus \varphi_t(\Delta)$
	which is parameterised by capacity,
	i.e. $c(K_t) = t$.
	The \emph{driving function} $\xi_t$
	encodes where the growth at time $t$ is located on the cluster boundary.
\end{definition}

For the existence and uniqueness of the solution to \eqref{eq:loewner},
see \cite[Theorem 3.4]{duren-univalent}
(note that every \cadlag function is piecewise continuous,
meeting the conditions in \cite{duren-univalent}).

We can generalise \eqref{eq:loewner} to encode clusters growing
at arbitrary parts of their boundary, specified by a measure:
\begin{definition}
	\label{def:loewner-with-measure}
	Given a family of probability measures on $\T$, $(\mu_t)_{t \geq 0}$,
	subject to measurability conditions on $t \mapsto \mu_t$
	there is a unique solution to Loewner's equation
	\begin{align}
		\label{eq:loewner-with-measure}
		\varphi_0(z) = z,
		\quad
		\frac{\partial}{\partial t} \varphi_t(z) 
		=
		\varphi_t'(z) \int_\T z \frac{z + e^{i\theta}}{z - e^{i\theta}}\,\d \mu_t(\theta),
		\quad
		z \in \Delta.
	\end{align}
\end{definition}

It is a simple generalisation to start Loewner's equation from
a non-trivial initial condition $\varphi_0$,
provided $\varphi_0$ is a conformal map of the form $f_K$ for some $K$ as above.

\subsection{Particle aggregation models}
\label{sec:particle-aggregation}

We construct the aggregate Loewner evolution (ALE) cluster
by composing conformal maps.

\begin{definition}
	\label{def:particle-map}
	Let $\cparam > 0$,
	and define $d = d(\cparam) > 0$
	to be the unique length
	such that $\overline{\D}\cup (1,1+d]$
	has capacity $\cparam$.
	We can determine explicitly that $d$ satisfies $(d+2)^2/(d+1) = 4e^{\cparam}$,
	and asymptotically $d \sim 2\cparam^{1/2}$ as $\cparam \to 0$.
	
	Let $f^{\cparam}$ be the unique conformal map
	$\Delta \to \Delta \setminus (1,1+d]$
	with $f^{\cparam}(z) \sim e^{\cparam}z$ as $z \to \infty$.
	Given an angle $\theta \in \R$,
	we can define the rotated map
	\begin{align*}
		&f^{\theta,\cparam} : \Delta \to \Delta \setminus e^{i\theta}(1,1+d],\\
		&f^{\theta,\cparam}(z)
		=
		e^{i\theta} f^{\cparam}(e^{-i\theta}z).
	\end{align*}
	
	Given a sequence of angles
	$(\theta_n)_{n \geq 1}$
	and of capacities $(c_n)_{n \geq 1}$,
	let $f_j = f^{\theta_j,c_j}$
	and
	\begin{align*}
		\Phi_n := f_1 \circ f_2 \circ \dots \circ f_n.
	\end{align*}
	Define the $n$th cluster
	$K_n$ as the complement of $\Phi_n(\Delta)$,
	so
	\[
	\Phi_n : \Delta \to \C_{\infty} \setminus K_n.
	\]
	Note that the total capacity is
	$c(K_n) = \sum_{k=1}^{n} c_k$.
\end{definition}

Any cluster constructed this way can be encoded by Loewner's equation.
For $t \geq 0$ let $n_t = \inf\{ n \geq 1 : \sum_{k=1}^{n} c_k > t \}$ (if $c_n = \cparam$ for all $n$, then $n_t = \rdown{t/\cparam}+1$).\footnote{If $\sum_{k=1}^\infty c_k < \infty$, then we can solve \eqref{eq:loewner} up to a finite time.}
Let $\xi_t := \theta_{n_t}$,
and let $(\varphi_t)_{t \geq 0}$ be the solution to \eqref{eq:loewner}
with driving measure $(\xi_t)_{t \geq 0}$.
Then for all $n$, $\varphi_{c(K_n)} = \Phi_n$.

\subsection{Aggregate Loewner evolution}

If $(\theta_n)_{n \geq 1}$
and $(c_n)_{n \geq 1}$
are stochastic processes,
then $(K_n)_{n \geq 1}$
as constructed in
Section~\ref{sec:particle-aggregation}
is a stochastic process
in the space of compact subsets of $\C$.
The aggregate Loewner evolution ALE($\alpha,\eta,\sigma$),
introduced in \cite{stv-ale},
is one such process.

\begin{definition}
	\label{def:ALE}
	We define the ALE inductively:
	for $n \geq 0$
	we choose $\theta_{n+1}$,
	conditional on $\theta_1, \dots, \theta_n$,
	according to the probability density function
	\begin{align*}
		h_{n+1}(\theta)
		=
		\frac{1}{Z_n}
		\left|
		\Phi_n'\left(
		e^{\sigma + i\theta}
		\right)
		\right|^{-\eta},\,
		\theta \in (-\pi, \pi],
	\end{align*}
	where $Z_n := \int_{\T} | \Phi_n'(e^{\sigma + i\theta})|^{-\eta}\,\d \theta$
	is a normalising factor.
	The particle capacities can also be modified.
	For $n \geq 0$ let
	\[
	c_{n+1} = \cparam | \Phi_n'(e^{\sigma + i\theta_{n+1}})|^{-\alpha}.
	\]
\end{definition}

Several simulations of the ALE in the setting of Theorem~\ref{thm:ale-lpm}
are shown in Figure~\ref{fig:ale-simulation}.

\begin{remark}
	We have introduced three parameters: $\alpha \in \R$, $\eta \in \R$,
	and $\sigma > 0$. In this paper we will work with $\alpha = 0$,
	which means that all particles have the same capacity.
	After the statement of Theorem~\ref{thm:ale-lpm}
	we will discuss the effect of changing $\alpha$.
	
	If $\sigma = 0$
	then the poles and zeroes of $\Phi_n'$
	on $\partial \Delta = \T$
	mean that the measure $h_{n+1}$
	would not necessarily be well-defined.
	We will take $\sigma > 0$ but with $\sigma \to 0$
	as $\cparam \to 0$.
	
	The other parameter, $\eta$,
	controls the influence of \emph{harmonic measure}.
	If $\eta = 0$
	then $(\theta_n)_{n\geq 1}$
	is a sequence of independent  random variables uniformly distributed on $(-\pi,\pi]$,
	corresponding to the Hastings--Levitov model \cite{hl-model}.
	
	We can view $|(\Phi_{n}^{-1})'(z)|$
	as the ``density of harmonic measure''
	at $z \in \partial K_n$,
	and so when $\eta > 0$,
	$h_{n+1}$ is concentrated on areas of high harmonic measure,
	and when $\eta < 0$ it is concentrated
	on areas of low harmonic measure.
\end{remark}

\begin{remark}
	In \cite{small-eta},
	a phase transition was proved for negative $\eta$: for $\eta < -2$
	and sufficiently small $\sigma$
	the ALE cluster converges to a \emph{Schramm--Loewner evolution}.
	
	The authors of \cite{stv-ale} showed for $\eta > 1$
	that the scaling limit of an ALE with initial cluster $\overline{\D}$
	is a single straight slit.
	In fact they showed a stronger degeneracy result:
	with probability tending to 1 as $\cparam \to 0$,
	each particle attaches near the tip of its immediate predecessor.
	Theorem~\ref{thm:ale-lpm} extends this result to the case where $K_0$
	is the union of $\overline{\D}$ with more than one slit.
\end{remark}

A series of videos illustrating the effects of $\eta$ and $\alpha$
is available at \href{https://www.youtube.com/playlist?list=PLiaV5rk6Gk7pCjwGQaVOZ1edfbU3xK6td}{this link}.\footnote{\href{https://www.youtube.com/playlist?list=PLiaV5rk6Gk7pCjwGQaVOZ1edfbU3xK6td}{\texttt{https://www.youtube.com/playlist?list=PLiaV5rk6Gk7pCjwGQaVOZ1edfbU3xK6td}}}

\subsection{Laplacian path model}
\label{sec:lpm-definition}

The Laplacian path model (LPM)
was defined in 2002 by Carleson and Makarov \cite{lpm}
to generalise several models of needle-like growth in mathematical physics.
We define the Laplacian path model as a growing set
in the complex plane using Loewner's equation \eqref{eq:loewner-with-measure}:
the LPM is encoded by an initial configuration of $k$ arms
$K_0$ (which uniquely determines $\varphi_0$),
and a driving measure $(\mu_t)_{t \geq 0}$.

\begin{definition}
	\label{def:LPM}
	Let $K_0$ be the union of rays of the form
	$e^{i\theta_j} (1, 1+d_j]$,
	where the $\theta_j$ are distinct angles in $(-\pi,\pi]$
	and $d_j > 0$
	for $j = 1, \dots, k$.
	Let $\Delta_0 = \Delta \setminus K_0$,
	and $\Phi_0^{\mathrm{LPM}}$ the unique conformal map
	$\Phi_0^{\mathrm{LPM}} : \Delta \to \Delta_0$
	satisfying $\Phi_0^{\mathrm{LPM}}(z) = e^{c_0} z + O(1)$
	as $z \to \infty$ for some positive $c_0$.
	
	We define the LPM cluster with parameter $\eta$,
	which has $k$ growing slits whose
	tips at time $t > 0$ are at $a^j_t \in \Delta \setminus K_0$,
	via Loewner's equation
	with initial condition $\varphi_0(z) = \Phi_0^{\mathrm{LPM}}(z)$.
	We have the driving measure
	\begin{align}
		\label{lpm-definition}
		\mu^{\mathrm{LPM}}_t = \sum_{j=1}^{k} p^j_t \delta_{\phi^j_t},
	\end{align}
	where $e^{i\phi^j_t}$ is
	the preimage of $a^j_t$ under $\Phi_t^{\mathrm{LPM}}$,
	and
	\begin{align}
		\label{lpm-weights}
		p^j_t := \frac{|(\Phi_t^{\mathrm{LPM}})''(e^{i\phi^j_t})|^{-\eta}}{Z_t},
	\end{align}
	and
	\begin{align}
		Z_t := \sum_{j=1}^{k} |(\Phi_t^{\mathrm{LPM}})''(e^{i\phi^j_t})|^{-\eta}.
	\end{align}
\end{definition}

\begin{remark}
	We can think of the growth of each slit in the geodesic LPM at time $t$
	as being in the direction of the hyperbolic geodesic
	from $a^j_t$ to $\infty$ in $\Delta \setminus K_t$,
	at a speed proportional to
	$|(\Phi_t^{\mathrm{LPM}})''(e^{i\phi^j_t})|^{-(\eta-1)}$.
\end{remark}
\begin{remark}
	The weight given to the $j$th slit by \eqref{lpm-weights}
	depends on the second derivative of $\Phi_t^{\mathrm{LPM}}$,
	while the probability of attaching near a given point
	in the ALE model depends on the first derivative of $\Phi_t^{\mathrm{ALE}}$.
\end{remark}
\begin{remark}
	In \cite{lpm},
	Carleson and Makarov
	obtained the definition involving the second derivative
	by considering growth speeds
	proportional to the Laplacian field $\nabla G_t$,
	where $G_t$ is the Green's function of $\Delta \setminus K_t$,
	given by $G_t(z) = \log |(\Phi_t^{\mathrm{LPM}})'(z) |$.
	
	We could also derive $\mu_t^{\mathrm{LPM}}$
	by removing the regularisation from the ALE attachment measure.
	For $\sigma > 0$, let $\mu_t^\sigma$
	be the regularised ALE measure on $\T$
	corresponding to the LPM cluster at time $t$,
	i.e.\ the measure with density $h^\sigma(\theta) \propto (\Phi_t^{\mathrm{LPM}})'(e^{\sigma + i\theta})|^{-\eta}$.
	If $\eta > 1$ then as $\sigma \to 0$
	the density becomes concentrated around the zeroes of $(\Phi_t^{\mathrm{LPM}})'$
	at $\phi_t^j$ for $j = 1, \dots, k$,
	and $|(\Phi_t^{\mathrm{LPM}})'(e^{\sigma + i\phi^j_t})|
	\sim \sigma |(\Phi_t^{\mathrm{LPM}})''(e^{i\phi_t^j})|$.
	Hence
	$\mu_t^\sigma \to \mu_t^{\mathrm{LPM}}$ as $\sigma \to 0$
	for a fixed time $t$.
\end{remark}
\begin{remark}
	The $\eta$ in Definition~\ref{def:LPM} corresponds to $\eta-1$
	in the definition given in \cite{lpm};
	we have shifted it to match the $\eta$
	used in the definition of the ALE in \cite{stv-ale}.
\end{remark}
\begin{remark}
	For any configuration of $k$ slits,
	$K_0$ is a ``gearlike domain'' as defined in \cite{gearlike},
	and so Schwarz--Christoffel methods can be used to compute
	an expression for $\Phi_0^{\mathrm{LPM}}$.
	To compute the expression,
	the preimages of all the tips and bases of the slits
	must be determined,
	which can be done numerically using methods from \cite{gearlike}.
	We used this to compute the maps for arbitrary slit configurations
	in our simulation of the ALE shown in Figure~\ref{fig:ale-simulation}.
\end{remark}

Almost all known results about the Laplacian path model appear in
\cite{lpm} and \cite{selander-thesis}.
In \cite{lpm}, Carleson and Makarov defined both the geodesic LPM above
and the \emph{needle LPM},
in which the cluster is a collection of straight slits
which grow at speed proportional to $|(\Phi^{\mathrm{LPM}}_t)''(e^{i\phi^j_t})|^{-(\eta-1)}$.\\

Selander examined in his PhD thesis \cite{selander-thesis}
a version of the geodesic LPM in which the weights
$p^j_t$ are fixed constants,
and the needle LPM with growth speeds
proportional to harmonic measure at the tips,
which corresponds to $\eta = 3/2$.
Each of these can be viewed as a simplification of a ``non-branching DLA''.

For the needle LPM with $\eta = 3/2$,
Selander proved results about the stability of stationary solutions:
slit configurations in which the ratio of lengths remains constant
for all $t \geq 0$.
For his modified geodesic model,
he proved convergence as $t \to \infty$
to a stationary configuration determined by the weights,
when started from any initial configuration.\\

For a simplified ``chordal'' geodesic LPM,
in which finitely many curves grow from the tip of an infinite half-line,
Carleson and Makarov
proved a stability result:
starting the process from a two-arm configuration,
if $\eta < 2$ then both arms always survive,
while there are configurations in which only one arm survives
if $\eta > 2$.
We will use some of the techniques they developed
for the chordal case
when we analyse the ALE and non-chordal LPM.

For the unsimplified geodesic LPM
(i.e.\ the model defined by \eqref{lpm-definition}),
Carleson and Makarov also proved a stability result.
Theorem 5 of \cite{lpm} states that if $1 < \eta < \eta_c$
for a critical value $\eta_c = \frac{18}{3 + 4\log 2} \approx 3.11815$
then the symmetric three-armed cluster
is a \emph{stable} stationary solution,
i.e.\ the driving measure for an LPM started from an initial condition
which is \emph{close} to three symmetric arms in a certain sense
converges, as $t \to \infty$, to a symmetric driving measure
$\frac{1}{3}(\delta_{\theta} + \delta_{\theta+2\pi/3} + \delta_{\theta+4\pi/3})$.
Moreover, it is unstable for $\eta > \eta_c$,
in that small perturbations may lead to convergence to a different stationary solution.

They also proved limiting results for the simpler ``needle'' LPM,
and made a number of conjectures about the geodesic LPM.\\

Although little progress has been made since 2002
on analysing the geodesic LPM,
various modifications have been studied and applied.
In \cite{half-plane-finger-growth}, Gubiec and Szymczak
apply a similar construction to model finger growth in the half-plane.
We hope that relating the LPM to the more active area
of conformal growth models
will revive interest in the model and many of the
open problems stated in \cite{lpm}.\\

The LPM and similar models have also been used to model the development of cracks
in materials and formation of systems of rivers and streams.
In particular, the chordal geodesic LPM with two needles at the tip of a half-line
can be used to model the bifurcation of a stream.
Carleson and Makarov proved that the angle between the two resulting streams
in the chordal LPM
must be $2\pi / 5$ (also predicted by other authors using conformal mappings \cite{3.99, laplacian-networks}),
which agrees with the average angle of $72^\circ$
measured in a Florida stream system
\cite{streams, streams2}.

\subsection{Main result}

Our main theorem concerns convergence of the ALE as $\cparam \to 0$
to the Laplacian path model cluster started from the same initial conditions.

\begin{theorem}
	\label{thm:ale-lpm}
	For a fixed $T > 0$,
	let $(\Phi^{\mathrm{ALE}}_t)_{t\in[0,T]}$ be the ALE($\alpha,\eta, \sigma$) map
	started from the initial cluster
	$K_0 \cup \overline{\D}$, where
	$K_0 = \bigcup_{j=1}^{k} e^{i\phi^j_0}(1, 1+d_j]$
	for $d_j > 0$ and distinct $\phi^j_0 \in [0,2\pi)$.
	Let $\mu^{\mathrm{ALE}}_t$ be the driving measure for $\Phi^{\mathrm{ALE}}$,
	i.e. $\mu^{\mathrm{ALE}}_t = \delta_{\theta_{\rdown{t/\cparam}+1}}$.
	Let the parameters be $\alpha = 0$, $\eta > 1$ and $\sigma = \cparam^{\gamma}$
	for $\gamma = \max\left(\frac{2(\eta+2)}{\eta-1}, 8\right)$ (see the remark after Theorem~\ref{thm:ale-aux-global} for an explanation of each term).
	
	Let $(\Phi^{\mathrm{LPM}}_t)_{t \in [0,T]}$ be the map for the LPM
	started from the same initial conditions
	with the same parameter $\eta$,
	and let $\mu^{\mathrm{LPM}}_t$ be the driving measure
	$\mu^{\mathrm{LPM}}_t = \sum_j \bar p^j_t \delta_{\bar \phi^j_t}$.
	
	Then
	as $\cparam \to 0$, $\mu^{\mathrm{ALE}}_t \otimes m_{[0,T]}$
	converges in distribution to $\mu^{\mathrm{LPM}}_t \otimes m_{[0,T]}$
	as random elements of the space of measures $S = \T \times [0,T]$,
	where $m_{[0,T]}$ is normalised Lebesgue measure on $[0,T]$.
	
	In particular this means if $K^{\mathrm{ALE}}_t$ and $K^{\mathrm{LPM}}_t$
	are the respective clusters at time $t$,
	we have $K^{\mathrm{ALE}}_T \to K^{\mathrm{LPM}}_T$
	weakly as a random element of the space of compact subsets of $\C$
	(equipped with the Carath\'{e}odory topology).
\end{theorem}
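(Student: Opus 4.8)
The plan is to establish convergence of the driving measures by a martingale argument, then transfer it to convergence of the clusters via continuity of the Loewner transform. First I would reduce the problem: since $S = \c M(\T \times [0,T])$ with the weak topology is a nice Polish space and all the measures in question are supported on $\T \times [0,T]$ with uniformly bounded total mass, tightness of the family $\{\mu^{\mathrm{ALE}}_t \otimes m_{[0,T]}\}_{\cparam}$ is automatic, so it suffices to identify the limit. I would test against a generic smooth $g \in C^\infty(\T \times [0,T])$ and show $\int g \, \d(\mu^{\mathrm{ALE}}_t \otimes m) \to \int g\, \d(\mu^{\mathrm{LPM}}_t \otimes m)$ in probability; because the LPM limit is deterministic given $K_0$ (the $\bar\phi^j_t$ and $\bar p^j_t$ evolve by an ODE system once the slits separate), convergence in probability to a constant is what we need, and it then upgrades to convergence in distribution.

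The heart of the argument is a coupling/stability estimate for the conformal maps. Writing $\Phi^{\mathrm{ALE}}_t$ and $\Phi^{\mathrm{LPM}}_t$ for the two maps and $\preimageale^j_t$ for the preimages of the ALE slit tips, I would track the vector $(\preimageale^1_t, \dots, \preimageale^k_t)$ of ALE tip-preimages together with the ``capacity increments'' $c_n$ and show it stays close to the LPM vector $(\bar\phi^1_t, \dots, \bar\phi^k_t)$. The key input is the decomposition of each particle step into drift plus fluctuation: conditional on $\c F_n$, the new angle $\theta_{n+1}$ is drawn from the density \eqref{eq:density}, which for $\eta > 1$ and $\sigma = \cparam^\gamma$ concentrates (at scale $\sigma$, hence $o(1)$ as $\cparam \to 0$) around the $k$ zeros of $\Phi_n'$ on $\T$, namely the tip-preimages. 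So $\theta_{n+1}$ lands near $\preimageale^{J}_n$ for a random index $J$ whose conditional law is $(p^j_n)_{j=1}^k$ with $p^j_n \propto |(\Phi_n^{\mathrm{ALE}})''(e^{i\preimageale^j_n})|^{-\eta}$ up to $(1+o(1))$ errors — precisely matching \eqref{lpm-definition} after the rescaling in the third remark. The effect of attaching a particle of capacity $\cparam$ near $e^{i\preimageale^J_n}$ is, to leading order in $\cparam$, to move the corresponding tip along the hyperbolic geodesic toward $\infty$ and to perturb the other preimages by $O(\cparam)$ deterministic amounts; computing the $O(\cparam)$ drift and $O(\cparam)$-variance martingale increment of $\preimageale^j$ and of $\arg(\text{tip})$ under the Loewner flow, and summing over the $\asymp t/\cparam$ steps, gives the LPM ODE in the limit by a standard Euler-scheme-with-noise argument (e.g.\ Kurtz's approximation theorem, or an $L^2$ Gr\"onwall estimate on the difference of the two trajectories). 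The martingale part vanishes because the per-step variance is $O(\cparam^2)$ (or $O(\cparam^2/\sigma^{\text{const}})$, which is still $o(\cparam)$ for the chosen $\gamma$), so $\sum$ of increments has variance $O(\cparam) \to 0$.

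The main obstacle — and what the abstract flags — is controlling the conformal maps \emph{near their singular points}: the derivative estimates $|(\Phi_n^{\mathrm{ALE}})'(e^{\sigma + i\theta})| \sim \sigma |(\Phi_n^{\mathrm{ALE}})''(e^{i\preimageale^j_n})|$ and the Loewner-flow perturbation bounds degenerate as $\sigma \to 0$ because $\T$ meets the cluster boundary at the slit tips, where $\Phi_n'$ vanishes. One must show (i) the zeros of $\Phi_n'$ stay genuinely separated on $\T$ — i.e.\ the tips do not collide and the preimages $\preimageale^j_n$ stay $\Omega(1)$ apart — on the time interval $[0,T]$, which for $\eta > 1$ should follow from a Carleson–Makarov-type stability estimate analogous to the two-arm result quoted in the introduction, and (ii) uniform (in $n$, in $\cparam$) control of $(\Phi_n^{\mathrm{ALE}})''$ and $(\Phi_n^{\mathrm{ALE}})'''$ near the tips, so that the local geodesic picture is valid at the resolution $\sigma = \cparam^\gamma$. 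This is where $\gamma = \gamma(\eta)$ gets pinned down: we need $\sigma$ small enough that the attachment measure is concentrated (forcing $\gamma$ not too large... wait, concentration improves as $\sigma \to 0$, so as $\cparam \to 0$ any fixed $\gamma > 0$ gives $\sigma \to 0$), but $\sigma$ not so small that the $O(\cparam^2/\sigma^{m})$ fluctuation and error terms from the near-singular derivative bounds fail to be $o(\cparam)$; balancing these two competing requirements forces a specific range of $\gamma$, and choosing $\gamma$ in that range completes the proof. Finally, once $\mu^{\mathrm{ALE}}_t \otimes m \to \mu^{\mathrm{LPM}}_t \otimes m$ weakly, the last sentence of the theorem follows from the Carath\'eodory-continuity of the map sending a driving measure on $\T \times [0,T]$ to the cluster $K_T = \C_\infty \setminus \varphi_T(\Delta)$ solving \eqref{eq:loewner-with-measure}, applied to the continuity/Skorokhod representation of the weak limit.
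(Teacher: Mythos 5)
You correctly identify the skeleton of the argument — concentration of the ALE attachment density near the tips for $\eta>1$, a drift-plus-martingale decomposition of the particle steps, a Gr\"onwall-type stability estimate for the tip preimages, and transfer from driving measures to clusters via Carath\'eodory continuity of the Loewner transform — and your observation that the LPM limit is deterministic (so convergence in probability suffices) is exactly why the paper works with $d_{\mathrm{BW}}\to 0$ in probability. But there are two places where the sketch glosses over difficulties that the paper spends most of its length resolving, and a naive execution of the plan as stated would stall there.

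First, your ``standard Euler-scheme-with-noise / $L^2$ Gr\"onwall'' step is not standard, because the quantity you must control is not only the vector of preimages $(\preimageale^j_t)_j$ but also the weights $p^j_t \propto |\Phi_t''(e^{i\preimageale^j_t})|^{-\eta}$, since the conditional drift of the preimages depends on these weights. The evolution of $\Phi_t''$ at a tip is genuinely singular: working with the backward transition maps $h_s = \Phi_s^{-1}\circ\Phi_t$, one has $h_s'(x_j(t))=0$ and $h_s''(x_j(t))\to\infty$ as $s\uparrow t$ like $(t-s)^{-1/2}$. The paper has to desingularize by tracking $\kappa(s)(w_j(s)-x_j(s))$ rather than $\kappa(s)$ alone (equation \eqref{eq:less-singular-evolution}), and even after that the Gr\"onwall inequality one obtains has a $1/\sqrt{t-s}$ kernel,
\[
\delta_{\mathrm{total}}(t)\le A\int_0^t \frac{\delta_{\mathrm{total}}(s)}{\sqrt{t-s}}\,\d s+\eps,
\]
which requires Abel's integral equation of the second kind, not ordinary Gr\"onwall. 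Your proposal has no mechanism to handle this singularity, and without tracking $\Phi_t''$ the Gr\"onwall loop does not close.

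Second, you assert the attachment measure ``concentrates at scale $\sigma$'' around the zeros of $\Phi_n'$ and lands near a tip with probability $1-o(1)$, but this is precisely the statement of Proposition~\ref{thm:ale-to-aux-inductive}, and proving it is nontrivial: it requires a lower bound on the partition function $Z_n$, and then a classification of boundary points by whether the backward Loewner flow brings them close to a driving singularity, together with derivative estimates (Lemma~\ref{on-circle-deriv-bound}, Lemma~\ref{no-contribution-from-circle}) that keep the ``far from tip'' contribution $O(\cparam^2)$ per step. These estimates, not a generic Carleson--Makarov-type stability result, are what actually exclude attachments away from the tips. On the smaller points: your tentative balancing argument for $\gamma$ (``$\sigma$ not so small that $O(\cparam^2/\sigma^m)$ errors blow up'') does not match the paper — all of the coupling error bounds improve monotonically as $\sigma\to 0$, so the constraints are one-sided lower bounds on $\gamma$, which is why the choice $\gamma = \frac{2(\eta+2)}{\eta-1} \vee \frac{5\eta+10}{2\eta}\vee 8$ appears. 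Finally, the paper finds it cleaner to pass through two explicit intermediate models (the auxiliary ALE, which forces growth exactly at a tip, and the multinomial model, which replaces the regularised first derivative by the on-circle second derivative), coupling each pair in turn; this modularizes the argument in a way your single-step sketch does not, and makes the separation of ``where does the particle land'' from ``does the weight converge'' explicit.
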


\begin{remark}
	In the slit convergence result of \cite{stv-ale}, a different value of $\gamma$ is used,
	which is $\Theta((\eta-1)^{-2})$
	as $\eta \downarrow 1$,
	while the $\gamma$ given above is $\Theta((\eta-1)^{-1})$.
	The slit convergence result of \cite{stv-ale} starts with
	a single slit of capacity $\cparam$,
	while ours begin at a macroscopic size,
	so it is not surprising that they require a much smaller $\sigma$
	for their attachment measure to detect the tip of that small initial slit.
\end{remark}

\begin{remark}
	Convergence of $K_n \to K$ in the Carath\'{e}odory topology
	is equivalent to convergence of the corresponding maps
	$f_{K_n} \to f_K$ on compact subsets of $\Delta$
	\cite[Chapter 3.6]{lawler-conformal-book}.
\end{remark}

\begin{remark}
	\label{remark:alpha}
	In \cite[Corollary 10]{stv-ale} it is shown that if $\alpha > 0$,
	the ALE	started from the trivial initial cluster $\overline{\D}$
	still converges to the union of $\overline{\D}$ with a single slit.
	We conjecture that a similar modification of our Theorem~\ref{thm:ale-lpm}
	should also be true:
\end{remark}

\begin{conjecture}
	\label{thm:alpha-conjecture}
	Let $T$, $K_0$, $\eta$ and $\sigma$ all be as in Theorem~\ref{thm:ale-lpm}
	and suppose $\alpha \geq 0$.
	Run the ALE$(\alpha, \eta, \sigma)$ process until the capacity reaches $T$,
	then the ALE cluster converges
	as $\cparam \to 0$
	to the Laplacian path model with parameter $\alpha + \eta$.
\end{conjecture}

We expect that proving Conjecture~\ref{thm:alpha-conjecture} would
be similar to the $\alpha = 0$ case.
The computations in Section~\ref{sec:removing-regularisation}
would become messier and
the martingale arguments in Section~\ref{sec:Gronwall}
become a bit more difficult but fundamentally the same arguments should apply.

To show the convergence of the ALE model to the LPM,
we introduce two intermediate models which we will call the
auxiliary ALE, and the multinomial model.

The auxiliary ALE attaches particles exactly at the tips of its $k$ slits,
essentially replacing the ALE attachment measure,
which is supported on $\T$ at each step,
with a finite-dimensional process supported on $k$ atoms like the LPM.

We will then introduce the multinomial model
to replace the
regularised derivative in the ALE with the second derivative \emph{on} the boundary.
This is quite simple.
The most difficult and involved step is showing that
the multinomial model,
which is a discrete-time random growth model,
converges to the continuous-time deterministic LPM.
We essentially prove a strong law of large numbers
at each tip:
if we have attached a large number of slits by time $t$
and the probability of attaching at slit $j$
is $p$, then this is similar to a solution to Loewner's equation at time $t$
with a driving function that always gives weight $p$ to the tip of slit $j$.
It is complicated by the fact that the probabilities and weights
both change over time, as do the preimages of the tips.
Analysing the fluctuation of the weights at each tip
from the Laplacian path model's \eqref{lpm-weights}
is the most technical part of Section~\ref{sec:Gronwall},
involving controlling strong feedback in the dynamics.
Another important technical detail is that Loewner's equation is singular at the tips.

\section{Analytic tools}
\label{analytic-tools}

\begin{remark}
	Frequently throughout the paper,
	we will use a constant $A$
	which changes from line to line,
	having the properties that $0 < A < \infty$
	and that $A$ depends only on the parameters which we are holding fixed,
	such as $T$, $K_0$ and $\eta$ in Theorem~\ref{thm:ale-lpm}.
	
	We will occasionally note explicitly on which parameters $A$ depends,
	but it is always independent of the parameter $\cparam$
	which we allow to tend to $0$,
	and of the number of particles.
\end{remark}

The choice of a slit as our particle shape
is very convenient because the map and its derivative have explicit expressions:

\begin{proposition}
	\label{thm:f-expressions}
	Let $\cparam > 0$ and $f = f^{\cparam} : \Delta \to \Delta \setminus (1,1+d]$ as above.
	Let $\beta = \beta_\cparam$
	be the unique angle in $(0,\pi)$ such that $f(e^{i\beta}) = 1$.
	Then $\beta \sim 2\cparam^{1/2}$ for small $\cparam$,
	and for $w \in \Delta$,
	\begin{align}
		\label{f-explicit}
		f(z) = \frac{e^{\cparam}}{2w}(w+1) \left(w+1 + \sqrt{w^2 + 2(1-2e^{-\cparam})w + 1}\right) - 1
	\end{align}
	and
	\begin{align}
		\label{f-derivative}
		f'(w) = \frac{f(w)}{w} \frac{w-1}{(w-e^{i\beta})^{1/2}(w-e^{-i\beta})^{1/2}}.
	\end{align}
\end{proposition}
\begin{proof}
	See \cite[Equation (8) and Lemma 4]{stv-ale}.
\end{proof}

We can find useful bounds on $|f'(w)|$ using \eqref{f-derivative}:
\begin{corollary}
	\label{thm:f-prime-estimate} 
	There are universal constants $A_1, A_2 > 0$ such that
	for all $\cparam < 1$,
	for $w \in \Delta$,
	if $|w - e^{ i\beta}| \leq \frac{3}{4}\beta$,
	then
	\begin{align*}
		A_1 \frac{\beta^{1/2}}{|w - e^{i\beta}|^{1/2}}
		\leq
		|f'(w)|
		\leq
		A_2 \frac{\beta^{1/2}}{|w - e^{i\beta}|^{1/2}},
	\end{align*}
	and similarly if
	$|w - e^{-i\beta}| \leq \frac{3}{4}\beta$.
	
	Moreover, there is a third constant $A_3$
	such that if
	$\min \{ | w - e^{i\beta} |, | w - e^{-i\beta} | \}
	> \frac{3}{4}\beta$,
	then
	\[
	|f'(w)| \leq A_3.
	\]
\end{corollary}

Near $e^{\pm i \beta}$, $f$ has a non-linear effect on distances,
quantified in \cite[Lemma 12]{small-eta}:
\begin{lemma}
	\label{thm:distance-estimate}
	For $w \in \Delta$,
	for all $\cparam < 1$,
	if $| w - e^{i\beta} | \leq \beta / 2$,
	then
	\begin{align*}
		\begin{aligned}
			|f(w) - 1|
			=\ 
			&2(e^{\cparam} - 1)^{1/4}
			|w - e^{i\beta}|^{1/2}\\
			&\times
			\left(
			1 +
			O\left[
			\frac{|w - e^{i\beta}|}{\cparam^{1/2}}
			\vee
			\cparam^{1/4} |w - e^{i\beta}|^{1/2}
			\right]
			\right).
		\end{aligned}
	\end{align*}
\end{lemma}

One useful tool for analysing Loewner's equation is the \emph{backward equation}.
See \cite[Chapter 4]{lawler-conformal-book}\footnote{The backward equation appears in \cite{lawler-conformal-book} in the proof of Theorem 4.13.}.
\begin{definition}
	Fix $T > 0$. Let $\xi : [0,T] \to \R$ be a \cadlag function,
	and $(\varphi_t)_{t \in [0,T]}$
	the solution to Loewner's equation with driving function $\xi$.
	Then the \emph{backward equation}
	is the system of ordinary differential equations
	\begin{align}
		\label{eq:backwards-equation}
		u_0(z) = z, \quad \frac{\pd}{\pd t} u_t(z) = u_t(z) \frac{u_t(z) + e^{i\xi_{T-t}}}{u_t(z) - e^{i\xi_{T-t}}},\quad z \in \Delta,
	\end{align}
	for $t \in [0,T]$.
	Then $u_T = \varphi_T$,
	but it is not usually true that $u_t = \varphi_t$
	for $t < T$.
\end{definition}

\begin{remark}
	It is often easier to analyse $u_t$
	than $\varphi_t$
	since the former is governed by an ordinary rather than partial differential equation.
\end{remark}

\begin{remark}
	If the driving function $\xi$ corresponds to an aggregation process,
	so $\xi_t = \theta_{\rdown{t/\cparam}+1}$,
	then solving Loewner's (forward) equation gives
	$\varphi_{N\cparam} = f_1 \circ \dots \circ f_N$.
	Understanding $\varphi_{N\cparam}(z)$, $\varphi_{N\cparam}'(z)$, etc.\
	requires analysing $f_N(z)$,
	$f_{N-1}(f_N(z))$, etc.\
	For example, $\varphi_{N\cparam}'(z) = \prod_{n=1}^{N} f_n'((f_{n+1}\circ\dots\circ f_N)(z))$,
	but the location of $(f_{n+1}\circ\dots\circ f_N)(z)$
	is difficult to understand using the forward equation.
	However, if we solve the backward equation
	\eqref{eq:backwards-equation},
	we have $u_{n\cparam} = f_{N-n+1} \circ \dots \circ f_N$
	for all $n \leq N$,
	so we could write $\varphi_{N\cparam}'(z) = \prod_{n=1}^{N} f_n'(u_{(N-n)\cparam}(z))$.
\end{remark}

When keeping track of points on the circle,
it is also useful to have estimates on \emph{angular distortion}
away from the tip and bases of the slit:
\begin{lemma}
	\label{thm:angular-distortion}
	For any positive constant $L > 0$,
	there is a constant $A_{L} > 0$ such that
	if $e^{i\alpha}$ and $e^{i\alpha'}$ are each
	at a distance at least
	$L$ from 1
	then
	\begin{equation}
		\label{far-from-tip-derivative}
		|f'(e^{i\alpha})| \leq e^{A_L \cparam}
	\end{equation}
	and
	\begin{equation}
		\label{linear-angle-changes}
		e^{-A_L \cparam} | \alpha - \alpha' |
		\leq
		|
		\arg f(e^{i\alpha})
		-
		\arg f(e^{i\alpha'})
		|
		\leq
		e^{A_L \cparam} | \alpha - \alpha' |
	\end{equation}
	for all sufficiently small $\cparam$.
\end{lemma}

\begin{proof}
	The estimate \eqref{far-from-tip-derivative}
	follows by expanding \eqref{f-derivative}.
	When $|\theta| > \beta$,
	\cite[equation (37)]{stv-ale} says that
	\[
	1 + \cos(\mathrm{arg} f(e^{i\theta})) = (1 + \cos\theta)e^\cparam
	\]
	and this gives us \eqref{linear-angle-changes}.
\end{proof}

In \cite{stv-ale}, the results of which we extend here,
the authors developed a useful set of estimates on the solutions $u^0$ and $u^1$
to \eqref{eq:backwards-equation}
with respective driving functions $\xi^0$ and $\xi^1$.

\begin{lemma}[Lemma 11 of \cite{stv-ale}]
	\label{ucomp} 
	Suppose $z_0 \in {\Delta}$, $T>0$ and $\xi^0:(0,T] \to \R$ are given and let
	\[
	\Lambda_t = \int_0^t \frac{2 |u_s^0(z_0)|^2 \d s}{|(u_s^0)'(z_0)||u_s^0(z_0)e^{-i \xi^0_{T-s}}-1|^2}.
	\]
	There exists some absolute constant $A$ such that, for all $|z| > 1$  satisfying 
	\begin{align}
		\label{zassump}
		|z-z_0| &\leq A^{-1} \inf_{0 \leq t \leq T} \left ( \frac{|u_t^0(z_0)e^{-i \xi^0_{T-t}} -1|}{|(u_t^0)'(z_0)|} \wedge \left ( \int_0^t \frac{|(u_s^0)'(z_0)| }{ |u_s^0(z_0)e^{-i \xi^0_{T-s}} -1|^3} \d s \right )^{-1} \right ), 
	\end{align}
	we have, for all $0 \leq t \leq T$,
	\begin{align}
		\label{loewner-taylor}
		\left | \log \frac{u^0_t(z) - u^0_t(z_0)}{(z-z_0)(u^0_t)'(z_0)}\right | \leq A |z-z_0| \int_0^t \frac{|(u^0_s)'(z_0)|\d s}{|u_s^0(z_0)e^{-i \xi^0_{T-s}} -1|^3}  
	\end{align}
	(where we interpret the left hand side as being equal to $0$ if $z=z_0$) and
	\begin{align}
		\label{logubound}
		\left | \log \frac{(u^0_t)'(z)}{(u^0_t)'(z_0)} \right | \leq A |z-z_0| \int_0^t \frac{|(u^0_s)'(z_0)|\d s}{|u_s^0(z_0)e^{-i \xi^0_{T-s}} -1|^3} . 
	\end{align}
	Furthermore, $A$ can be chosen so that if, in addition, $\xi^1:(0,T] \to \R$ satisfies
	\begin{equation}
		\label{xiassump}
		\|\xi^1-\xi^0\|_T \leq \frac{1}{A} \inf_{0 \leq t \leq T} \left ( \frac{|u_t^0(z_0)e^{-i \xi^0_{T-t}} -1|}{|(u_t^0)'(z_0)|\Lambda_t+|u^0_t(z_0)|} \wedge \left ( \int_0^t \frac{\Lambda_s|(u_s^0)'(z_0)| + |u^0_s(z_0)|}{ |u_s^0(z_0)e^{-i \xi^0_{T-s}} -1|^3} \d s \right )^{-1} \right ),
	\end{equation}
	then, for all $0 \leq t \leq T$,
	\begin{equation}
		\left | u^1_t(z)-u^0_t(z) \right | \leq A|(u^0_t)'(z_0)| \|\xi^1 - \xi^0\|_T \Lambda_t  
		\label{uLambdabound}
	\end{equation}
	and
	\begin{align}
		\label{derivatives-bound}
		\left | \log \frac{(u^1_t)'(z)}{(u^0_t)'(z)} \right | \leq A \| \xi^1-\xi^0\|_T \int_0^t \frac{\Lambda_s|(u_s^0)'(z_0)| + |u^0_s(z_0)|}{ |u_s^0(z_0)e^{-i \xi^0_{T-s}} -1|^3} \d s. 
	\end{align}
\end{lemma}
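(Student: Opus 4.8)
The plan is to turn all four estimates into Gr\"onwall/bootstrap arguments for the backward \emph{ordinary} differential equation \eqref{eq:backwards-equation}. Write $G(u,\xi) = u\,\frac{u+e^{i\xi}}{u-e^{i\xi}}$ for the backward vector field, so that $\partial_t u_t(z) = G(u_t(z),\xi_{T-t})$ and, differentiating in $z$, $\partial_t u_t'(z) = \partial_u G(u_t(z),\xi_{T-t})\,u_t'(z)$. Three elementary identities do the work:
\begin{align*}
	G(a,\xi) - G(b,\xi) &= (a-b)\left(1 - \frac{2e^{2i\xi}}{(a-e^{i\xi})(b-e^{i\xi})}\right), \\
	\partial_u G(a,\xi) &= 1 - \frac{2e^{2i\xi}}{(a-e^{i\xi})^2}, \qquad
	\partial_\xi G(a,\xi) = \frac{2i\,a^2 e^{i\xi}}{(a-e^{i\xi})^2}.
\end{align*}
Since $|ue^{-i\xi}-1| = |u-e^{i\xi}|$, the denominators appearing in \eqref{zassump}--\eqref{derivatives-bound} are precisely these, and one checks that $\Lambda_t = \int_0^t |\partial_\xi G(u^0_s(z_0),\xi^0_{T-s})|\,/\,|(u^0_s)'(z_0)|\,\d s$; keeping this in mind explains the shape of the kernels below.

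First I would establish \eqref{loewner-taylor} and \eqref{logubound} for the single driving function $\xi^0$. Abbreviating $p_t = u^0_t(z)$, $q_t = u^0_t(z_0)$, $r_t = (u^0_t)'(z_0)$ and $\rho_t = |q_t - e^{i\xi^0_{T-t}}|$, the object to track is $W_t := \frac{p_t - q_t}{(z-z_0)\,r_t}$. Combining the first identity above with $\partial_t\log r_t = \partial_u G(q_t,\xi^0_{T-t})$, the constant terms cancel and the remainder is proportional to $p_t-q_t$, giving a \emph{quasilinear} equation of the form $\frac{\d}{\d t}\log W_t = W_t\cdot\frac{2e^{2i\xi^0_{T-t}}(z-z_0)\,r_t}{(p_t-e^{i\xi^0_{T-t}})(q_t-e^{i\xi^0_{T-t}})^2}$. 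Then I would run a continuity argument on the maximal interval on which $|W_t|\le 2$ and $|p_t - q_t|\le \tfrac12\rho_t$ (forcing $|p_t - e^{i\xi^0_{T-t}}|\ge \tfrac12\rho_t$): on that interval the right-hand side is bounded by $8|z-z_0|\,|r_t|\,\rho_t^{-3}$, so $|\log W_t|\le 8|z-z_0|\int_0^t |r_s|\,\rho_s^{-3}\,\d s$. The first branch of the infimum in \eqref{zassump} is exactly what keeps $|p_t-q_t|\approx|z-z_0|\,|r_t|$ below $\tfrac12\rho_t$, while the second branch keeps that integral (hence $|\log W_t|$) below $\log 2$; the bootstrap is then self-consistent, the interval is all of $[0,T]$, and \eqref{loewner-taylor} follows. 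Estimate \eqref{logubound} is the same computation applied to $\log\frac{(u^0_t)'(z)}{r_t}$, whose $t$-derivative is $\partial_u G(p_t,\xi^0_{T-t}) - \partial_u G(q_t,\xi^0_{T-t})$: once more this is $(p_t-q_t)$ times a kernel of order $\rho_t^{-3}$.

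Next I would treat the comparison bounds \eqref{uLambdabound} and \eqref{derivatives-bound}, assuming both \eqref{zassump} and \eqref{xiassump}. Put $v_t = u^1_t(z)$ and $\delta_t = v_t - p_t$, and split
\[
	\partial_t\delta_t = \bigl(G(v_t,\xi^1_{T-t}) - G(p_t,\xi^1_{T-t})\bigr) + \bigl(G(p_t,\xi^1_{T-t}) - G(p_t,\xi^0_{T-t})\bigr).
\]
By the first identity the initial bracket equals $\delta_t$ times $1 - 2e^{2i\xi^1_{T-t}}/[(v_t-e^{i\xi^1_{T-t}})(p_t-e^{i\xi^1_{T-t}})]$, which by \eqref{loewner-taylor}, \eqref{logubound} and the smallness of $\|\xi^1-\xi^0\|_T$ agrees with $\partial_t\log r_t = \partial_u G(q_t,\xi^0_{T-t})$ up to an error carrying a $\rho_t^{-3}$-kernel; the second bracket is $\int_{\xi^0_{T-t}}^{\xi^1_{T-t}}\partial_\xi G(p_t,\theta)\,\d\theta$, of size $O\bigl(\|\xi^1-\xi^0\|_T\,|q_t|^2\rho_t^{-2}\bigr)$. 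Using $r_t$ as an integrating factor, Duhamel's formula then gives $\delta_t \approx r_t\int_0^t\bigl(G(p_s,\xi^1_{T-s}) - G(p_s,\xi^0_{T-s})\bigr)/r_s\,\d s$, whose leading term is at most $\|\xi^1-\xi^0\|_T\int_0^t\frac{2|q_s|^2}{|r_s|\,\rho_s^2}\,\d s = \|\xi^1-\xi^0\|_T\,\Lambda_t$. A second continuity argument — on the set where $|\delta_t|\le 2A|r_t|\,\|\xi^1-\xi^0\|_T\,\Lambda_t$ and $v_t$ stays within $\tfrac12\rho_t$ of $q_t$ — then closes \eqref{uLambdabound}, with the accumulated error governed by the master integral $\int_0^t\frac{\Lambda_s|r_s|+|q_s|}{\rho_s^3}\,\d s$ that \eqref{xiassump} forces to be $O(1)$; the two branches of \eqref{xiassump} are exactly the conditions keeping, respectively, $v_t$ that close and that integral bounded. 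Finally \eqref{derivatives-bound} drops out by differentiating the $v$-equation in $z$ and repeating: $\partial_t\log\frac{(u^1_t)'(z)}{(u^0_t)'(z)} = \partial_u G(v_t,\xi^1_{T-t}) - \partial_u G(p_t,\xi^0_{T-t})$ splits into an $O(|\delta_t|)$ piece, controlled by \eqref{uLambdabound}, plus an $O(\|\xi^1-\xi^0\|_T)$ piece, each against a $\rho_s^{-3}$-kernel.

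The genuinely delicate part is the bookkeeping: a single absolute constant $A$ has to make all four bootstraps close at once, which forces one to bound the ``error'' in the $\delta_t$-coefficient --- the discrepancy between $1 - 2e^{2i\xi^1_{T-t}}/[(v_t-e^{i\xi^1_{T-t}})(p_t-e^{i\xi^1_{T-t}})]$ and $\partial_u G(q_t,\xi^0_{T-t})$ --- purely in terms of quantities already controlled by \eqref{loewner-taylor} and \eqref{logubound}, and then to feed the resulting linear integral inequality back through Gr\"onwall without degrading $A$. A minor technical point is that $\xi^0$ and $\xi^1$ are only \cadlag functions, so all of the above should be phrased via the integral (Duhamel) forms of the ODEs rather than pointwise $t$-derivatives; this causes no real difficulty, since $G$ is locally Lipschitz in $u$ away from $e^{i\xi}$ and the continuity-method sets are relatively open. (This recovers Lemma~11 of \cite{stv-ale}.)
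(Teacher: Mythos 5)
The paper does not prove this lemma; it is quoted verbatim as Lemma~11 of \cite{stv-ale}, so there is no ``paper's own proof'' to compare against. Assessed on its own terms, your outline is the natural and, so far as one can verify without fully carrying out the bookkeeping, correct route: the backward Loewner flow is an ODE, the identities for $G(a,\xi)-G(b,\xi)$, $\partial_u G$, $\partial_\xi G$ are all right, the observation that $\Lambda_t=\int_0^t|\partial_\xi G(u^0_s(z_0),\xi^0_{T-s})|/|(u^0_s)'(z_0)|\,\d s$ is exactly what motivates the shape of the bounds, and the Duhamel/Gr\"onwall bootstrap with $r_t$ as integrating factor is the standard comparison machinery. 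The continuity-method sets you propose (keeping $|W_t|\le 2$ and $|p_t-q_t|\le\tfrac12\rho_t$, then in the second stage keeping $|\delta_t|\lesssim|r_t|\,\|\xi^1-\xi^0\|\,\Lambda_t$ and $v_t$ within $\tfrac12\rho_t$ of $q_t$) do close: the two branches of each of \eqref{zassump} and \eqref{xiassump} supply precisely the smallness needed, with $\Lambda_s\le\Lambda_t$ used to extract a factor of $\Lambda_t$ and reduce the quadratic terms to $A^{-1}\|\xi^1-\xi^0\|\Lambda_t$. Your identification of the ``delicate part'' --- controlling the discrepancy between the coefficient of $\delta_t$ and $\partial_u G(q_t,\xi^0)$ purely via quantities already bounded by \eqref{loewner-taylor}--\eqref{logubound} --- is spot on, and the \cadlag caveat (pass to the integral form) is handled correctly. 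Since the argument in \cite{stv-ale} is also a Gr\"onwall-type analysis of the backward flow, this is essentially the same approach as the cited source, executed cleanly.
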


\section{Removing the regularisation}
\label{sec:removing-regularisation}

In this section we reduce the ALE,
which has an attachment density supported on $\T$,
to the ``multinomial model,''
which, like the LPM, grows only at the tips,
and the probability of attaching to a tip
at time $t$ is determined by the second derivative
of the map corresponding to the cluster.

The proofs in this section mostly involve
careful calculations
to estimate $|\Phi_n'|$ around relevant points.

\subsection{ALE to auxiliary model}
\label{sec:ale-to-aux}

First we define the auxiliary model,
which is a version of the ALE in which growth occurs exactly at the tips.
This first step is a type of dimension reduction,
in which we restrict possible attachment points from all of $\T$
to only $k$ points.

\begin{definition}
	\label{def:aux-model}
	Let $K_0$ be of the same form as above, and $\Phi_0^{*} = \Phi_0^{\mathrm{LPM}}$.
	We will choose an attachment point $\theta$ as we do for the ALE,
	but before attaching a particle at $\theta$,
	we rotate the entire cluster so that $\theta$ is the preimage
	of the tip of a slit.
	We define the map $\Phi_n^*$ of the auxiliary model inductively:
	At step $n$ we still have a configuration of $k$ curves
	(which are not necessarily line segments).
	Let the preimage under $\Phi_n^*$ of the tip of the $j$th curve
	be $\exp(i\preimageaux^j_n)$.
	Choose $\theta_{n+1}^*$ according to the conditional density
	\begin{align*}
		h(\theta \,|\, \theta_1, \dots, \theta_n) 
		=
		\frac{1}{Z_n^*} | (\Phi_t^*)'(e^{\sigma + i\theta}) |^{-\eta},
	\end{align*}
	and if $j_n = \mathop{\mathrm{argmin}}_j 
	|e^{i\theta_{n+1}^*} - e^{i\preimageaux^j_n}|$
	(which almost surely uniquely exists),
	let $\hat\theta_{n+1}^* = \preimageaux^{j_n}_n$,
	and $\delta_{n+1} = \theta_{n+1}^* - \hat\theta_{n+1}^*$.
	
	Then if $R_{\theta}(z) = e^{i\theta}z$,
	set
	\begin{align*}
		\Phi_{n+1}^* = R_{\delta_{n+1}} \circ \Phi_n^* \circ R_{-\delta_{n+1}} \circ f_{\theta_{n+1}^*}.
	\end{align*}
\end{definition}

\begin{remark}
	This procedure of choosing the $(n+1)$th angle
	and attaching a slit to a rotated version of the $n$th cluster
	is illustrated in \autoref{fig:auxiliary-diagram}.
\end{remark}

\begin{figure}[h!]
	\centering
	\begin{tikzpicture}
		\draw node (empty-circle) at (0,0) {\fbox{\includegraphics[width=0.3\textwidth,trim=227 162 195 147, clip]{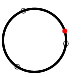}}};
		\draw node at ($(empty-circle)+(1.7,0.5)$) {$\theta_{6}^*$};
		\draw node at ($(empty-circle)+(1.8,-0.3)$) {$\bar \phi_5^1$};
		\draw node at ($(empty-circle)+(-1.3,1.8)$) {$\bar \phi_5^2$};
		\draw node at ($(empty-circle)+(-1.7,-1.8)$) {$\bar \phi_5^3$};
		\draw node (Kn) at (7.5,0) {\fbox{\includegraphics[width=0.3\textwidth,trim=210 132 174 115, clip]{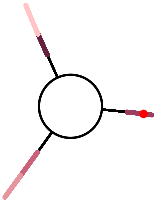}}};
		\draw node (new-slit) at (0,-5.5) {\fbox{\includegraphics[width=0.3\textwidth,trim=227 162 187 147, clip]{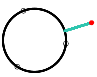}}};
		\draw node at ($(new-slit)+(1.2,-0.3)$) {$\bar \phi_5^1$};
		\draw node at ($(new-slit)+(-1.5,1.5)$) {$\bar \phi_5^2$};
		\draw node at ($(new-slit)+(-1.8,-1.5)$) {$\bar \phi_5^3$};
		\draw node (Knplusone) at (7.5,-5.5) {\fbox{\includegraphics[width=0.3\textwidth,trim=175 125 182 120, clip]{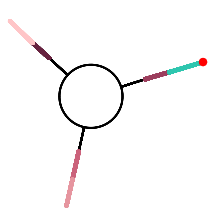}}};
		\draw node (rotated-slit) at (0,-11) {\fbox{\includegraphics[width=0.3\textwidth,trim=227 162 187 147, clip]{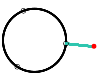}}};
		\draw node at ($(rotated-slit)+(1.1,0.0)$) {$\bar \phi_5^1$};
		\draw node at ($(rotated-slit)+(-1.5,1.5)$) {$\bar \phi_5^2$};
		\draw node at ($(rotated-slit)+(-1.8,-1.5)$) {$\bar \phi_5^3$};
		\draw node (rotated-Knplusone) at (7.5,-11) {\fbox{\includegraphics[width=0.3\textwidth,trim=164 132 198 115, clip]{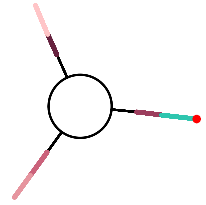}}};
		\draw[->] (empty-circle) -- node[anchor=south] {$\Phi_5^*$} (Kn);
		\draw[->] (empty-circle) -- node[anchor=west] {$f_{\theta_{6}^*}$} (new-slit);
		\draw[->] (empty-circle) -- node[above=0.1cm] {$\Phi_{6}^*$} (Knplusone);
		\draw[->] (new-slit) -- node[anchor=west] {$R_{-\delta_{6}}$} (rotated-slit);
		\draw[->] (rotated-slit) -- node[anchor=south] {$\Phi_5^*$} (rotated-Knplusone);
		\draw[->] (rotated-Knplusone) -- node[anchor=west] {$R_{\delta_{6}}$} (Knplusone);
		\draw[->] (new-slit) -- node[anchor=south] {$R_{\delta_{6}} \circ \Phi_5^* \circ R_{-\delta_{6}}$} (Knplusone);
	\end{tikzpicture}
	\caption{%
		\label{fig:auxiliary-diagram}One step
		in the construction of the auxiliary model.
		The large dot at one tip represents the image of
		$\exp(i \theta_{6}^*)$ under each map.
		Comparing $K_5$ and $K_6$,
		we can see that both have three curves
		but the attachment points have been rotated
		from one cluster to the next.
	}
\end{figure}

\begin{remark}
	Since $f_\theta \circ R_{-\delta} = R_{-\delta} \circ f_{\theta+\delta}$,
	we can write $\Phiaux_n$ in two ways:
	\begin{align}
		\nonumber
		\Phiaux_n &= R_{\delta_1 + \dots + \delta_n} \circ \Phi_0 \circ R_{-\delta_1} \circ f_{\theta_1^*} \circ R_{-\delta_2} \circ \dots \circ R_{-\delta_n} \circ f_{\theta_n^*}\\
		\label{eq:aux-rotation}
		&=
		R_{\delta_1 + \dots + \delta_n} \circ \Phi_0 \circ R_{-(\delta_1 + \dots + \delta_n)} \circ
		f_{\theta_1^* + \delta_2 + \delta_3 + \dots + \delta_n}
		\circ f_{\theta_2^* + \delta_3 + \dots + \delta_n}
		\circ \dots \circ
		f_{\theta_n^*},
	\end{align}
	and this latter expression is also equal to
	\begin{align*}
		R_{\delta_1 + \dots + \delta_n} \circ \Phi_0 \circ R_{-(\delta_1 + \dots + \delta_n)}
		\circ
		f_{\hat \theta_1^* + \delta_1 + \dots + \delta_n}
		\circ
		f_{\hat \theta_2^* + \delta_2 + \dots + \delta_n}
		\circ
		f_{\hat \theta_n^* + \delta_n}.
	\end{align*}
\end{remark}

\begin{definition}
	\label{def:coupling-failure}
	We denote the preimages of the $j$th tip by
	$e^{i\phi^j_n}$ in the ALE model,
	and by $e^{i\bar \phi^j_n}$ in the
	auxiliary model.
	Define
	\begin{align*}
		j_n   &:= \mathop{\mathrm{argmin}}_j | e^{i\theta_n} - e^{i\phi^j_n} |,\\
		j_n^* &:= \mathop{\mathrm{argmin}}_j | e^{i\theta_n^*} - e^{i\bar\phi^j_n} |.
	\end{align*}
	If we construct a version of each of the ALE and auxiliary models
	on a common probability space,
	then we can define the stopping time
	\begin{align*}
		\couplingfailure := \min\{ n \geq 1 : j_n \not= j_n^* \}.
	\end{align*}
\end{definition}

To show that the ALE and auxiliary models are close,
we will couple the models so that at each step
the probability $j_n = j_n^*$ is close to 1,
and show that with high probability the ALE lands very near a tip.

First we will examine the regions near the tips in each model,
and find that the probability of landing near each tip is similar.
Then we will prove in each case that the probability of not landing
near any tip is $o(1)$,
generalising the main result of \cite{stv-ale},
with a similar argument.

\begin{definition}
	\label{def:tau-D}
	The coupling relies on both models attaching near tips,
	so we define another stopping time which triggers
	when a particle in either model is too far from its nearest tip.
	Let
	\begin{align*}
		\tau_D = \inf\{ n \geq 1 : |\theta_n^* - \hat\theta_n^*| > D \text{ or } \min_{j} |\theta_n - \phi^j_n| > D \},
	\end{align*}
	where $D = \sqrt{\sigma}$.
\end{definition}

We will establish a bound on $\P[ \tau_D < n ]$
which does not depend on the exact nature of the coupling,
and then use this bound to show that a coupling exists
under which $\P[ \couplingfailure \leq \rdown{T/\cparam} ]$
is small.

\begin{remark}
	The ALE and auxiliary models will have a common weak limit if
	with high probability $\tau_D \wedge \couplingfailure > T/\cparam$.
	This event means that every particle in the ALE model
	has been attached within $D$ of a ``main'' tip,
	and every particle in the auxiliary model has chosen the same slit
	to attach to as the ALE model.
	An example of the coupling of these two models
	is shown in \autoref{fig:ale-aux-coupling}.
\end{remark}

\begin{theorem}
	\label{thm:ale-aux-global}
	There exists a constant $A = A(K_0, k, \eta, T)$
	such that
	the ALE and auxiliary models can be constructed on a common probability
	space and on this space
	\begin{align*}
		\P(\tau_D \wedge \couplingfailure \leq \rdown{T/\cparam}) \leq A \cparam^{1/2}
	\end{align*}
	provided $\sigma < \cparam^{\gamma}$
	for $\gamma = \frac{2(\eta+2)}{\eta-1} \vee \frac{5\eta + 10}{2\eta} \vee 8$.
\end{theorem}

\begin{figure}
	\centering
	\includegraphics[width=0.49\linewidth,trim = 140 50 120 80, clip]{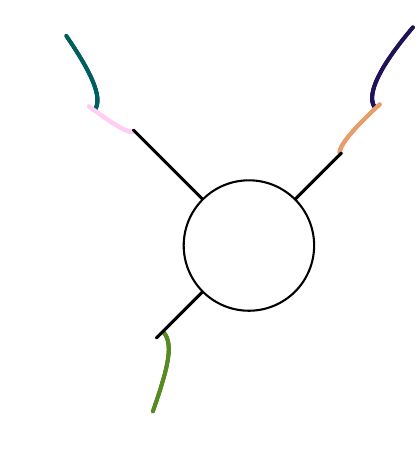}
	\includegraphics[width=0.49\linewidth,trim = 140 50 120 80, clip]{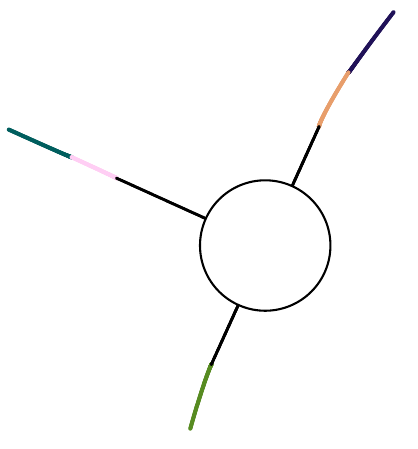}
	\caption{\label{fig:ale-aux-coupling}
		Coupled ALE (left) and auxiliary (right) models
		at a time $n < \tau_D \wedge \tau_{\mathrm{coupling}}$
		started from a three-arm configuration.
		Note the slight rotation of three initial arms
		in the auxiliary model,
		and that the curves are repelled from each other.
	}
\end{figure}

\begin{remark}
	The three terms determining $\gamma$ come from three separate requirements
	in the proof of this section's result:
	we require $\sigma < \cparam^8$
	so that the derivatives of each model
	look similar near the ``main'' tips,
	as in Proposition~\ref{thm:coupling-induction}.
	We require that $\sigma < \cparam^{\frac{5\eta+10}{2\eta}}$
	so that ``old'' particles do not contribute in the ALE model,
	and we require that
	$\sigma < \cparam^{\frac{2(\eta+2)}{\eta-1}}$
	so that the measures are concentrated very tightly around the tip
	of each particle,
	with each particle attached within distance $D$
	of a main tip.
	
	All three terms and their maximum are plotted in \autoref{fig:gamma}.
	
	In fact,
	the $\frac{5\eta + 10}{2\eta}$ term is redundant
	since it is always less than 8,
	but we retain it because the assumption
	$\gamma \geq \frac{5\eta + 10}{2\eta}$
	makes some of the arguments clearer,
	as it corresponds to one of the exponents
	in Proposition~\ref{thm:ale-to-aux-inductive}.
\end{remark}

\begin{remark}
	It would be possible to use a slightly smaller $\gamma$ if we
	replaced the $A\cparam^{1/2}$ with a weaker $o(1)$ upper bound
	in Theorem~\ref{thm:ale-aux-global}
	(which is all we require to prove Theorem~\ref{thm:ale-lpm})
	but we have left it in its current form to avoid complicating
	the proofs further.
\end{remark}

\begin{figure}
	\centering
	\includegraphics[width=\linewidth, trim = 10 10 10 10, clip]{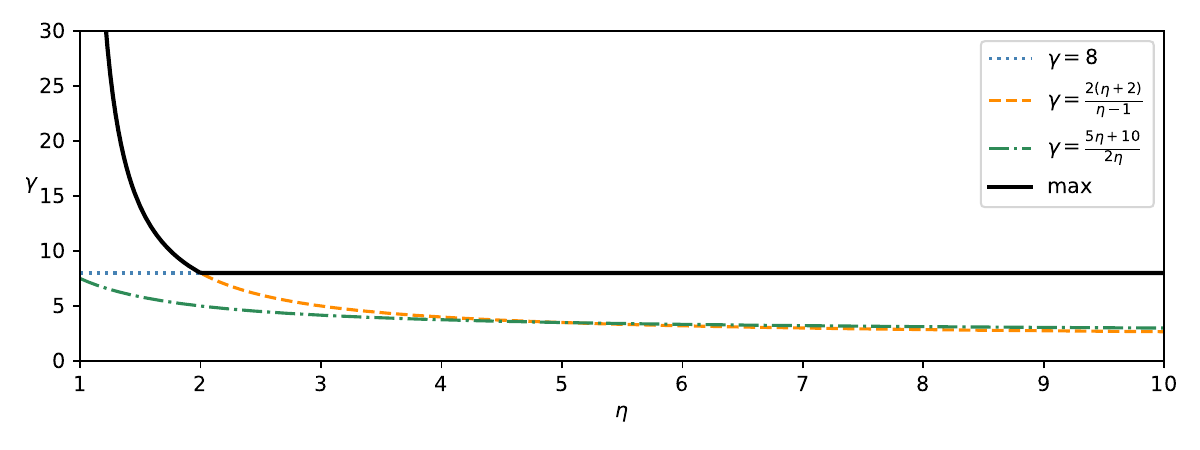}
	\caption{\label{fig:gamma}A plot of the exponent $\gamma$ used to control
		the regularisation in Theorem~\ref{thm:ale-aux-global}
		as a function of the parameter $\eta$.
		Each of the three terms determining $\gamma$ is plotted separately,
		with their maximum shown as a solid black curve.
	}
\end{figure}

The proof of Theorem~\ref{thm:ale-aux-global} will follow
from a proposition controlling $\tau_D$ and another
controlling $\tau_D \wedge \couplingfailure$,
both of which we state now.

\begin{proposition}
	\label{thm:ale-to-aux-inductive}
	For any $n < N$,
	\begin{align*}
		\P[ n+1 = \tau_D \,|\, n < \tau_D ]
		\leq
		A \cparam^{-\eta} \sigma^{\frac{\eta-1}{2}}
		+
		A \cparam^{-2\eta}\sigma^{\eta-1}
		+
		A \cparam^{-\frac{5\eta}{4} - \frac{1}{2}} \sigma^{\eta/2}.
	\end{align*}
	If $\sigma < \cparam^{\gamma}$
	where $\gamma
	=
	\frac{2(\eta+2)}{\eta-1} \vee \frac{5\eta + 10}{2\eta} \vee 8$,
	then
	this implies 
	$\P[ n+1 = \tau_D \,|\, n < \tau_D ] \leq A \cparam^2$
	and so $\P[ \tau_D \leq \rdown{T/\cparam} ] \leq AT\cparam$.
\end{proposition}

\begin{proposition}
	\label{thm:coupling-induction}
	Assume that $\sigma < \cparam^8$.
	We can construct the coupling of the ALE and auxiliary models
	in such a way that for all $n \leq T/\cparam$,
	on the event $\{n < \tau_D \wedge \couplingfailure\}$,
	the conditional probability that $j_{n+1} \not= \bar j_{n+1}$
	or $n+1 = \tau_D$ is almost surely bounded by
	\begin{align*}
		ATk\cparam^{3/2} + \P\left[ \delta_{n+1} > D \,|\, n < \tau_D \right] + \P\left[ \min_j |\theta_{n+1} - \phi^j_n | > D \,\bigg|\, n < \tau_D \right].
	\end{align*}
\end{proposition}

\begin{proof}[Proof of Theorem~\ref{thm:ale-aux-global}]
	Note
	\[
	\P(\tau_D \wedge \couplingfailure \leq \rdown{T/\cparam})
	\leq
	\P(\tau_D \leq \rdown{T/\cparam})
	+ \P(\couplingfailure \leq \rdown{T/\cparam} \,|\, \tau_D > \rdown{T/\cparam}).
	\]
	By Proposition~\ref{thm:ale-to-aux-inductive},
	the first term in the sum above is less than $A \cparam$
	under any coupling.
	Then using Proposition~\ref{thm:coupling-induction}
	we can construct a coupling such that
	for any $n \leq \rdown{T/\cparam}$,
	\begin{align*}
		\P( \couplingfailure = n+1 \,|\, n < \couplingfailure \text{ and } \tau_D > \rdown{T/\cparam} )
		&\leq
		A\cparam^{3/2} + 2 \P( n+1 = \tau_D \,|\, n < \tau_D ),
	\end{align*}
	and the second term is bounded by $A\cparam^2$.
	Therefore, summing over $n < \rdown{T/\cparam}$,
	\[
	\P( \couplingfailure \leq \rdown{T/\cparam} \,|\, \tau_D > \rdown{T/\cparam} )
	\leq A\cparam^{1/2}
	\]
	as required.
\end{proof}

Now we have to prove Propositions~\ref{thm:ale-to-aux-inductive}
and \ref{thm:coupling-induction}.
We begin with Proposition~\ref{thm:coupling-induction}
as its proof is somewhat simpler.

\begin{proof}[Proof of Proposition~\ref{thm:coupling-induction}]
	The result follows simply if we can prove the claim:
	for $n \leq T/\cparam$, on the event $\{n < \tau_D \wedge \couplingfailure\}$,
	\begin{align*}
		\sup_{|\theta| < \cparam^2}
		\left| \log\frac{
			(\Phi_n^{\mathrm{ALE}})'(e^{\sigma + i\theta}e^{i\phi^j_n})
		}{
			(\Phi_n^*)'(e^{\sigma + i\theta}e^{i\bar\phi^j_n})
		}		
		\right|
		\leq
		AT\cparam^{3/2}
	\end{align*}
	almost surely.
	
	Denote the preimages under $\Phiale_n$ of the $k$ tips 
	in the ALE model by $\preimageale^1_n, \dots, \preimageale^k_n$.
	For each $j \in \{1,2,\dots,k\}$ denote the subsequence of
	$(\thetaale_\ell)_{\ell = 1}^n$
	consisting of the times at which a particle is attached to
	the $j$th slit by
	\[
	\thetaale_{n_j(1)}, \thetaale_{n_j(2)}, \dots, \thetaale_{n_j(N_j)}.
	\]
	Near $\preimageale_n^j$ we can decompose $\Phiale_n$ as
	\begin{equation}
		\label{ale-decomposition}
		\Phiale_n
		=
		\Phiale_0
		\circ
		\Psi_{0}^j
		\circ
		\fale{n_j(1)} \circ \Psi^j_1
		\circ
		\fale{n_j(2)} \circ \Psi^j_2
		\circ
		\dots
		\circ
		\fale{n_j(N_j)} \circ \Psi^j_{N_j},
	\end{equation}
	where for $0 < \ell < N_j$,
	$\Psi^j_\ell
	= \fale{n_j(\ell)+1} \circ \fale{n_j(\ell)+2} \circ \dots \circ \fale{n_j(\ell+1)-1}
	= \left(\Phiale_{n_j(\ell)}\right)^{-1} \circ \Phiale_{n_j(\ell+1)-1}$,
	the map which attaches every particle landing at slits \emph{other than}
	the $j$th between the $\ell$th and $(\ell+1)$th
	time a particle lands on the $j$th slit.
	The two other maps are
	$\Psi^j_{0} = \left(\Phiale_{0}\right)^{-1} \circ \Phiale_{n_j(1)-1}$
	and
	$\Psi^j_{N_j} = \left(\Phiale_{n_j(N_j)}\right)^{-1} \circ \Phiale_{n}$,
	which correspond to the particles before the first attachment
	and after the last attachment to the $j$th slit, respectively.
	Note that any of these $\Psi^j_\ell$ maps may be the identity map on $\Delta$ if $n_j(\ell+1) = n_j(\ell) + 1$.
	The important common feature of all these $\Psi^j_\ell$ maps
	is that their derivatives have no poles or zeroes near the $j$th slit.
	
	Since the event we have conditioned on implies $n < \couplingfailure$,
	the subsequence of $(\thetaaux_\ell)_{\ell = 1}^{n}$
	consisting of times when a particle was attached to the $j$th slit is
	\[
	\thetaaux_{n_j(1)}, \thetaaux_{n_j(2)}, \dots, \thetaaux_{n_j(N_j)}
	\]
	with the same $n_j$ as above.
	There is a similar decomposition to \eqref{ale-decomposition},
	complicated only slightly by the cluster rotation.
	Let $\Delta_{l}^{n} := \delta_l+\delta_{k+1}+\dots+\delta_n$.
	Recall that $\thetaaux_l = \hat\theta^*_l + \delta_l$. Then
	\begin{align*}
		\Phiaux_n
		&=
		R_{\Delta_1^n}
		\circ
		\Phiaux_0
		\circ
		R_{-\Delta_{1}^{n}}\circ\\
		&\phantom{==}\circ
		\Psiaux^j_0
		\circ
		\faux{\hat\theta^*_{n_j(1)} + \Delta_{n_j(1)}^{n}} \circ \Psiaux^j_1
		\circ \dots \circ
		\faux{\hat\theta^*_{n_j(N_j)} + \Delta_{n_j(N_j)}^{n}} \circ \Psiaux^j_{N_j},
	\end{align*}
	where $\Psiaux^j_\ell
	=
	\faux{\hat\theta^*_{n_j(\ell)+1}+\Delta_{n_j(\ell)+1}^n}
	\circ \dots \circ
	\faux{\hat\theta^*_{n_j(\ell+1)-1}+\Delta_{n_j(\ell+1)-1}^{n}}$
	for $0 < \ell < N_j$ and the two end-cases
	are also defined similarly to $\Psi^j_0$ and $\Psi^j_{N_j}$.\\
	
	Now fix $\theta \in (-\cparam^2,\cparam^2)$
	and we will compare
	the densities
	$|(\Phiale_n)'(e^{\sigma + i \theta}e^{i\preimageale^j_n})|^{-\eta}$
	and
	$|(\Phiaux_n)'(e^{\sigma + i \theta}e^{i\preimageaux^j_n})|^{-\eta}$
	when $N_j > 0$.
	We omit the simpler case $N_j = 0$.
	
	First we will bound
	\[
	\left|
	\log \frac{
		(\Psi^j_{N_j})'(e^{\sigma + i \theta}e^{i\preimageale^j_n})
	}{
		(\Psiaux^j_{N_j})'(e^{\sigma + i \theta}e^{i\preimageaux^j_n})
	}
	\right|.
	\]
	The two maps $\Psi^j_{N_j}$ and $\Psiaux^j_{N_j}$
	are solutions to Loewner's equation with driving functions
	\[
	\xi_t = \thetaale_{\rdown{t/\cparam}+n_j(N_j)+1},
	\quad
	\overline{\xi}_t = \hat\theta^*_{\rdown{t/\cparam}+n_j(N_j)+1}
	+ \Delta_{\rdown{t/\cparam}+n_j(N_j)+1}^n
	\]
	respectively, for $t \in [0, (n-n_j(N_j))\cparam)$.
	Remark 3.7 of \cite{selander-thesis} notes that the points
	$e^{i\bar\phi^j_n}$ are repelled from each other on the circle.
	We therefore know that there is an $L > 0$
	depending only on the initial conditions and $T$
	such that 
	$e^{\sigma+i\theta}e^{i\preimageale^j_n}$
	and $e^{\sigma + i \theta}e^{i\preimageaux^j_n}$ keep at least a distance
	$L$ away from $\xi_t$ and $\overline{\xi}_t$ respectively.
	We will apply Lemma~\ref{ucomp},
	so note $\Lambda_t \asymp t$,
	condition \eqref{xiassump} is met for sufficiently small $\cparam$,
	and the bound on the right of \eqref{zassump} is $\asymp 1$.
	
	We know
	$e^{i \preimageale^j_n}
	=
	\left( \Psi^j_{N_j} \right)^{-1}(
	\thetaale_{n_j(N_j)}
	)$
	and
	$e^{i \preimageaux^j_n}
	=
	\left( \Psiaux^j_{N_j} \right)^{-1}(
	\hat\theta^*_{n_j(N_j)} + \Delta_{n_j(N_j)}^n
	)$,
	and so applying Lemma \ref{ucomp} gives us, since $(n-n_j(N_j))\cparam \leq T$,
	\begin{align*}
		\left|
		\log
		\frac{
			(\Psi^j_{N_j})'(e^{\sigma + i\theta} e^{i\preimageale_n^j})
		}{
			(\Psiaux^j_{N_j})'(e^{\sigma + i\theta} e^{i\preimageaux_n^j})
		}
		\right|
		&\leq
		A_L T \left(
		\left| \preimageale_n^j - \preimageaux_n^j \right|
		+
		|\delta_1| + \dots + |\delta_{n-1}|
		\right),
	\end{align*}
	and a further calculation using explicit estimates for $f^{-1}$
	similar to \eqref{linear-angle-changes}
	tells us
	\[
	|\preimageale_n^j - \preimageaux_n^j| \leq A_L (|\delta_1| + \dots + |\delta_n|).
	\]
	Hence, with a constant $A$ depending on $A_L$ and $T$,
	\begin{align}
		\label{non-singular-ratio}
		\left|
		\log
		\frac{
			(\Psi^j_{N_j})'(e^{\sigma + i\theta} e^{i\preimageale_n^j})
		}{
			(\Psiaux^j_{N_j})'(e^{\sigma + i\theta} e^{i\preimageaux_n^j})
		}
		\right|
		&\leq
		A \left(
		|\delta_1| + \dots + |\delta_{n}|
		\right).
	\end{align}
	The next two terms in the expansions of each derivative
	are the two largest,
	\begin{align}
		\label{biggest-terms}
		(\fale{n_j(N_j)})' \left( \Psi^j_{N_j}(e^{\sigma + i\theta} e^{i\preimageale_n^j})\right)
		\quad\text{and}\quad
		(\faux{\hat\theta^*_{n_j(N_j)} + \Delta_{n_j(N_j)}^{n}})' \left( \Psiaux^j_{N_j}(e^{\sigma + i\theta} e^{i\preimageaux_n^j})\right).
	\end{align}
	It is clear from \eqref{f-derivative}
	that, to first order, the size of $|f'(z)|$ depends on $|z-1|$.
	This means that to find the size of the two derivatives above,
	we are most interested in
	\[
	\left| \Psi^j_{N_j}(e^{\sigma + i\theta} e^{i\preimageale_n^j}) - e^{i\thetaale_{n_j(N_j)}}\right|
	\quad\text{and}\quad
	\left|
	\Psiaux^j_{N_j}(e^{\sigma + i\theta} e^{i\preimageaux_n^j})
	-
	e^{i(\hat\theta^*_{n_j(N_j)} + \Delta_{n_j(N_j)}^n) }
	\right|.
	\]
	If we apply \eqref{loewner-taylor}, the first of these is
	\begin{align}
		\nonumber
		\left|
		\Psi^j_{N_j}(e^{\sigma + i\theta} e^{i\preimageale_n^j}) - e^{i\thetaale_{n_j(N_j)}}
		\right|
		&=
		\left|
		\Psi^j_{N_j}(e^{\sigma + i\theta} e^{i\preimageale_n^j})
		-
		\Psi^j_{N_j}(e^{i\preimageale_n^j})
		\right|\\
		\nonumber
		&=
		\left|
		(e^{\sigma+i\theta}-1)e^{i\preimageale_n^j} (\Psi^j_{N_j})'(e^{i\preimageale_n^j})
		\right|
		\left[
		1 + O\left( |e^{\sigma+i\theta}-1| \right)
		\right]\\
		&=
		\label{distancefromtip-ale}
		|e^{\sigma + i\theta} - 1| \left|(\Psi^j_{N_j})'(e^{i\preimageale_n^j})\right|
		\left[
		1 + O\left( |e^{\sigma+i\theta}-1| \right)
		\right]
	\end{align}
	and similarly the second term is
	\begin{align}
		\label{distancefromtip-aux}
		|e^{\sigma + i\theta} - 1|
		\left|(\Psiaux^j_{N_j})'(e^{i\preimageaux_n^j})\right|
		\left[
		1 + O\left( |e^{\sigma+i\theta}-1| \right)
		\right].
	\end{align}
	Our previous bounds on $\left| \log \frac{
		(\Psi^j_{N_j})'(e^{\sigma + i\theta} e^{i\preimageale_n^j})
	}{
		(\Psiaux^j_{N_j})'(e^{\sigma + i\theta} e^{i\preimageaux_n^j})
	}\right|$
	apply equally to $\left| \log \frac{
		(\Psi^j_{N_j})'(e^{i\preimageale_n^j})
	}{
		(\Psiaux^j_{N_j})'(e^{i\preimageaux_n^j})
	}\right|$,
	so \eqref{f-derivative} allows us to directly compare
	the derivatives in \eqref{biggest-terms}.
	
	Firstly, a calculation using the explicit form of $f(z)$
	in \eqref{f-explicit}
	shows that if $|z-1| < \cparam^{1/2}$ then
	$f(z) = f(1) + O(|z-1|) = 1 + d + O(|z-1|)$.
	The next estimate $z = 1 + O(|z-1|)$ is obvious.
	Putting this estimate into the denominator
	of the second fraction in \eqref{f-derivative},
	we get $|z - e^{\pm i \beta}|^{1/2}
	= |1 - e^{i\beta}|^{1/2} (1 + O\left(\frac{|z-1|}{\cparam^{1/2}}\right) )$.
	Thus for $z$ close to 1 the behaviour of $|f'(z)|$ mainly depends on $|z-1|$,
	and we can write
	\begin{align*}
		|f'(z)| = \frac{1+d(\cparam)}{|1-e^{i\beta}|}\, |z-1|
		\left(
		1 + O(\cparam^{-1/2} |z-1|)
		\right).
	\end{align*}
	
	Therefore
	\begin{align}
		\nonumber
		\left|
		\log
		\frac{
			(\fale{n_j(N_j)})'\left(
			\Psi^j_{N_j}(e^{\sigma + i\theta} e^{i\preimageale_n^j})
			\right)
		}{
			(\faux{\hat\theta^*_{n_j(N_j)} + \Delta_{n_j(N_j)}^{n}})'\left(
			\Psiaux^j_{N_j}(e^{\sigma + i\theta} e^{i\preimageaux_n^j})
			\right)
		}
		\right|
		&\leq
		\left|
		\log
		\frac{
			\left|(\Psi^j_{N_j})'(e^{i\preimageale_n^j})\right|
		}{
			\left|(\Psiaux^j_{N_j})'(e^{i\preimageaux_n^j})\right|
		}
		\right|
		+
		A \frac{|e^{\sigma + i\theta} - 1|}{\cparam^{1/2}} \\
		\label{tip-comparison}
		&\leq
		A \left(
		|\delta_1| + \dots + |\delta_{n}|
		+
		\frac{|e^{\sigma + i\theta} - 1|}{\cparam^{1/2}}
		\right).
	\end{align}
	
	Now as $|\fale{n_j(N_j)}(\Psi_{N_j}(e^{\sigma + i\theta}e^{i\preimageale_{n}^{j}}))|
	\geq 1 + \cparam^{1/2}$
	and
	$|\faux{n_j(N_j)}(\Psiaux_{N_j}(e^{\sigma + i\theta}e^{i\preimageaux_{n}^{j}}))|
	\geq 1 + \cparam^{1/2}$,
	we can apply Lemma \ref{ucomp}
	to compare the two remaining derivatives
	\[
	\left|
	\left(
	\Phiale_0 \circ	\Psi_{0}^j \circ \fale{n_j(1)} \circ \dots \circ \Psi_{N_j-1}^j
	\right)'\left(
	\fale{n_j(N_j)}(\Psi_{N_j}(e^{\sigma + i\theta}e^{i\preimageale_{n}^{j}}))
	\right)
	\right|
	\]
	and
	\begin{align*}
		\Bigg|
		\left(
		R_{\Delta_1^n}
		\circ
		\Phiaux_0
		\circ
		R_{-\Delta_1^n}
		\circ
		\Psiaux^j_0
		\circ
		\faux{\hat\theta^*_{n_j(1)} + \Delta_{n_j(1)}^{n}}
		\circ \dots \circ
		\Psiaux^j_{N_j-1}
		\right)'
		\Big(\faux{n_j(N_j)}(\Psiaux_{N_j}(e^{\sigma + i\theta}e^{i\preimageaux_{n}^{j}}))
		\Big)
		\Bigg|.
	\end{align*}
	
	The two maps
	above whose derivatives we consider 
	can be generated by the backward equation using driving functions
	which we will call $\xi^0$ (generating
	$\Phiale_0 \circ \dots \circ \Psi_{N_j-1}^j$)
	and $\xi^1$ (generating 
	$R_{\Delta_1^n} \circ \dots \circ \Psiaux^j_{N_j-1}$).
	We can now use Lemma~\ref{ucomp} to compare the above derivatives.
	We have the bound
	$\| \xi^0 - \xi^1 \|_{\infty} \leq |\delta_1| + \dots + |\delta_n|$.
	Taking $z_0
	= \fale{n_j(N_j)}(\Psi_{N_j}(e^{\sigma + i\theta}e^{i\preimageale_{n}^{j}}))$,
	we have $|z_0| - 1 \geq \cparam^{1/2}$,
	so using standard estimates for conformal maps
	\cite[Chapter 4.4, Corollary 6]{duren-univalent} we have
	$\frac{\cparam^{1/2}}{A} \leq |(u_t^0)'(z_0)| \leq \frac{A}{\cparam^{1/2}}$
	for all $0 < t < T$,
	where the constant $A$ depends only on $T$ and $K_0$.
	This gives us, modifying the constant $A = A(T, K_0)$ appropriately
	(as we will throughout),
	$\frac{\cparam^{1/2}}{A} t \leq |\Lambda_t| \leq \frac{A}{\cparam^{3/2}}t$.
	
	The right-hand side of \eqref{zassump} is bounded below by
	\begin{align*}
		\inf_{0 < t \leq T}
		\left(
		\frac{\cparam^{1/2}}{A \cparam^{-1/2}}
		\wedge
		\left(
		t \frac{A\cparam^{-1/2}}{\cparam^{3/2}}
		\right)^{-1}
		\right)
		\geq
		A^{-1} \cparam^{2},
	\end{align*}
	so the condition \eqref{zassump} is satisfied by
	$z = \faux{\hat\theta^*_{n_j(N_j)} + \Delta_{n_j(N_j)}^n}(
	\Psiaux^j_{N_j}(e^{\sigma + i\theta} e^{i\preimageaux_n^j})
	)$.
	
	The resulting bounds on
	$\left|
	\log \frac{(u_t^0)'(z)}{(u_t^0)'(z_0)}
	\right|$
	and
	$\left|
	\log \frac{(u_t^1)'(z)}{(u_t^0)'(z)}
	\right|$
	are then, by Lemma~\ref{ucomp},
	\[
	\left|
	\log
	\frac{(u_t^0)'(z)}
	{(u_t^0)'(z_0)}
	\right|
	\leq
	A |z-z_0|
	t \cparam^{-2}
	\]
	and
	\[
	\left|
	\log
	\frac{(u_t^1)'(z)}
	{(u_t^0)'(z)}
	\right|
	\leq
	A
	(|\delta_1| + \dots + |\delta_n|)
	\cparam^{-7/2}.
	\]
	
	Expanding \eqref{f-explicit} for $w$ close to $1$,
	we can estimate
	\begin{align}
		\label{near-tip-distortion}
		f(w) = 1 + d(\cparam) + O\left( \frac{|w - 1|^2}{\cparam^{1/2}} + |w-1| \right),
	\end{align}
	so using \eqref{distancefromtip-ale} and \eqref{distancefromtip-aux},
	\begin{align*}
		|z - z_0|
		&\leq
		(1+d(\cparam))
		\left|
		\thetaale_{n_j(N_j)} - (\hat\theta^*_{n_j(N_j)} + \Delta_{n_j(N_j)}^n)
		\right|\\
		&\phantom{\leq}
		+
		A \frac{|e^{\sigma + i\theta} - 1|^2}{\cparam^{1/2}}
		\left(
		\left|(\Psi^j_{N_j})'(e^{i\preimageale_n^j})\right|
		+
		\left|(\Psiaux^j_{N_j})'(e^{i\preimageaux_n^j})\right|
		\right)\\
		&\leq
		A( |\delta_1| + \dots + |\delta_n|
		+ \cparam^{-1/2} |e^{\sigma + i\theta} - 1|^2
		).
	\end{align*}
	
	Hence
	\begin{align*}
		\left|
		\log\frac{
			\left(
			R_{\Delta_1^n}
			\circ
			\Phiaux_0
			\circ
			R_{-\Delta_1^n}
			\circ
			\dots \circ
			\Psiaux^j_{N_j-1}
			\right)'\left(
			\faux{n_j(N_j)}(\Psiaux_{N_j}(e^{\sigma + i\theta}e^{i\preimageaux_{n}^{j}}))
			\right)
		}{
			\left(
			\Phiale_0 \circ	\Psi_{0}^j \circ \fale{n_j(1)} \circ \dots \circ \Psi_{N_j-1}^j
			\right)'\left(
			\fale{n_j(N_j)}(\Psi_{N_j}(e^{\sigma + i\theta}e^{i\preimageale_{n}^{j}}))
			\right)
		}
		\right|
	\end{align*}
	is
	\begin{align}
		\nonumber
		\left|
		\log\frac{
			(u_T^1)'(z  )
		}{
			(u_T^0)'(z_0)
		}
		\right|
		&\leq
		\left|
		\log\frac{
			(u_T^1)'(z)
		}{
			(u_T^0)'(z)
		}
		\right|
		+		
		\left|
		\log\frac{
			(u_T^0)'(z  )
		}{
			(u_T^0)'(z_0)
		}
		\right|\\
		\label{underneath-tip-comparison}
		&\leq
		A
		\left(
		(|\delta_1| + \dots + |\delta_n|)\cparam^{-7/2}
		+
		|e^{\sigma + i\theta} - 1|^2 \cparam^{-5/2}
		\right).
	\end{align}
	
	Putting together \eqref{non-singular-ratio},
	\eqref{tip-comparison} and \eqref{underneath-tip-comparison}
	we have
	\begin{align*}
		\left|
		\log\frac{
			(\Phiale_n)'(e^{\sigma + i\theta}e^{i\preimageale_{n}^{j}})
		}{
			(\Phiaux_n)'(e^{\sigma + i\theta}e^{i\preimageaux_{n}^{j}})
		}
		\right|
		&\leq
		A
		\left(
		\frac{|\delta_1| + \dots + |\delta_n|}{\cparam^{7/2}}
		+
		\frac{|e^{\sigma + i\theta} - 1|}{\cparam^{1/2}}
		+
		\frac{|e^{\sigma + i\theta} - 1|^2}{\cparam^{5/2}}
		\right)
	\end{align*}
	and we can check that this is bounded by $AT \cparam^{3/2}$,
	so our claim and the result follow.
\end{proof}

The following three lemmas will be used to prove
Proposition~\ref{thm:ale-to-aux-inductive}.

First we show that
the probability of attaching more than distance $D$
from \emph{any} tip is $o(\cparam)$ in each model.
We will demonstrate it only for the ALE,
because identical proofs work for the auxiliary model,
setting $D = 0$ instead of $D = \sqrt{\sigma}$.

\begin{lemma}
	\label{partition-function-lower-bound}
	There exists a constant $A$ depending only on $\eta$, $T$ and the initial conditions
	such that
	for any $1 \leq n < \rdown{T/\cparam}$,
	on the event $\{ n < \tau_D \}$
	we have
	\[
	Z_n \geq \cparam^{\eta} \sigma^{-(\eta - 1)}
	\]
	almost surely.
\end{lemma}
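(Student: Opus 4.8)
The plan is to bound $Z_n$ from below by the contribution to $\int_{\T}|\Phi_n'(e^{\sigma+i\theta})|^{-\eta}\,\d\theta$ coming from a small arc around the preimage $e^{i\phi^j_n}$ of one of the tips, where the integrand blows up. Fix any arm $j$. The heart of the matter is the following claim: on $\{n<\tau_D\}$ there is a constant $A=A(K_0,k,\eta,T)$ such that, for all $\theta$ with $|\theta|<A^{-1}\cparam^{1/2}$,
\[
	\bigl|\Phi_n'\bigl(e^{\sigma+i\theta}e^{i\phi^j_n}\bigr)\bigr|\ \le\ A\,\bigl|e^{\sigma+i\theta}-1\bigr|,
\]
i.e.\ $\Phi_n'$ vanishes linearly at $e^{i\phi^j_n}$ with a coefficient bounded uniformly in $n$.

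I would prove this claim just as in the preceding lemma. Decompose $\Phi_n$ near $e^{i\phi^j_n}$ as in \eqref{ale-decomposition}, so $\Phi_n = g\circ \fale{n_j(N_j)}\circ\Psi^j_{N_j}$, where (i) $\Psi^j_{N_j}$ is conformal with derivative $\asymp 1$ near $e^{i\phi^j_n}$, its zeros and poles lying near the other arms, which stay a fixed distance away on $\{n<\tau_D\}$ (as in Remark 3.7 of \cite{selander-thesis}); (ii) $\fale{n_j(N_j)}$ is the last slit attached to arm $j$, whose derivative vanishes linearly at its tip preimage with coefficient $\asymp\cparam^{-1/2}$ by \eqref{general-tip-derivative}, and the relevant evaluation point is at distance $\asymp|e^{\sigma+i\theta}-1|$ from that tip preimage; and (iii) $g=\Phiale_0\circ\Psi^j_0\circ\dots\circ\Psi^j_{N_j-1}$ telescopes to $\Phiale_{n_j(N_j)-1}$ up to a bounded rotation, so that $g'$ itself vanishes linearly at arm $j$'s tip preimage, while the point $\fale{n_j(N_j)}(\Psi^j_{N_j}(e^{\sigma+i\theta}e^{i\phi^j_n}))$ sits at distance $\asymp\cparam^{1/2}$ from that zero, making $|g'|$ there $\asymp\cparam^{1/2}$ (note $\sqrt\sigma\ll\cparam^{1/2}$, so the $\delta$-shifts and the offset of the attachment point from the tip are negligible). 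Multiplying the three factors gives the displayed bound. Equivalently, the coefficient is (up to a constant) $|\Phi_n''(e^{i\phi^j_n})|$, which on $\{n<\tau_D\}$ is comparable, by the Koebe distortion theorem, to the reciprocal of the Euclidean scale of arm $j$ near its tip — at least $d_j/2$ — hence bounded; but making this comparison precise \emph{uniformly in $n$} is exactly where Lemma~\ref{ucomp}, applied to the backward equation (comparing $g$ to a clean reference slit of the same total capacity), is needed, since its bounds are integrals over $[0,T]$ and therefore do not accumulate over the $\asymp T/\cparam$ particles.

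Granting the claim, restrict the integral to the arc $|\theta|<A^{-1}\cparam^{1/2}$ and use $|e^{\sigma+i\theta}-1|\le 2\sigma+|\theta|$ for small $\sigma$:
\[
	Z_n\ \ge\ \int_{|\theta|<A^{-1}\cparam^{1/2}}\bigl|\Phi_n'(e^{\sigma+i\theta}e^{i\phi^j_n})\bigr|^{-\eta}\,\d\theta\ \ge\ A^{-\eta}\int_{|\theta|<A^{-1}\cparam^{1/2}}(2\sigma+|\theta|)^{-\eta}\,\d\theta.
\]
A direct computation gives $\int_{|\theta|<L}(2\sigma+|\theta|)^{-\eta}\,\d\theta=\tfrac{2}{\eta-1}\bigl((2\sigma)^{-(\eta-1)}-(2\sigma+L)^{-(\eta-1)}\bigr)$ with $L=A^{-1}\cparam^{1/2}$; since $\sigma\le\cparam^{8}$ (indeed $\sigma=\cparam^{\gamma}$ with $\gamma\ge 8$ in the theorem) we have $\sigma\ll\cparam^{1/2}$, so for $\cparam$ small the subtracted term is at most half the first and the integral is $\ge c_\eta\,\sigma^{-(\eta-1)}$ for a constant $c_\eta>0$. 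Hence $Z_n\ge A^{-\eta}c_\eta\,\sigma^{-(\eta-1)}$, and as $A^{-\eta}c_\eta$ depends only on $\eta,K_0,k,T$, for $\cparam$ sufficiently small this exceeds $\cparam^{\eta}\sigma^{-(\eta-1)}$, which is the assertion. The main obstacle is the central claim, and within it the uniform control over all $n<\tau_D\wedge\rdown{T/\cparam}$ of the vanishing coefficient (equivalently of the outer factor $g'$, equivalently of $|\Phi_n''(e^{i\phi^j_n})|$): a naive particle-by-particle recursion introduces errors that could in principle accumulate, and the remedy is the backward-equation comparison of Lemma~\ref{ucomp}, exactly as used in the previous lemma; everything else — the estimate $|e^{\sigma+i\theta}-1|\asymp\sigma+|\theta|$, the behaviour of $f'$ near the tip from \eqref{f-derivative} and \eqref{general-tip-derivative}, and the separation of the arms on $\{n<\tau_D\}$ — is routine.
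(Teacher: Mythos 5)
Your overall strategy is the same as the paper's: decompose $\Phi_n$ near the preimage $e^{i\phi^j_n}$ of a tip via \eqref{ale-decomposition}, extract the linear vanishing of $(\fale{n_j(N_j)})'$ near its tip preimage, and integrate $|\Phi_n'(e^{\sigma+i\theta}e^{i\phi^j_n})|^{-\eta}$ over a small arc. The endgame integration is correct. However, there is a genuine gap in the central claim, and it is precisely the gap the paper's remark after the lemma warns about.

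You assert $|\Phi_n'(e^{\sigma+i\theta}e^{i\phi^j_n})| \le A\,|e^{\sigma+i\theta}-1|$ with $A$ independent of $\cparam$, which amounts to asserting $|g'|\asymp\cparam^{1/2}$ at the evaluation point, i.e.\ $|\Phi_{n_j(N_j)-1}''(e^{i\phi^j_{n_j(N_j)-1}})|\asymp 1$ uniformly over all $n<\tau_D\wedge\rdown{T/\cparam}$. This would yield the sharper bound $Z_n\gtrsim\sigma^{-(\eta-1)}$, which the paper explicitly declines to prove, noting that eliminating the $\cparam^{\eta}$ factor ``is substantially more complicated, and the bound in the lemma is all that we require.'' The paper instead takes the evaluation point $\fale{n_j(N_j)}(\Psi^j_{N_j}(z))$, observes $|\cdot|-1\ge\cparam^{1/2}$, and invokes only the two-sided (and lossy) Koebe-type estimate $\cparam^{1/2}/A\le|g'|\le A\cparam^{-1/2}$; this is what produces the extra $\cparam^{\eta}$ in the statement. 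Your proposed remedy --- applying Lemma~\ref{ucomp} to compare $g$ to a clean reference slit --- is not worked out and cannot be a direct application: the driving function for $g=\Phiale_{n_j(N_j)-1}$ revisits the $j$th tip repeatedly, so the hypothesis of Lemma~\ref{ucomp} (the driving function stays bounded away from $u_t(z_0)$) is violated at $z_0=e^{i\phi^j_{n_j(N_j)-1}}$; one would need an iterated decomposition over the repeated visits together with uniform-in-$n$ error control, which is nontrivial and is the substance of the ``substantially more complicated'' proof. Your argument is easily repaired by weakening the claim to what the lemma actually states: replace the bounded-coefficient claim with $|g'|\le A\cparam^{-1/2}$ from the paper's conformal-map estimate \eqref{off-circle-derivative}, whence $|\Phi_n'|\le A\cparam^{-1}|e^{\sigma+i\theta}-1|$, and your integral step then produces exactly $Z_n\ge A\cparam^{\eta}\sigma^{-(\eta-1)}$.
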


The above bound is not sharp:
the $\cparam^{\eta}$ can be removed,
but the proof is more complicated,
and the bound in the lemma is all that we require.

\begin{proof}
	For $|\theta| < \cparam$
	and a fixed $j$,
	let $z = e^{\sigma + i (\preimageale_n^j + \theta)}$,
	and using \eqref{ale-decomposition} we can write
	\[
	|(\Phiale_n)'(z)|
	=
	\left|(\Psiale_{N_j}^j)'(z)\right|
	\left|(\fale{n_j(N_j)})'(\Psiale_{N_j}^j(z))\right|
	\left|
	(\Phiale_0 \circ \dots \circ \Psiale_{N_j - 1}^j)'(
	\fale{n_j(N_j)}(\Psiale_{N_j}^j(z))
	)
	\right|.
	\]
	Applying Lemma \ref{ucomp} to the first term in this decomposition
	with $z_0 = e^{i\preimageale_n^j}$,
	because $n \leq T/\cparam$
	we have a constant lower bound $L > 0$
	on the distance between $u_t(z_0)$
	and the driving function.
	Therefore we have from \eqref{logubound} that
	\begin{align}
		\label{push-derivative}
		|(\Psiale_{N_j}^j)'(z)|
		=
		|(\Psiale_{N_j}^j)'(e^{i\preimageale_n^j})|
		(1 + O( \cparam )).
	\end{align}
	From \eqref{f-derivative} and \eqref{distancefromtip-ale} 
	we obtain
	\begin{align}
		\nonumber
		|(\fale{n_j(N_j)})'(\Psiale_{N_j}^j(z))|
		&=
		\frac{1 + d(\cparam)}{|1 - e^{i\beta_\cparam}|}(1 + O(\cparam^{1/2}))
		\left|
		\Psiale_{N_j}^j(z) - e^{i\thetaale_{n_j(N_j)}}
		\right|\\
		\label{tip-derivative}
		&=
		\frac{1 + d(\cparam)}{|1 - e^{i\beta_\cparam}|}
		| z - e^{i\preimageale_n^j} |
		\left|
		(\Psiale_{N_j}^j)'(e^{i\preimageale_n^j})
		\right|
		(1 + O(\cparam^{1/2})).
	\end{align}
	Analysis of Loewner's reverse equation
	when $z$ is far from the driving measure shows that
	there is a constant $A > 0$ with
	\[
	A^{-1} \leq \left| (\Psiale_{N_j}^j)'(e^{i\preimageale_n^j}) \right| \leq A,
	\]
	(see \cite[equation (26)]{stv-ale})
	and using \eqref{distancefromtip-ale} and \eqref{near-tip-distortion}
	we know
	\[
	|\fale{n_j(N_j)}(\Psiale_{N_j}^j(z))| - 1 \geq \cparam^{1/2},
	\]
	and so by
	standard conformal map estimates
	(see \cite[Chapter 4.4]{duren-univalent}),
	\begin{align}
		\label{off-circle-derivative}
		\frac{\cparam^{1/2}}{A}
		\leq
		\left|
		(\Phiale_0 \circ \dots \circ \Psiale_{N_j - 1}^j)'(
		\fale{n_j(N_j)}(\Psiale_{N_j}^j(z))
		)
		\right|
		\leq
		A \cparam^{-1/2}.
	\end{align}
	Then combining \eqref{push-derivative}, \eqref{tip-derivative}
	and \eqref{off-circle-derivative},
	we have
	\begin{align*}
		|(\Phiale_n)'(e^{\sigma + i(\preimageale_n^j + \theta)})|^{-\eta}
		&\geq
		A \cparam^{\eta} | e^{\sigma + i\theta} - 1 |^{-\eta}.
	\end{align*}
	Thus,
	using the bound
	$|e^{\sigma + i\theta}-1|^2 = (e^{\sigma}-1)^2 + 2e^{\sigma}(1 - \cos\theta)
	\leq A (\sigma^2 + \theta^2)$ for $|\theta| < \cparam$ and
	substituting $x = \theta/\sigma$, we get
	\begin{align*}
		Z_n
		&\geq
		A \cparam^{\eta}
		\int_{-\cparam}^{\cparam} \frac{\d \theta}{|e^{\sigma + i\theta} - 1|^{\eta}}\\
		&\geq
		A \cparam^{\eta}
		\sigma^{-(\eta - 1)}
		\int_{-\cparam/\sigma}^{\cparam/\sigma}
		\frac{\d x}{(1 + x^2)^{\eta/2}},
	\end{align*}
	and since $\sigma < \cparam$,
	the integral term is bounded below by a constant.
\end{proof}

\begin{lemma}
	\label{no-contribution-from-circle}
	Let $\zeta \in \Delta$.
	If $\cparam$ is sufficiently small,
	then $\log |f(\zeta)| < \cparam^{1/2}$
	implies that $|f'(\zeta)| > 1$.
\end{lemma}

\begin{proof}
	Let $e^{r + i\theta} = f(\zeta)$.
	We can use Proposition~\ref{thm:f-expressions} to obtain
	\begin{align*}
		(f^{-1})'(w) = \frac{f^{-1}(w)}{w} \frac{w-1}{\sqrt{(w+1)^2 - 4e^{\cparam}w}}
	\end{align*}
	for $w \in \Delta \setminus (1, 1+d(\cparam)]$.
	
	Then
	we know that $|f^{-1}(w)| < |w|$ for any $w$,
	and if we write $w = e^{r + i\theta}$,
	then we can calculate that
	$$
	\left| \frac{w-1}{(w+1)^2 - 4e^c w} \right| < 1
	\iff
	\cos \theta < \frac{e^\cparam}{\cosh r}.
	$$
	Hence $|(f^{-1})'(w)| < 1$,
	and so $|f'(\zeta)| > 1$.
	
	Since $\cosh^{-1}(e^{\cparam}) \sim (2\cparam)^{1/2}$
	for small $\cparam$,
	if $\cparam$ is sufficiently small and $r < \cparam^{1/2}$,
	then the condition $\cos\theta < \frac{e^\cparam}{\cosh r}$ is satisfied
	for any $\theta$.
\end{proof}


\begin{lemma}
	\label{on-circle-deriv-bound}
	For any sufficiently large constant $A_T$,
	there exists a constant $B > 0$
	depending on $T$ and $K_0$
	such that
	on the event $\{n < \couplingfailure\}$,
	i.e.\ when the ALE and auxiliary models choose the same particle
	at each of the first $n$ steps,
	the following is almost surely true:
	for any $\theta \in \T$,
	if $k \leq n$ satisfies
	\begin{align*}
		|
		(\fale{l+1} \circ \fale{l+2} \circ \dots \circ \fale{n})(e^{i\theta})
		-
		e^{i\thetaale_{l}}
		|
		\geq
		|
		e^{i (\beta_\cparam + \frac{\cparam^{1/2}}{A_T})} - 1
		|
		\text{ for all } k \leq l \leq n,
	\end{align*}
	and
	$|e^{i\theta} - e^{i\thetaale_n}|
	\geq
	|e^{i (\beta_\cparam + \frac{\cparam^{1/2}}{A_T})} - 1|
	$
	then
	$
	\left|
	(\fale{k} \circ \dots \circ \fale{n})'(e^{i\theta})
	\right|
	\leq
	B \cparam^{-1/2}.
	$
\end{lemma}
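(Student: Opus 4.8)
\emph{Overview.} The plan is to pass to the backward Loewner flow, turn $\log\bigl|(\fale{k}\circ\dots\circ\fale{n})'(e^{i\theta})\bigr|$ into an explicit positive integral, and then estimate that integral interval by interval, using the margin $\cparam^{1/2}/A_T$ in the hypothesis together with the (almost-)telescoping structure of the resulting sum. Re-indexing particles $k,\dots,n$, the map $\Psi:=\fale{k}\circ\dots\circ\fale{n}$ equals $u_{T'}$ where $T'=(n-k+1)\cparam\le T$ and $u$ solves \eqref{eq:backwards-equation} with the piecewise constant driving function $s\mapsto\thetaale_{n-\rdown{s/\cparam}}$; then $u_{j\cparam}(e^{i\theta})=(\fale{n-j+1}\circ\dots\circ\fale{n})(e^{i\theta})$ is exactly the point appearing in the hypothesis. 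Since that trajectory stays at angular distance $\ge\beta_\cparam+\cparam^{1/2}/A_T$ from the current driving point, it lies outside the pre-image of each attached slit and hence stays on $\T$ for all $s\in[0,T']$. Differentiating \eqref{eq:backwards-equation} gives $\partial_s\log u_s'=1-2e^{2i\xi}(u_s-e^{i\xi})^{-2}$; taking real parts on $\T$ and using $1-2\re(e^{ix}-1)^{-2}=\tfrac1{2\sin^2(x/2)}$ yields
\[
  \log|\Psi'(e^{i\theta})|=\int_0^{T'}\frac{ds}{2\sin^2(x_s/2)},\qquad x_s:=\mathrm{dist}_\T\!\bigl(u_s(e^{i\theta}),\,e^{i\thetaale_{n-\rdown{s/\cparam}}}\bigr)\in(0,\pi].
\]

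\emph{Per-interval contributions.} On $[j\cparam,(j+1)\cparam)$ the driving point is fixed and $x_s$ obeys $\dot x_s=-\cot(x_s/2)$, so it decreases; changing variables, this interval contributes exactly $\log\dfrac{\tan(y_j^s/2)}{\tan(y_j^e/2)}$, with $y_j^s:=x_{j\cparam}$, $y_j^e:=x_{(j+1)\cparam}$ and $G(y_j^s)-G(y_j^e)=G(\beta_\cparam)$ for $G(x):=-2\log\cos(x/2)$ (this records that one particle removes capacity exactly $\cparam$; $G(\beta_\cparam)=\cparam$). The hypothesis gives $y_j^s\ge\rho:=\bigl|e^{i(\beta_\cparam+\cparam^{1/2}/A_T)}-1\bigr|$ for every $j$, and with $\beta_\cparam^2=4\cparam+O(\cparam^2)$ this forces $y_j^e\gtrsim\cparam^{1/2}/\sqrt{A_T}$; hence every single interval contributes $O_{A_T}(1)$ (consistent with Corollary~\ref{thm:f-prime-estimate}, as $|w-e^{\pm i\beta_\cparam}|\gtrsim\cparam^{1/2}/A_T$ there), and the last interval contributes at most $\tfrac12\log(1/\cparam)+O_{A_T}(1)$. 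Intervals on which $x_s$ stays bounded away from $0$ (including those near the antipode of the driving point, where $x_s$ moves by only $O(\cparam)$) contribute $O_{\epsilon_0}(\cparam)$ each, hence $O_{T,\epsilon_0}(1)$ in total.

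\emph{Summation, and the main obstacle.} It remains to bound the total contribution of the ``near'' intervals, where $y_j^s$ is small. Here one uses the identity
\[
  \log|\Psi'(e^{i\theta})|=\log\frac{\tan(y_0^s/2)}{\tan(y_{\hat n-1}^e/2)}+\sum_{j\ge1}\Bigl(\log\tan(y_j^s/2)-\log\tan(y_{j-1}^e/2)\Bigr),
\]
and re-telescopes the sum over each maximal run of near intervals whose driving points all lie near one tip pre-image: within such a run the defect terms $\log\tan(y_j^s/2)-\log\tan(y_{j-1}^e/2)$ are negligible because consecutive attachments to the same tip differ by $O(\sqrt\sigma)\ll\cparam$ (the pre-image of a tip moves by $O(\sqrt\sigma)$ between successive attachments to it --- cf.\ Remark 3.7 of \cite{selander-thesis}), so the run contributes $\approx\log\tfrac{\tan(y_{\mathrm{in}}^s/2)}{\tan(y_{\mathrm{out}}^e/2)}\le\tfrac12\log(1/\cparam)+O_{A_T,\epsilon_0}(1)$; inside a run the trajectory is dragged monotonically towards that tip, and once it reaches distance $\rho$ the hypothesis forbids any further attachment to that tip, so the run terminates. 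Between runs the displacement is $O(\cparam)$ per step and the straddling defect terms cancel in pairs up to $O(\sqrt{A_T}\,\cparam^{1/2})$, contributing $O_T(1)$, and there are only $O_{T,K_0}(1)$ runs since distinct tip pre-images stay $\Theta_{T,K_0}(1)$ apart and drift slowly. Collecting everything gives $\log|\Psi'(e^{i\theta})|\le\tfrac12\log(1/\cparam)+C(T,K_0,A_T)$, i.e.\ $|\Psi'(e^{i\theta})|\le B\cparam^{-1/2}$. The hard part is exactly this last step: one must rule out the trajectory repeatedly ``chasing'' the moving driving points and collecting a constant at each of linearly many steps, which is where the margin $\cparam^{1/2}/A_T$ (with $A_T$ large) is crucial; this is the multi-arm analogue of the single-slit estimate behind the main theorem of \cite{stv-ale}, and it is cleanest to prove it by induction on $n$ in tandem with the estimates of Section~4 (so that the separation and slow drift of the tip pre-images may be assumed). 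I expect that bookkeeping, rather than any individual inequality, to be the real obstacle.
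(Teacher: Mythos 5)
Your technical apparatus is sound and matches the paper's in essence: the identity $\log|\Psi'(e^{i\theta})|=\int_0^{T'}\frac{ds}{2\sin^2(x_s/2)}$ is correct, the per-interval contribution $\log\frac{\tan(y_j^s/2)}{\tan(y_j^e/2)}$ is exactly the factor $|f_{\cparam}'|=e^{\cparam/2}\frac{|e^{iy^s}-1|}{|e^{iy^e}-1|}$ that the paper extracts from \eqref{f-derivative}, and the conservation law $G(y_j^s)-G(y_j^e)=\cparam$ is right. The paper also decomposes into ``at-tip'' and ``away-from-tip'' blocks, controls the away blocks by bounded angular distortion (\eqref{far-from-tip-derivative}--\eqref{linear-angle-changes}), telescopes the at-tip factors, and uses the margin $\cparam^{1/2}/A_T$ as the floor on the final at-tip distance. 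The paper additionally reduces first to the idealised sequence $\bar\theta_l$ (exact-tip attachments) via Lemma~\ref{ucomp}, rather than carrying the $O(\sqrt\sigma)$ perturbation through the estimate; that is a bookkeeping choice and either way works.

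The genuine gap is the one you flag yourself. Your sketch allows several ``runs'' (maximal blocks where the trajectory is close to some moving tip), and you bound each run's contribution by $\tfrac12\log(1/\cparam)+O_{A_T,\epsilon_0}(1)$ while also asserting there are $O_{T,K_0}(1)$ runs. As written, this only gives $\log|\Psi'|\le C\log(1/\cparam)+O(1)$ with a constant $C$ potentially larger than $1/2$, i.e.\ $|\Psi'|\le B\cparam^{-C}$, which does not prove the lemma. The missing step is the structural fact the paper proves before telescoping: with $m:=\tfrac{L}{2}e^{-A_{L/2}T}$ (where $L$ is the minimum tip-preimage separation), the backward trajectory from any fixed $e^{i\theta}$ can come within $m$ of at most \emph{one} tip preimage over the whole time $[0,T]$, because once within $m$ of tip $j$ the $e^{\pm A_{L/2}\cparam}$ per-step distortion of the distance to $\preimageale^j$ keeps the trajectory within $L/2$ of $\preimageale^j$ for all later (backward) steps, hence at distance $\ge L/2>m$ from every other tip. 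With that observation, only the single sequence of at-tip blocks for that one slit needs telescoping, producing exactly one ratio $\frac{|e^{i\delta_0}-1|}{|e^{i\delta_p'}-1|}\le\frac{2}{\cparam^{1/2}/A_T}$ and hence a single factor $\cparam^{-1/2}$; all other contributions are $e^{O(T)}$. In short: you have correctly identified where the difficulty lives, but ``$O(1)$ runs $\times$ $\tfrac12\log(1/\cparam)$ each'' is not the right invariant. You need (and should prove, it is a short argument from the separation $L$ and the multiplicative distortion bound) that there is effectively only one run, i.e.\ one slit whose tip is ever approached, and it is this reduction --- not the per-interval or per-run estimates, which you already have --- that closes the proof.
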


\begin{proof}
	On the event $\{n < \couplingfailure\}$,
	each angle $\thetaale_{l}$ is within $D$
	of the $j_l$th tip, $\phi^{j_l}_{l-1}$.
	Define the coupled angle sequence $(\bar\theta_l)_{l\leq n}$
	and corresponding particle maps $(\bar f_l)_{l\leq n}$
	inductively by $\bar\theta_1 = \phi^{j_1}_0$,
	and $\bar\theta_l = \bar\phi^{j_l}_{l-1}$ for $l > 1$,
	where $\bar\phi^{j_l}_{l-1}$ is the preimage of the $j_l$th tip
	under $\bar\Phi_{l-1} = \bar f_1 \circ\dots\circ \bar f_{l-1}$.
	The difference between the two angle sequences is then
	almost surely
	$\sup_{l \leq n} |\thetaale_l - \bar\theta_l|
	< A \cparam^{-1}D$,
	for a constant $A > 0$.
	Then using the bound on $\left|\log \frac{(u^1_t)'(z)}{(u^0_t)'(z)} \right|$
	from Lemma~\ref{ucomp},
	we can establish the bound we want on the ALE process
	by establishing it using the angle sequence 
	$(\bar\theta_l)_{l \leq n}$,
	since $D = \sqrt{\sigma}$ is very small.
	We can also think of this as first considering $D = 0$
	and then perturbing the result.\\
	
	Since we have fixed a $T < \infty$,
	there is a positive constant $L$ such that
	$L < \min_{i \not= j} |\bar\preimageale^i_l - \bar\preimageale^j_l|$
	for all $1 \leq l \leq n$.
	Let $A_{L}$ be the corresponding constant in
	the bounds \eqref{far-from-tip-derivative}
	and \eqref{linear-angle-changes}
	from Lemma~\ref{thm:angular-distortion}.
	For any given $\theta \in \T$ there can be at most one $j$
	such that
	there exists an $l \leq n$
	with $|\bar f_l(e^{i\theta}) - \bar\preimageale^j_{l-1}|
	\leq L e^{- A_{L} T} =: m$.
	If there is no such $j$,
	or the corresponding $l$
	is less than $k$,
	then by \eqref{far-from-tip-derivative},
	$
	\left|
	(\bar f_{k} \circ \dots \circ \bar f_{n})'(e^{i\theta})
	\right|
	\leq
	e^{A_m T}.
	$\\
	
	Suppose that such a $j$ does exist.
	We will split the angle sequence
	$(\bar \theta_l)_{k \leq l \leq n}$
	into times when particles are attached to slit $j$
	and times when particles are attached elsewhere.
	
	Set $n_0 = \max \{ k \leq l \leq n : \bar\theta_l = \bar\preimageale^j_l \}$,
	then for $i \geq 0$ set
	$n_i' = \max \{ k \leq l < n_i : \bar\theta_l \not= \bar\preimageale^j_l \}$
	and $n_{i+1} = \max \{ k \leq l < n_i' : \bar\theta_l = \bar\preimageale^j_l \}$
	until one of the sets is empty,
	and call the last well defined value $n_{p+1}$
	(if $n_{p}'$ is the final well-defined value of the above maxima,
	then set $n_{p+1} = n_{p}'$).
	Define the capacities $t_0 = (n - n_0)\cparam$,
	and for $i \geq 0$,
	$t_i' = (n_i - n_i')\cparam$
	and 
	$t_{i+1} = (n_i' - n_{i+1})\cparam$.
	The map $\bar f_{k} \circ \dots \circ \bar f_{n}$
	is then generated by using the backwards equation
	\eqref{eq:backwards-equation}
	with a driving function which first
	is at a distance at least $L$ from $\bar\preimageale^j_n$
	for a period of duration $t_0 \geq 0$,
	then takes the constant value $\preimageale^j_{n_0}$
	for a period of duration $t_0' \geq \cparam$,
	then is distance at least $L$ from tip $j$ again
	for duration $t_1 \geq \cparam$, and so on,
	terminating after a total time $(n-k+1)\cparam \leq T$.
	
	We can therefore decompose $\bar f_{k} \circ \dots \circ \bar f_{n}$
	as
	\begin{align*}
		\Psi_{p+1}
		\circ
		f_{t_p', \bar\preimageale^j_{n_p'}}
		\circ
		\Psi_p
		\circ 
		\dots
		\circ
		f_{t_0', \bar\preimageale^j_{n_0'}}
		\circ
		\Psi_0
	\end{align*}
	where the $\Psi$ maps are generated by driving functions
	bounded away from the $j$th slit.
	
	Using \eqref{far-from-tip-derivative},
	we have
	$
	|\Psi_i'(
	f_{t_{i-1}',\bar\preimageale^j_{n_{i-1}'}} \circ \dots \circ \Psi_0(e^{i\theta})
	)|
	\leq
	e^{A_{L} t_i}
	$
	for each $i$,
	and so the total contribution to 
	$
	\left|
	(\bar f_{k} \circ \dots \circ \bar f_{n})'(e^{i\theta})
	\right|
	$
	by $\Psi'$ terms is bounded by a constant $e^{A_{L}T}$.
	
	The other terms can give a larger contribution.
	Let $\delta_i$ be the distance in $\R/2\pi \Z$ between
	$\arg [(\Psi_i \circ \dots \circ \Psi_0)(e^{i\theta})]$
	and $\bar\preimageale^j_{n_i}$,
	and similarly let $\delta_i'$ be
	the distance between
	$\arg [(f_{t_{i}',\bar\preimageale^j_{n_{i}'}} \circ \Psi_i \circ \dots \circ \Psi_0)(e^{i\theta})]$
	and $\bar\preimageale^j_{n_i}$.
	Then using \eqref{f-derivative}
	we can compute
	\begin{align*}
		\left|f_{t_{i}',\bar\preimageale^j_{n_{i}'}}'(
		\Psi_i \circ \dots \circ \Psi_0)(e^{i\theta})
		)\right|
		=
		e^{t_i' / 2}
		\frac{
			|e^{i\delta_i} - 1|
		}{
			|e^{i\delta_i'} - 1|
		}.
	\end{align*}
	Using \eqref{linear-angle-changes},
	we can calculate $\frac{\delta_{i+1}}{\delta_i'} \leq e^{A_{L}t_i}$,
	and so the total contribution from all of the
	$f_{t_{i}', \bar\preimageale^j_i}'$ terms is,
	possibly increasing the constant $A_{L}$,
	\begin{align*}
		e^{\frac{1}{2}\sum_{i=0}^{p} t_i'}
		\frac{
			\left| e^{i\delta_0} - 1 \right|
		}{
			\left| e^{i\delta_p'} - 1 \right|
		}
		\prod_{i=0}^{p-1}
		\frac{|e^{i\delta_{i+1}} - 1|}{|e^{i\delta_i'} - 1|}
		&\leq
		e^{(\frac{1}{2} + A_{L}) T}
		\frac{2}{\cparam^{1/2}/A_T}\\
		&=
		B \cparam^{-1/2}
	\end{align*}
	as required.	
\end{proof}

\begin{proof}[Proof of Proposition~\ref{thm:ale-to-aux-inductive}]
	First note that
	on the event $\{n < \tau_D\}$, we have
	$$\P[ n+1 < \tau_D \,|\, (\theta_1, \dots, \theta_n) ] \leq \left(1 - \sum_j \int_{\preimageale^j_n-D}^{\preimageale^j_n+D} h_n(\theta) \,\d \theta\right) + 
	\left(1 - \sum_j \int_{\preimageaux^j_n-D}^{\preimageaux^j_n+D} h_n^*(\theta)\,\d \theta\right)$$
	almost surely.
	We will hence bound $1 - \sum_j \int_{-D}^{D} h_n( \preimageale^j_n +\theta)\,\d \theta$,
	and a similar bound will apply to the auxiliary term.
	Let $z = e^{\sigma + i\theta}$.
	Note that
	\begin{align*}
		1 - \sum_j \int_{-D}^{D} h_n(\preimageale^j_n + \theta) \d \theta
		&=
		\sum_j
		\int_{D < |\theta - \preimageale^j_n| < \cparam^2} h_n(\theta)\d \theta
		+
		\int_{\{ |\theta - \preimageale^j_n| \geq \cparam^2\, \forall j \}}
		h_n(\theta)\d \theta.
	\end{align*}
	If $D < |\theta - \preimageale_n^j| < \cparam^2$
	for some $j$,
	then we can use \eqref{push-derivative}, \eqref{tip-derivative}
	and \eqref{off-circle-derivative}
	to establish the almost-sure upper bound
	\begin{align}
		\label{near-tip-upper-bound}
		|(\Phiale_n)'(e^{\sigma + i\theta})|^{-\eta} \leq A |e^{\sigma + i\theta} - e^{i\preimageale_n^j}|^{-\eta}
		\leq A D^{-\eta}.
	\end{align}
	Then using \eqref{near-tip-upper-bound} and Lemma~\ref{partition-function-lower-bound},
	almost surely
	\begin{align*}
		\int_{\preimageale_n^j + D}^{\preimageale_n^j + \cparam^2} h_n(\theta) \d\theta
		&\leq
		A
		\cparam^{-\eta}
		D^{-(\eta - 1)}
		\sigma^{\eta-1}.
	\end{align*}
	
	Next we suppose $\theta$ is further than $\cparam^2$ from any tip $\preimageale_n^j$.
	
	
	
	We will classify the point $z = e^{\sigma + i\theta}$ based on its projection $\hat{z} = e^{i\theta}$
	and follow them both through their backwards evolutions,
	so let $z_k = (f_k \circ f_{k+1} \circ \dots \circ f_n)(z)$,
	and also let
	$\hat z_k = (f_k \circ f_{k+1} \circ \dots \circ f_n)(\hat z)$.
	Let $L$ be a lower bound on
	$\min_{l \leq n}\min_{i \not= j}|\preimageale_l^i - \preimageale_l^j|$,
	let $A_{L/2}$ be the constant in Lemma~\ref{thm:angular-distortion} corresponding to points further than $L/2$
	from 1,
	and let $A_T = e^{A_{L/2}T}$.
	Observe that there can be at most two values of $k$ such that
	both $\hat z_k \in \T$ and
	$
	|\hat z_k - e^{i\thetaale_{k-1}}|
	<
	|e^{i(\beta_\cparam + \frac{\cparam^{1/2}}{A_T})} - 1|,
	$
	because after the second condition is met once,
	applying $\rdown{T/\cparam}$ maps cannot move $\hat z_k$
	further than $\beta$ away from a tip,
	and it will be taken off the circle the next time
	a slit is attached near to it.
	This gives us three cases
	depending on the number of $k$s.
	
	\textbf{Case I:} First suppose there is no such $k$.
	Then by Lemma~\ref{on-circle-deriv-bound},
	$|(\fale{1} \circ \dots \circ \fale{n})'(\hat z)| \leq B\cparam^{-1/2}$
	almost surely,
	so by \eqref{loewner-taylor} from Lemma~\ref{ucomp},
	\[
	|(\fale{1} \circ \dots \circ \fale{n})(z)
	- (\fale{1} \circ \dots \circ \fale{n})(\hat z)|
	\leq B \cparam^{-1/2}\sigma.
	\]
	This gives us
	$|(\fale{1} \circ \dots \circ \fale{n})(z)| - 1 \leq B \cparam^{-1/2}\sigma$,
	and hence by Lemma~\ref{no-contribution-from-circle},
	$
	|(\fale{1} \circ \dots \circ \fale{n})'(z)| > 1.
	$
	Let $j$ be the index of the closest $\preimageale_0^j$
	to $(\fale{1} \circ \dots \circ \fale{n})(\hat z)$.
	We can decompose $\Phi_0$
	as $f_{t_j,\preimageale_0^j} \circ \Psi^j_0$,
	where $\Psi^j_0$ is generated in the reverse equation
	by a driving function
	run for total time $c_0 - t^j$
	which stays at least distance $L/4$
	from $(u_s(\hat z))_{s \leq c_0 - t^j}$
	at all times.
	Then using equation (26) of \cite{stv-ale},
	\[
	e^{-A c_0}
	\leq
	|(\Psi_0^j)'( (\fale{1} \circ \dots \circ \fale{n})(\hat z) )|
	\leq
	e^{A c_0}
	\]
	for an appropriate constant $A = A(K_0)$.
	If none of the $n$ particles have been attached to the initial $j$th slit,
	then the assumption we made on $\theta$ tells us
	$|(\Psi_0^j \circ \fale{1} \circ \dots \circ \fale{n})(\hat z)
	- e^{i\preimageale_0^j}| \geq \cparam / A$.
	If any particles \emph{were} attached to the $j$th slit,
	then we must have
	$|(\Psi_0^j \circ \fale{1} \circ \dots \circ \fale{n})(\hat z) )
	- e^{i\preimageale_0^j}| \geq \cparam^{1/2}/A > \cparam/A$,
	otherwise some $k$ as above would exist.
	In either case we have
	\[
	|(\Psi_0^j \circ \fale{1} \circ \dots \circ \fale{n})(z) )
	- e^{i\preimageale_0^j}|
	\geq
	\cparam/2A,
	\]
	and so 
	\[
	|f_{t_j,\preimageale_0^j}'(
	(\Psi_0^j \circ \fale{1} \circ \dots \circ \fale{n})(z)
	)|
	\geq
	\cparam/18A.
	\]
	Thus, for a modified constant $A$,
	$|\Psiale_0'(z)| \geq \cparam/A$,
	and so $h_n(\theta) = O( \sigma^{\eta - 1}/\cparam^{2\eta} )$.\\
	
	\textbf{Case II:} If there is only one such $k$,
	we must have
	$
	|\hat z_k - e^{i\thetaale_{k-1}}|
	\geq
	\frac{\cparam^{1/2}}{2 A_T^2},
	$
	since there is an $l \geq k$ with $\thetaale_{l}$
	attached at the same slit as $\thetaale_{k-1}$
	and if $\hat z_k$
	were any closer to $e^{i\thetaale_{k-1}}$
	then
	$\hat z_{l+1}$
	would be within $|e^{i(\beta_\cparam + \frac{\cparam^{1/2}}{A_T})} - 1|$
	of $e^{i\thetaale_l}$.
	
	So
	$
	|(\fale{k-1})'( \hat z_k )|
	\geq
	\left|
	f'\left(
	\exp\left(i\frac{\cparam^{1/2}}{2A_T^2}\right)
	\right)
	\right|
	\geq
	A
	$
	for some constant $A > 0$,
	then
	using Lemma~\ref{ucomp}
	and Lemma~\ref{on-circle-deriv-bound} 
	%
	we can derive
	\begin{align}
		\label{no-contribution-proof:away-from-tip-bound}
		|(\fale{k-1})'( z_k )|
		\geq
		A^{-1},
	\end{align}
	since $\sigma \ll \cparam^{3/2}$.
	
	To compute $|(\Phi_0 \circ \fale{1} \circ \dots \circ \fale{k-2})'
	(\hat z_{k-1})|$,
	consider two cases:
	(a)
	$|\hat z_k -	e^{i\thetaale_{k-1}}|
	\leq
	|e^{i(\beta - \frac{\cparam^{1/2}}{A_T})} - 1|$,
	or (b)
	$|\hat z_k -e^{i\thetaale_{k-1}}|
	>
	|e^{i(\beta - \frac{\cparam^{1/2}}{A_T})} - 1|$.
	
	In case (a),
	$| \hat z_{k-1} | - 1
	\geq
	|f(e^{i(\beta - \frac{\cparam^{1/2}}{A_T})})| - 1
	\geq
	\frac{\cparam^{1/2}}{A}$.
	By an identical argument to that which established
	\eqref{off-circle-derivative},
	\[
	|(\Phi_0 \circ \fale{1} \circ \dots \circ \fale{k-2})'(
	\hat z_{k-1}
	)|
	\geq
	\frac{\cparam^{1/2}}{A},
	\]
	and again this easily extends to
	\begin{align}
		\label{no-contribution-proof:off-circle-bound}
		|(\Phi_0 \circ \fale{1} \circ \dots \circ \fale{k-2})'(
		z_{k-1}
		)|
		\geq
		\frac{\cparam^{1/2}}{A},
	\end{align}
	since $\sigma \ll \cparam^{3/2}$.
	Combining
	\eqref{no-contribution-proof:off-circle-bound} with
	\eqref{no-contribution-proof:away-from-tip-bound}
	and Lemma~\ref{no-contribution-from-circle},
	we have
	\begin{align*}
		|(\Phiale_n)'(z)| \geq \frac{\cparam}{A},
	\end{align*}
	and so $h_n(\theta) = O( \sigma^{\eta-1}/\cparam^{2\eta} )$.
	
	In case (b),
	we have
	\[
	\min_{\pm}
	|
	\hat z_k
	-
	e^{i(\thetaale_{k-1} \pm \beta_\cparam)}
	|
	<
	\frac{2 \cparam^{1/2}}{A_T},		
	\]
	and so
	\[
	\min_{\pm}
	|
	z_k
	-
	e^{i(\thetaale_{k-1} \pm \beta_\cparam)}
	|
	<
	\frac{3 \cparam^{1/2}}{A_T}.
	\]
	Without loss of generality 
	the minimum is achieved in both cases
	by $e^{i(\thetaale_{k-1} + \beta_\cparam)}$.
	Let $\delta = |z_k
	-
	e^{i(\thetaale_{k-1} + \beta_\cparam)}|$,
	then by Corollary~\ref{thm:f-prime-estimate},
	\begin{align}
		\label{near-base-contribution}
		| (\fale{k-1})'(z_k) |
		\geq
		A^{-1}
		\frac{
			\cparam^{1/4}
		}{
			\delta^{1/2}
		}
	\end{align}
	and 
	Lemma~\ref{thm:distance-estimate}
	tells us that
	$|
	z_{k-1}
	-
	e^{i\thetaale_{k-1}}
	|
	\asymp
	\cparam^{1/4} \delta^{1/2}	
	$.
	If the attachment point $\thetaale_{k-1}$
	is on the $j$th slit, then it is
	within $D$ of the preimage $\preimageale^j_{k-2}$
	of $\thetaale_{l}$ under $\fale{l+1} \circ \dots \circ \fale{k-2}$,
	where $l < k-1$ was the previous time
	a particle was attached at the $j$th slit
	(without loss of generality such an $l$ exists,
	as we can decompose the initial condition
	to have a slit of capacity $\cparam$
	at the top of a longer slit in position $j$).

	By \eqref{tip-derivative},
	the size of
	$|(\fale{l})'(z_{l+1})|$
	depends on
	$|z_{l+1} - e^{i\thetaale_l}|$,
	which, similarly to \eqref{linear-angle-changes},
	satisfies
	\[
	e^{-AT}
	\leq
	\frac{
		|z_{l+1} - e^{i\thetaale_l}|
	}{
		|
		z_{k-1}
		-
		e^{i\preimageale_{k-2}^j}
		|
	}
	\leq
	e^{AT}.
	\]
	Then since
	\begin{align*}
		|
		z_{k-1}
		-
		e^{i\thetaale_{k-1}}
		|
		&\leq
		|
		z_{k-1}
		-
		e^{i\preimageale_{k-2}^j}
		|
		+
		|
		e^{i\preimageale_{k-2}^j}
		-
		e^{i\thetaale_{k-1}}
		|\\
		&\leq
		|
		z_{k-1}
		-
		e^{i\preimageale_{k-2}^j}
		|
		+
		2D,
	\end{align*}
	we have, for some constant $A$,
	\[
	A^{-1} \cparam^{1/4} \delta^{1/2}
	\leq
	|
	z_{k-1}
	-
	e^{i\preimageale_{k-2}^j}
	|
	+
	2D
	\leq
	e^{AT} |z_{l+1} - e^{i\thetaale_l}| + 2D,
	\]
	and so
	\begin{align*}
		|z_{l+1}-e^{i\thetaale_l}| \geq \sigma \vee \left(\frac{\cparam^{1/4}\delta^{1/2}}{A} - 2D\right).
	\end{align*}
	Then
	\[
	|(\fale{l})'(z_{l+1})|
	\geq
	\frac{1}{A'} \cparam^{-1/2} |z_{l+1} - e^{i\thetaale_l}|,
	\]
	and so combining this with
	Lemma~\ref{no-contribution-from-circle}
	and \eqref{near-base-contribution},
	\begin{align}
		\label{cancellation-near-base}
		|(\fale{l} \circ \dots \circ \fale{n})'(z)|
		&\geq
		\frac{ \sigma \vee ( A^{-1}\cparam^{1/4}\delta^{1/2} - 2D )}{\delta^{1/2}}.
	\end{align}
	Since
	$|z_{k-1} - e^{i\preimageale_{k-2}^j}|
	\geq \sigma$,
	\eqref{cancellation-near-base} is bounded below by
	$
	A^{-1}
	\cparam^{-1/4}
	\frac{\sigma}{9D^2}
	$.
	Then since $| z_l | - 1 \geq \cparam^{1/2}$,
	we have an overall lower bound on $|\Phi_n'(z)|$ of
	\[
	A^{-1}
	\cparam^{1/2}
	\frac{ \sigma \vee ( A^{-1}\cparam^{1/4}\delta^{1/2} - 2D )}{\delta^{1/2}}.
	\]
	Note that $\delta$ is proportional to $|z - (f_n \circ \dots \circ f_{k+1})(e^{i(\thetaale_{k-1}+\beta_\cparam)})|$,
	so we can integrate our bound on $| \Phi_n'(z) |^{-\eta}$
	from $\sigma$ to $\cparam^2$ to get
	$\int_{\{ \text{one $k$ exists} \}} h_n(\theta)\d \theta
	\leq A \sigma^{-1} D^{\eta + 2} \cparam^{-\frac{5\eta}{4} - \frac{1}{2}}$.
	
	\textbf{Case III:}
	If there are two values of $k$, $k_2 < k_1$,
	we can find a bound of the same order
	using the same argument as above, replacing $k$ by $k_1$
	and $l$ by $k_2$.
\end{proof}

\subsection{Auxiliary to multinomial model}

Next we will define the \emph{multinomial model},
in which the probability of attaching a particle at each tip
depends on the second derivative of the relevant map.
This essentially corresponds to taking $\sigma \to 0$
in the auxiliary model.

We explained the significance of the second derivative of the map
for the Laplacian path model
in Section~\ref{sec:lpm-definition},
so the multinomial model can be viewed as a halfway point
between the ALE and LPM.

\begin{definition}
	\label{def:multinomial-model}
	Begin with the same initial condition
	as the other models,
	$\Phimulti_0 = \Phi_0$.
	Let the preimages of the $k$ tips
	under $\Phimulti_n$ be $\preimagemulti_n^j$
	for $j = 1,2,\dots, k$.
	Choose $\theta_{n+1}$
	from $\{ \preimagemulti_n^1, \dots, \preimagemulti_n^k \}$,
	with probabilities
	\[
	\P( \theta_{n+1} = \preimagemulti_n^j )
	=
	\frac{
		| (\Phimulti_n)''(e^{i\preimagemulti_n^j}) |^{-\eta}
	}{
		Z_n
	},
	\]
	where $Z_n = \sum_j | (\Phimulti_n)''(e^{i\preimagemulti_n^j}) |^{-\eta}$.
	Define inductively the maps
	\begin{align*}
		\Phimulti_{n+1} = \Phimulti_n \circ f_{\theta_{n+1}}.
	\end{align*}
\end{definition}

If they choose the same tips,
the auxiliary model (re-rotated to fix the basepoints)
and the multinomial model coincide exactly.
It is therefore fairly simple to prove a coupling exists
as long as the probabilities assigned to the tips are close:

\begin{proposition}
	\label{thm:aux-multi-coupling}
	Let $(\thetaaux_l)_{l \leq n}$
	be the angle sequence for the auxiliary model,
	without the rotation used in Section~\ref{sec:ale-to-aux},
	and let $(\theta_l)_{l \leq n}$
	be the angle sequence for the multinomial model
	with the same initial conditions.
	Define the stopping time $\tau_{\not=}
	= \inf\{ l : \thetaaux_l \not= \theta_l \}$.
	On the event $\{ l < \tau_{\not=} \wedge n \}$,
	the conditional distributions of
	$\thetaaux_{l+1}$ and $\theta_{l+1}$
	are almost surely supported on the same set
	$\{ \preimagemulti_l^1, \dots, \preimagemulti_l^k \}$,
	and
	when $\sigma$ is as in Theorem~\ref{thm:ale-aux-global} and $D = \sqrt{\sigma}$
	we almost surely have
	\begin{align*}
		\max_{1 \leq j \leq k}
		|
		\P( \thetaaux_{l+1} = \preimagemulti_l^j \,|\, (\theta_k)_{k \leq l} )
		-
		\P( \theta_{l+1} = \preimagemulti_l^j \,|\, (\theta_k)_{k \leq l} )
		|
		\leq
		A
		\cparam^{-1}D
	\end{align*}
	for a deterministic constant $A$ depending on $T$ and $\eta$ and $\Phi_0$.
\end{proposition}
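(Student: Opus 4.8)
On the event $\{l<\tau_{\not=}\wedge n\}$ the first $l$ particles of the two models are attached at the same tips, so by the observation preceding the statement the re-rotated auxiliary cluster equals the multinomial cluster; write $\Phi=\Phimulti_l$ for their common map and $e^{i\preimagemulti^{1}_l},\dots,e^{i\preimagemulti^{k}_l}$ for the tip preimages. In the re-rotated frame the auxiliary model draws $\psi$ with density proportional to $|\Phi'(e^{\sigma+i\psi})|^{-\eta}$, puts $j^{*}=\mathop{\mathrm{argmin}}_{j}|e^{i\psi}-e^{i\preimagemulti^{j}_l}|$, and attaches the $(l{+}1)$st particle at the tip $\preimagemulti^{j^{*}}_l$. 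Hence both conditional laws are supported on $\{\preimagemulti^{1}_l,\dots,\preimagemulti^{k}_l\}$, and with $R_j=\{\psi:\mathop{\mathrm{argmin}}_{j'}|e^{i\psi}-e^{i\preimagemulti^{j'}_l}|=j\}$,
\[
  \P(\thetaaux_{l+1}=\preimagemulti^{j}_l)=\frac{\int_{R_j}|\Phi'(e^{\sigma+i\psi})|^{-\eta}\,\d\psi}{\int_{\T}|\Phi'(e^{\sigma+i\psi})|^{-\eta}\,\d\psi},\qquad \P(\theta_{l+1}=\preimagemulti^{j}_l)=\frac{|\Phi''(e^{i\preimagemulti^{j}_l})|^{-\eta}}{\sum_{j'}|\Phi''(e^{i\preimagemulti^{j'}_l})|^{-\eta}}.
\]
Everything below is deterministic given $\theta_1,\dots,\theta_l$, so it suffices to compare these; the plan is to show the $j$th numerator on the left equals $|\Phi''(e^{i\preimagemulti^{j}_l})|^{-\eta}$ times a factor independent of $j$, up to a small multiplicative error, so that the ratios agree.

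Since a slit tip is the endpoint of an analytic arc, $\Phi$ is locally two-to-one there, $\Phi'$ has a simple zero at $e^{i\preimagemulti^{j}_l}$, and its leading Taylor coefficient is $\Phi''(e^{i\preimagemulti^{j}_l})$. I would make this quantitative by the decomposition \eqref{ale-decomposition} of $\Phi$ about its $j$th slit (which holds verbatim for the multinomial map), together with \eqref{f-derivative}, \eqref{near-tip-distortion}, \eqref{general-tip-derivative} and the non-singular bounds \eqref{loewner-taylor}--\eqref{logubound} of Lemma~\ref{ucomp}, exactly as in the derivative comparison of \autoref{sec:ale-to-aux}: this yields, uniformly for $|\phi|\le\cparam^{2}$,
\[
  \Phi'\bigl(e^{\sigma+i(\preimagemulti^{j}_l+\phi)}\bigr)=\Phi''(e^{i\preimagemulti^{j}_l})\,e^{i\preimagemulti^{j}_l}\,(e^{\sigma+i\phi}-1)\,\bigl(1+O(\cparam^{-1/2}|e^{\sigma+i\phi}-1|)\bigr),
\]
the error absorbing the linear Taylor remainders of $(f^{\cparam})'$ at $1$ and of the non-singular factors $\Psi^{j}_\ell$, $\Phi_0$ at $e^{i\preimagemulti^{j}_l}$. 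Consequently
\[
  \int_{|\phi|\le\cparam^{2}}\bigl|\Phi'(e^{\sigma+i(\preimagemulti^{j}_l+\phi)})\bigr|^{-\eta}\d\phi=|\Phi''(e^{i\preimagemulti^{j}_l})|^{-\eta}\int_{|\phi|\le\cparam^{2}}|e^{\sigma+i\phi}-1|^{-\eta}\d\phi+E_j,
\]
where $|E_j|\le A\cparam^{-1/2}|\Phi''(e^{i\preimagemulti^{j}_l})|^{-\eta}\int_{|\phi|\le\cparam^{2}}|e^{\sigma+i\phi}-1|^{1-\eta}\d\phi$, and the substitution $\phi=\sigma x$, using $|e^{\sigma+i\phi}-1|^{2}=(e^{\sigma}-1)^{2}+4e^{\sigma}\sin^{2}(\phi/2)$, gives $\int_{|\phi|\le\cparam^{2}}|e^{\sigma+i\phi}-1|^{-\eta}\d\phi=\sigma^{1-\eta}\bigl(\int_{\R}(1+x^{2})^{-\eta/2}\d x\bigr)(1+o(1))$ (finite precisely because $\eta>1$) and $E_j$ a strictly smaller power of $\sigma$; crucially the prefactor $\sigma^{1-\eta}\int_{\R}(1+x^{2})^{-\eta/2}\d x$ does not depend on $j$.

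Next I would show the remaining contribution, from $\psi$ with $|\psi-\preimagemulti^{j}_l|>\cparam^{2}$ for every $j$, is negligible: running the argument of \autoref{thm:ale-to-aux-inductive} in the present model (where each particle sits exactly at a tip, i.e.\ with cutoff $D=0$) bounds $\int_{\mathrm{far}}|\Phi'(e^{\sigma+i\psi})|^{-\eta}\d\psi$, while the auxiliary-model version of Lemma~\ref{partition-function-lower-bound} gives $\int_{\T}|\Phi'(e^{\sigma+i\psi})|^{-\eta}\d\psi\ge\cparam^{\eta}\sigma^{-(\eta-1)}$, so the far part is a fraction of the normaliser that is a vanishing power of $\cparam$ once $\sigma=\cparam^{\gamma}$ with $\gamma$ as there. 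Combining the three displays, the common prefactor cancels between numerator and denominator; since there are only $k$ tips with comparable weights, each probability is $\Theta(1)$, and one reads off
\[
  \max_{1\le j\le k}\bigl|\P(\thetaaux_{l+1}=\preimagemulti^{j}_l)-\P(\theta_{l+1}=\preimagemulti^{j}_l)\bigr|\le A\cparam^{-1}D
\]
for $\sigma$, $D$ as in the previous section, which is the claim.

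The hard part is the uniform one-term expansion of $\Phi'$ near its zero across the window $|\phi|\le\cparam^{2}$: one must control the composition of the $O(T/\cparam)$ non-singular maps $\Psi^{j}_\ell$ in \eqref{ale-decomposition} simultaneously for all such $\phi$ and verify that the resulting relative error, once integrated against the peaked weight $|e^{\sigma+i\phi}-1|^{-\eta}$ (which for $\eta$ close to $1$ still keeps an order-one share of its mass across the whole range $\sigma\lesssim|\phi|\lesssim\cparam^{2}$), is a sufficiently small power of $\cparam$. This is the same estimate already carried out in \autoref{sec:ale-to-aux}, only easier here because the two clusters being compared are literally identical, so no comparison of distinct driving functions is needed beyond what \eqref{loewner-taylor}--\eqref{logubound} supply.
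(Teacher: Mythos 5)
Your strategy is the same as the paper's — show that for each tip $j$ the integrated density near $e^{i\preimagemulti^j_l}$ equals a $j$-independent constant times $|(\Phiaux_l)''(e^{i\preimagemulti^j_l})|^{-\eta}$ up to a small relative error — but the technical route is genuinely different. The paper never touches the full alternating decomposition \eqref{ale-decomposition}: it instead writes $(\Phiaux_l)'(e^{\sigma+i(\preimagemulti^j_l+\theta)}) = (e^{\sigma+i\theta}-1)e^{i\preimagemulti^j_l}(\Phiaux_l)''(e^{i\preimagemulti^j_l}) + \int_\gamma(\cdot)(\Phiaux_l)^{(3)}$ by the fundamental theorem of calculus along the radial segment, so the relative error is $|e^{\sigma+i\theta}-1|\cdot\sup_\gamma|(\Phiaux_l)^{(3)}|/|(\Phiaux_l)''|$. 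The two derivative bounds ($|(\Phiaux_l)''|\ge A^{-1}$ and $|(\Phiaux_l)^{(3)}|\le A\cparam^{-1}$) are obtained from the \emph{minimal} three-piece decomposition $\Phiaux_l = \Phiaux_{k-1}\circ f_k\circ\Psi_k$, cutting only at the last slit-$j$ particle, together with the explicit backward-equation formula for $u_t'$ (equation (26) of \cite{stv-ale}) and the exact $f''(1)=(1+d(\cparam))/(2\sqrt{1-e^{-\cparam}})$. That is exactly how the paper avoids the step you correctly flag as the "hard part": it never has to control the $O(T/\cparam)$ non-singular factors $\Psi^j_\ell$ uniformly, because only one cut is needed and no two-driving-function comparison arises (the multinomial and de-rotated auxiliary maps coincide). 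Your proposed route through \eqref{ale-decomposition} plus Lemma~\ref{ucomp} would also work — it is the machinery of \autoref{sec:ale-to-aux} — but here it is heavier than needed. On the other hand, you explicitly treat the far part of the Voronoi cell $R_j$ (via Proposition~\ref{thm:ale-to-aux-inductive} with $D=0$ and Lemma~\ref{partition-function-lower-bound}), whereas the paper works only with $\int_{-D}^{D}$ around each tip and leaves that contribution implicit; one should be aware that the resulting $O(\cparam^2)$ far-part contribution exceeds the stated $A\cparam^{-1}D$ once $\sigma\le\cparam^6$, though Corollary~\ref{thm:aux-multi} survives in any case since $\cparam^{-1}\cdot\cparam^2\to0$.
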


\begin{corollary}
	\label{thm:aux-multi}
	We can construct the coupling of $(\thetaaux_l)_{l \leq n}$
	and $(\theta_l)_{l \leq n}$ in such a way that $\P(\tau_{\not=} < \rdown{T/\cparam}) \leq AT \cparam^{-2}D$.
\end{corollary}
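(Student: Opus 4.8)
The plan is to upgrade the one-step comparison of \autoref{thm:aux-multi-coupling} to a coupling of the whole processes by gluing together maximal couplings at each of the $\rdown{T/\cparam}$ steps, and then to control $\P[\tau_{\not=} < \rdown{T/\cparam}]$ by a union bound. Since \autoref{thm:aux-multi-coupling} only controls the conditional one-step laws on the event that the two chains have agreed so far, the natural object to track is $\tau_{\not=}$ itself, and the only loss incurred relative to the per-step estimate $A\cparam^{-1}D$ is the factor $\rdown{T/\cparam} \le T/\cparam$.

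Concretely, I would build the joint law of $((\thetaaux_l,\theta_l))_{l \le \rdown{T/\cparam}}$ inductively on a common probability space, with filtration $(\mathcal F_l)$ generated by the first $l$ pairs. At step $l+1$, on the event $\{l < \tau_{\not=} \wedge \rdown{T/\cparam}\}$ the two chains have so far agreed, so \autoref{thm:aux-multi-coupling} applies: the conditional laws of $\thetaaux_{l+1}$ and $\theta_{l+1}$ are both supported on $\{\preimagemulti_l^1,\dots,\preimagemulti_l^k\}$ and their atoms differ by at most $A\cparam^{-1}D$, hence their total variation distance is at most $kA\cparam^{-1}D$. I would couple these two one-step distributions maximally, so that $\thetaaux_{l+1} = \theta_{l+1}$ with conditional probability at least $1 - kA\cparam^{-1}D$; on the complement $\{l = \tau_{\not=}\}$, and at every step after $\tau_{\not=}$, the two chains are simply run independently, as the coupling is no longer required to do anything. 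Since $\{\tau_{\not=} = l+1\} = \{l < \tau_{\not=}\} \cap \{\thetaaux_{l+1} \ne \theta_{l+1}\}$, this gives $\P[\tau_{\not=} = l+1 \mid \mathcal F_l] \le kA\cparam^{-1}D$ almost surely, and therefore $\P[\tau_{\not=} = l+1] \le kA\cparam^{-1}D$ for every $l$ (the case $l=0$, with both chains started from the common initial cluster $\Phi_0$, is covered in the same way). Summing over $0 \le l < \rdown{T/\cparam}$ and absorbing $k$ into the constant yields $\P[\tau_{\not=} < \rdown{T/\cparam}] \le \rdown{T/\cparam}\, kA\cparam^{-1}D \le AT\cparam^{-2}D$, which is the claim.

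This is essentially routine once \autoref{thm:aux-multi-coupling} is in hand; the only two points that need a little care are that the inductive recipe genuinely defines a single coupling of the two full processes (standard, but worth a sentence), and that the total-variation bound is invoked on the event $\{l < \tau_{\not=}\}$ where its hypotheses are valid rather than unconditionally. I do not anticipate a real obstacle: the substantive work is already done in \autoref{thm:aux-multi-coupling}, and the corollary is just the accumulation of the per-step error over $\Theta(\cparam^{-1})$ steps.
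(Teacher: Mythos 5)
Your proof is correct and matches the argument the paper implicitly intends (the corollary is stated without a separate proof, as an immediate consequence of Proposition~\ref{thm:aux-multi-coupling}): maximal coupling step by step, the total-variation bound of order $k\cparam^{-1}D$ from the per-step comparison, and a union bound over $\rdown{T/\cparam}$ steps.
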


The proof of Proposition~\ref{thm:aux-multi-coupling} is relatively simple: 
since $(\Phiaux_n)'(e^{i\preimagemulti_l^j}) = 0$,
the value of $\P( \thetaaux_{l+1} = \preimagemulti_l^j )$
is asymptotically proportional to
$|(\Phiaux_n)''(e^{i\preimagemulti_l^j})|^{-\eta}$,
and hence approximates
$\P( \theta_{l+1} = \preimagemulti_l^j )$.

\begin{proof}[Proof of Proposition~\ref{thm:aux-multi-coupling}]
	We need to show
	$\int_{-D}^{D}
	|(\Phiaux_l)'(e^{\sigma + i(\preimagemulti_l^j+\theta)})|^{-\eta}
	\,\d \theta$
	is proportional to $|(\Phiaux_l)''(e^{i\preimagemulti_l^j})|^{-\eta}$.
	For $|\theta| < D$,	
	let $\gamma$ be the line segment in $\overline{\Delta}$
	from $e^{i\preimagemulti_l^j}$
	to $e^{\sigma + i(\preimagemulti_l^j+\theta)}$.
	Then by the fundamental theorem of calculus,
	\begin{align*}
		(\Phiaux_l)'(e^{\sigma + i(\preimagemulti_l^j+\theta)})
		&=
		(e^{\sigma + i\theta} - 1) e^{i\preimagemulti_l^j}
		(\Phiaux_l)''(e^{i\preimagemulti_l^j})
		+
		\int_\gamma
		(e^{\sigma + i(\preimagemulti_l^j + \theta)} - z)
		(\Phiaux_l)^{(3)}(z)
		\,\d z,
	\end{align*}
	and so
	\begin{align}
		\label{eq:aux-multi-comparison}
		\left| \log \frac{
			|(\Phiaux_l)'(e^{\sigma + i(\preimagemulti_l^j+\theta)})|
		}{
			|e^{\sigma + i\theta} - 1|
			\big|(\Phiaux_l)''(e^{i\preimagemulti_l^j})\big|
		}\right|
		&\leq
		\frac{
			|e^{\sigma + i\theta} - 1|
			\sup_{z \in \gamma} | (\Phiaux_l)^{(3)}(z) |
		}{
			|(\Phiaux_l)''(e^{i\preimagemulti_l^j})|
		}.
	\end{align}
	We can decompose $\Phiaux_l$
	as $\Phiaux_{k-1} \circ f_k \circ \Psi_k$,
	where $1 \leq k \leq l$ is the last time a particle
	was attached to slit $j$
	(if no particle has been attached we can regard the top
	part of the initial slit as a particle of capacity $\cparam$,
	so without loss of generality we can assume at least one particle
	has been added to each slit).
	For $u$ satisfying the backwards equation \eqref{eq:backwards-equation}
	with driving function $\xi$ on $[0,T]$,
	equation (26) of \cite{stv-ale} gives an expression for the spatial derivative
	\[
	u_t'(z)
	=
	\exp\left(
	t - \int_0^t \frac{2 \,\d s}{(u_s(z)e^{-i\xi_{T-s}}-1)^2}
	\right).
	\]
	Using this expression and its higher spatial derivatives,
	we find that
	for a constant $A$ depending only on $\eta$, $T$ and the initial conditions,
	we have bounds
	$A^{-1} \leq |\Psi_k'(z)| \leq A$,
	$|\Psi_k''(z)| \leq A$,
	$|\Psi_k^{(3)}(z)| \leq A$
	for all $z \in \gamma$.
	Then by the chain rule,
	and as $f_k'(\Psi_k(e^{i\preimageale_l^j})) = 0$,
	\begin{align*}
		|(\Phiaux_l)''(e^{i\preimageale_l^j})|
		&=
		\left|
		\Psi_k''(e^{i\preimageale_l^j})
		(\Phiaux_{k-1} \circ f_k)'(\Psi_k(e^{i\preimageale_l^j}))
		+
		(\Psi_k'(e^{i\preimageale_l^j}))^2
		(\Phiaux_{k-1} \circ f_k)''(\Psi_k(e^{i\preimageale_l^j}))
		\right|\\
		&\geq
		A^{-2}
		\left|
		(\Phiaux_{k-1} \circ f_k)''(\Psi_k(e^{i\preimageale_l^j}))
		\right|\\
		&=
		A^{-2}
		|f_k''(\Psi_k(e^{i\preimageale_l^j}))|
		\times
		|(\Phiaux_{k-1})'(f_k(\Psi_k(e^{i\preimageale_l^j})))|.
	\end{align*}
	Since
	$\Psi_k(e^{i\preimageale_l^j}) = e^{i\thetaaux_k}$,
	\begin{equation}
		\label{eq:f-second-derivative}
		|f_k''(\Psi_k(e^{i\preimageale_l^j}))|
		=
		f''(1)
		=
		\frac{1 + d(\cparam)}{2\sqrt{1-e^{-\cparam}}}
		\geq
		\frac{\cparam^{-1/2}}{4},
	\end{equation}
	and $|f_k(\Psi_k(e^{i\preimageale_l^j}))| - 1 \geq \cparam^{1/2}$,
	so $|(\Phiaux_{k-1})'(f_k(\Psi_k(e^{i\preimageale_l^j})))|
	\geq \frac{\cparam^{1/2}}{A}$.
	Hence we have a constant lower bound on
	$|(\Phiaux_l)''(e^{i\preimageale_l^j})|$,
	and only need an upper bound on
	$\sup_{z \in \gamma} | (\Phiaux_l)^{(3)}(z) |$.
	By repeated application of the chain rule with the same decomposition
	of $\Phiaux_l$,
	similarly to the above we find
	\[
	| (\Phiaux_l)^{(3)}(z) | \leq A \cparam^{-1}
	\]
	for all $z \in \gamma$.
	Hence the right-hand side of \eqref{eq:aux-multi-comparison}
	is almost surely bounded by
	$A \cparam^{-1} D$,
	and so
	$
	\P( \thetaaux_{l+1} = \preimagemulti_l^j )
	=
	e^{O(D/\cparam)}
	\P( \theta_{l+1} = \preimagemulti_l^j ).
	$
\end{proof}

\section{Multinomial to Laplacian path model}
\label{sec:singular-methods}

The step from the multinomial model
to the LPM is the most difficult
and technically involved part of the proof
of Theorem~\ref{thm:ale-lpm},
so deserves its own section.
We must track the movement of the preimages of the tips
in the two models,
which are governed by Loewner's equation
close to the driving function
where it is singular,
and the second derivative of the maps $\Phi_n^{\mathrm{multi}}$
and $\Phi_{t}^{\mathrm{LPM}}$,
which are governed by the second spatial derivative
of the same singular PDE.

As the Laplacian path model is generated by a driving \emph{measure}
rather than a function, we will first specify what we mean by convergence
of driving measures.

\begin{definition}
	\label{def:wasserstein}
	Given a metric space $(X,d)$,
	and two Borel measures $\mu_1, \mu_2$ on $X$,
	the bounded Wasserstein distance
	$d_{\mathrm{BW}}(\mu_1, \mu_2)$
	is defined
	\[
	d_{\mathrm{BW}}(\mu_1, \mu_2)
	=
	\sup_{\varphi \in \mathcal{H}}
	\left|
	\int_{X} \varphi \,\d\mu_1
	-
	\int_{X} \varphi \,\d\mu_2
	\right|,
	\]
	where $\mathcal{H} = \{ \varphi \in C(X) : \| \varphi \|_{\mathrm{Lip}} + \| \varphi \|_{\infty} \leq 1 \}$,
	for
	\[
	\| \varphi \|_{\mathrm{Lip}} = \sup_{x \not= y} \frac{|\varphi(x) - \varphi(y)|}{d(x,y)},
	\quad
	\| \varphi \|_{\infty} = \sup_{x} |\varphi(x)|.
	\]
\end{definition}

\begin{proposition}
	\label{thm:weak-convergence}
	Let $(X,d)$ be a separable metric space,
	then $d_{\mathrm{BW}}$ metrises weak convergence of finite measures on $X$,
	i.e.\ if $\mu$, $\mu_n$ for $n \in \N$ are finite measures on $X$,
	then $\mu_n \Rightarrow \mu$ if and only if $d_{\mathrm{BW}}(\mu_n, \mu) \to 0$ as $n \to \infty$.
\end{proposition}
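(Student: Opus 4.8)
This is a classical fact about the bounded--Lipschitz (Dudley) metric; see e.g.\ Dudley, \emph{Real Analysis and Probability}. I would prove the two implications separately, invoking the portmanteau characterisation for finite Borel measures: $\mu_n \Rightarrow \mu$ if and only if $\mu_n(X) \to \mu(X)$ and $\limsup_n \mu_n(F) \le \mu(F)$ for every closed $F \subseteq X$. For the direction ``$d_{\mathrm{BW}}(\mu_n,\mu) \to 0 \implies \mu_n \Rightarrow \mu$'': testing against constants $\varphi \equiv c$ with $|c| \le 1$ (these lie in $\mathcal{H}$ since $\|\varphi\|_{\mathrm{Lip}} = 0$) gives $\mu_n(X) \to \mu(X)$, and for any nonzero bounded Lipschitz $\varphi$ the rescaled function $\varphi/(\|\varphi\|_\infty + \|\varphi\|_{\mathrm{Lip}}) \in \mathcal{H}$, so $\int \varphi\,\d\mu_n \to \int \varphi\,\d\mu$. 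To reach closed sets, fix a closed $F$ and $\delta > 0$ and apply this to $\varphi_\delta(x) = \max(0, 1 - d(x,F)/\delta)$, which satisfies $\mathbf{1}_F \le \varphi_\delta \le \mathbf{1}_{\{d(\cdot,F) < \delta\}}$; this yields $\limsup_n \mu_n(F) \le \mu(\{d(\cdot,F) < \delta\})$, and letting $\delta \downarrow 0$ with continuity from above of the finite measure $\mu$ gives $\limsup_n \mu_n(F) \le \mu(F)$. The portmanteau theorem then gives $\mu_n \Rightarrow \mu$.

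For the converse ``$\mu_n \Rightarrow \mu \implies d_{\mathrm{BW}}(\mu_n,\mu) \to 0$'', the plan has three ingredients. First, reduce to the case that $X$ is Polish: embed $X$ isometrically in its completion $\bar X$ and regard each measure as supported on $X \subseteq \bar X$; weak convergence persists (every $g \in C_b(\bar X)$ restricts to $C_b(X)$), and $d_{\mathrm{BW}}$ is unchanged, since each element of $\mathcal{H}(X)$ extends uniquely, with the same two norms, to an element of $\mathcal{H}(\bar X)$ by uniform continuity, while restriction $\mathcal{H}(\bar X) \to \mathcal{H}(X)$ increases neither norm. Second, weak convergence makes the masses $\mu_n(X)$ bounded by some $M < \infty$, so Prokhorov's theorem on the Polish space $\bar X$ makes $\{\mu_n\}_n \cup \{\mu\}$ tight: for each $\varepsilon > 0$ there is a compact $K$ with $\sup_n \mu_n(X \setminus K) \le \varepsilon$ and $\mu(X \setminus K) \le \varepsilon$. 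Third, the restrictions to $K$ of functions in $\mathcal{H}$ are uniformly bounded and $1$-Lipschitz, hence totally bounded in $C(K)$ by Arzel\`a--Ascoli; choosing a finite $\varepsilon$-net $\psi_1,\dots,\psi_m$ for them and extending each $\psi_i$ to a function in $C_b(X)$ (Tietze), then splitting each integral over $K$ and $X\setminus K$ and approximating $\varphi|_K$ by the nearest $\psi_i$, one obtains, uniformly over $\varphi \in \mathcal{H}$,
\[
	\Big| \int \varphi\,\d\mu_n - \int \varphi\,\d\mu \Big|
	\le
	\max_{1 \le i \le m} \Big| \int \psi_i\,\d\mu_n - \int \psi_i\,\d\mu \Big|
	+ C(M)\,\varepsilon .
\]
The finite maximum tends to $0$ by weak convergence, so $\limsup_n d_{\mathrm{BW}}(\mu_n,\mu) \le C(M)\varepsilon$, and letting $\varepsilon \downarrow 0$ closes the argument. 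If the full topological phrasing ``metrises'' is desired rather than the sequential statement, one adds that separability of $X$ makes the weak topology on finite measures metrisable, and two metrisable topologies with the same convergent sequences coincide.

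The only genuine difficulty is this converse, and within it the passage to the completion: on a merely separable (non-complete) metric space an individual finite measure need not be tight, so Prokhorov cannot be applied directly; embedding in $\bar X$ is precisely what makes tightness available, and tightness is what lets Arzel\`a--Ascoli collapse the supremum over the infinite class $\mathcal{H}$ to a maximum over finitely many fixed test functions $\psi_i$. Everything past that point is routine bookkeeping with the splitting estimate and the constants.
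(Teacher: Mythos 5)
The paper does not actually prove this proposition; it simply cites Theorem~11.3.3 of Dudley's \emph{Real Analysis and Probability}. Your proposal gives a correct and essentially self-contained proof along the standard lines, so there is nothing to compare it against in the paper beyond the citation.

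A few remarks on the argument itself. The forward direction is exactly right: testing against constants gives convergence of total masses, rescaling gives convergence against all bounded Lipschitz functions, and the functions $\varphi_\delta(x) = \max(0, 1 - d(x,F)/\delta)$ squeeze down onto $\mathbf{1}_F$ as $\delta \downarrow 0$ by finiteness of $\mu$, so the portmanteau theorem (in its finite-measure form, which requires the mass condition you state) applies. In the converse, your instinct to pass to the completion $\bar X$ is the key non-obvious move, and your explanation of \emph{why} it is needed --- individual finite Borel measures on a merely separable, non-complete metric space need not be tight --- is exactly the right thing to flag. One small technical point: Prokhorov's theorem is usually stated for probability measures, so strictly one should first dispose of the degenerate case $\mu = 0$ and then normalise each $\mu_n$ by its (eventually bounded and bounded away from zero) total mass before invoking tightness of a weakly convergent sequence on a Polish space; this is routine bookkeeping and does not affect the structure. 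The Arzel\`a--Ascoli net argument and the splitting estimate over $K$ and $K^c$ are correct, and your final line on deducing the topological statement from the sequential one via metrisability of both topologies is a fair way to close, though for the purposes of the paper the sequential statement is all that is used.
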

\begin{proof}
	See Theorem 11.3.3 of \cite{dudley}.
\end{proof}

\begin{remark}
	Weak convergence $\mu_n \Rightarrow \mu$
	of measures on $X$ can also be implied by convergence
	$\int_X \varphi \,\d \mu_n \to \int_X \varphi \,\d \mu$
	for bounded continuous functions $\varphi$.
	We use the smaller space of test functions
	$\varphi \in \mathcal{H}$ in this section because
	it is substantially easier to prove convergence
	of the integrals for our two measures in this case,
	as we make use of the bound on $\| \varphi \|_{\mathrm{Lip}}$
	in the proof of Corollary \ref{thm:weak-conv-multi-lpm},
	and this still suffices to imply weak convergence of the measures.
\end{remark}

\begin{remark}
	We can view each driving measure $(\mu_t)_{t \in [0,T]}$
	as a single probability measure $\mu$ on the cylinder $S = \T \times [0,T]$
	given by $\mu_t \otimes m_{[0,T]}$
	where $m_{[0,T]}$ is normalised Lebesgue measure on $[0,T]$.
	Then by Proposition 1 of \cite{anisotropic-hl0},
	weak convergence of these measures on the cylinder implies convergence
	of the corresponding clusters in the Carath\'{e}odory topology.
\end{remark}

We will use Proposition~\ref{thm:weak-convergence}
to deduce convergence of the cylinder measures corresponding to
the multinomial model and LPM
by showing that the preimages of the tips are close
and the second derivatives are close at these preimages.

\begin{definition}
	\label{def:delta-x-p}
	Let $$x_j(t) := e^{i\phi^j_t}
	\quad\text{ and }
	\quad \bar x_j(t) := e^{i\bar\phi^j_t}$$
	where $\phi^j_t$ and $\bar \phi^j_t$
	are the preimages defined in the statement
	of Theorem~\ref{thm:ale-lpm}.
	
	For $t \in [0,T]$ let
	\begin{align}
		\label{eq:indicator}
		I^j_t := 1\{ \xi_t = x_j(t) \} = 1\{ \theta_{\rdown{t/\cparam}+1} = \phi^j_{\rdown{t/\cparam}\cparam} \} = I^j_{n\cparam},
	\end{align}
	for $t \in [n\cparam,(n+1)\cparam)$,
	and note that $\E( I^l_{n\cparam} | \theta_1, \dots, \theta_n) = p^l_{n\cparam}$.
	
	For all $t \leq T$ define
	\[
	\delta_\mathrm{x}(t) :=
	\sup_{s \leq t} \sum_{j=1}^{k}
	| x_j(s) - \bar x_j(s) |,
	\]
	and
	\[
	\delta_{\mathrm{p}}(t) :=
	\sup_{s \leq t} \sum_{j=1}^{k}
	\left| \Phi_s''(x_j(s)) - \bar\Phi_s''(\bar x_j(s)) \right|.
	\]
	Note that for some constant $A$,
	$|p_t^j - \bar p_t^j| \leq A \delta_{\mathrm{p}}(t)$
	for all $j \in \{1, \dots, k\}$ and all $t \leq T$.
	
	Finally, let
	\[
	\delta_{\mathrm{total}}(t) := \delta_{\mathrm{x}}(t) + \delta_{\mathrm{p}}(t).
	\]
\end{definition}

\begin{theorem}
	\label{thm:multi-to-lpm}
	Under the same assumptions as in Theorem~\ref{thm:ale-lpm},
	there exists a universal constant $R$ and a constant $A = A(T,L,k,K_0,\eta)$
	such that
	\begin{align*}
		\P\left[
		\delta_\mathrm{total}(T)
		\leq A\cparam^{1/2R}
		\right]
		\to 1
	\end{align*}
	as $\cparam \to 0$.
\end{theorem}

\begin{corollary}
	\label{thm:weak-conv-multi-lpm}
	If $\mu_t = \delta_{\theta_{\rdown{t/\cparam}+1}}$ for the multinomial model
	and $\bar \mu_t = \sum_{j=1}^{k} \bar p^j_t \delta_{\bar \phi^j_t}$
	for the Laplacian path model,
	then
	\begin{align*}
		d_{\mathrm{BW}}(\mu_t \otimes m_{[0,T]}, \bar\mu_t \otimes m_{[0,T]})
		\to 0
	\end{align*}
	in probability as $\cparam \to 0$.
	Hence $\mu_t \otimes m_{[0,T]}$ converges in distribution,
	as a random element of the space of measures on $S = \T \times [0,T]$,
	to $\bar \mu_t \otimes m_{[0,T]}$.
\end{corollary}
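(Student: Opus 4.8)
The plan is to fix a single test function $\varphi \in \mathcal{H}$, estimate $\bigl| \int_S \varphi \, \d(\mu_t \otimes m_{[0,T]}) - \int_S \varphi \, \d(\bar\mu_t \otimes m_{[0,T]}) \bigr|$ through a short chain of approximations, and then upgrade the pointwise-in-$\varphi$ estimate to a bound on the supremum $d_{\mathrm{BW}}$ by a compactness argument over $\mathcal{H}$. Write $N = \rdown{T/\cparam}$ and $I_n = [n\cparam, (n+1)\cparam)$, so that up to the final piece $[N\cparam, T)$ (of length $< \cparam$, contributing $O(\cparam)$ to everything below) we have $T\int_S \varphi\,\d(\mu_t\otimes m_{[0,T]}) = \sum_{n=0}^{N-1} \int_{I_n} \varphi(\theta_{n+1}, t)\,\d t$. \textbf{Step 1 (freeze time on each block).} Since $\|\varphi\|_{\mathrm{Lip}} \le 1$ and $|I_n| = \cparam$, replacing $\varphi(\theta_{n+1}, t)$ by $\varphi(\theta_{n+1}, n\cparam)$ on $I_n$ costs at most $\int_{I_n} |t - n\cparam|\,\d t \le \cparam^2/2$ per block, hence $\le T\cparam/2$ in total. \textbf{Step 2 (martingale concentration).} Let $\mathcal{F}_n = \sigma(\theta_1,\dots,\theta_n)$. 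By construction of the multinomial model, conditionally on $\mathcal{F}_n$ the variable $\theta_{n+1}$ equals the tip preimage $\phi^j_{n\cparam}$ with probability $p^j_{n\cparam}$, so $\E[\varphi(\theta_{n+1}, n\cparam)\mid\mathcal{F}_n] = \sum_{j=1}^k p^j_{n\cparam}\,\varphi(\phi^j_{n\cparam}, n\cparam)$. Then $M_n := \cparam\sum_{m=0}^{n-1}\bigl(\varphi(\theta_{m+1},m\cparam) - \E[\varphi(\theta_{m+1},m\cparam)\mid\mathcal{F}_m]\bigr)$ is a martingale with increments bounded by $2\cparam\|\varphi\|_\infty \le 2\cparam$, so Azuma--Hoeffding gives $\P\bigl(\max_{n\le N}|M_n| > \cparam^{1/3}\bigr) \le 2\exp\bigl(-\cparam^{2/3}/(8T\cparam)\bigr) \to 0$ as $\cparam \to 0$ (using $N\cparam \le T$). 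Hence, outside an event of probability $o(1)$, $\sum_n \cparam\,\varphi(\theta_{n+1},n\cparam) = \sum_n \cparam \sum_j p^j_{n\cparam}\varphi(\phi^j_{n\cparam},n\cparam) + O(\cparam^{1/3})$.

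\textbf{Step 3 (replace multinomial by LPM quantities).} On the event of Theorem~\ref{thm:multi-to-lpm}, whose probability tends to $1$, $\sum_j(|e^{i\phi^j_t}-e^{i\bar\phi^j_t}| + |p^j_t-\bar p^j_t|) \le A\cparam^{1/2R}$ for all $t \le T$; since the chordal distances $|e^{i\phi^j_t}-e^{i\bar\phi^j_t}|$ are comparable to the $\T$-distances and $\|\varphi\|_{\mathrm{Lip}}, \|\varphi\|_\infty \le 1$, this yields $\bigl|\sum_j p^j_{n\cparam}\varphi(\phi^j_{n\cparam},n\cparam) - \sum_j\bar p^j_{n\cparam}\varphi(\bar\phi^j_{n\cparam},n\cparam)\bigr| = O(\cparam^{1/2R})$ for each $n$, and a total error $O(\cparam^{1/2R})$. \textbf{Step 4 (Riemann sum).} By Proposition~\ref{thm:multi-fixed-crude} the maps $t\mapsto e^{i\bar\phi^j_t}$ and $t\mapsto\bar p^j_t$ are Lipschitz on $[0,T]$, hence so is $t\mapsto\sum_j\bar p^j_t\varphi(\bar\phi^j_t,t)$, and therefore $\sum_n\cparam\sum_j\bar p^j_{n\cparam}\varphi(\bar\phi^j_{n\cparam},n\cparam) = \int_0^T\sum_j\bar p^j_t\varphi(\bar\phi^j_t,t)\,\d t + O(\cparam) = T\int_S\varphi\,\d(\bar\mu_t\otimes m_{[0,T]}) + O(\cparam)$. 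Dividing by $T$ and combining Steps 1--4, there is an event $E_\cparam(\varphi)$ with $\P(E_\cparam(\varphi)) \to 1$ on which $\bigl|\int_S\varphi\,\d(\mu_t\otimes m_{[0,T]}) - \int_S\varphi\,\d(\bar\mu_t\otimes m_{[0,T]})\bigr| \le C(\cparam^{1/3}+\cparam^{1/2R})$, where $C$ is independent of $\varphi$ and $E_\cparam(\varphi)$ depends on $\varphi$ only through the Azuma event of Step 2.

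To get the uniform bound in the definition of $d_{\mathrm{BW}}$, fix $\epsilon > 0$. Since $S = \T\times[0,T]$ is compact and $\mathcal{H}$ is uniformly bounded and equi-Lipschitz, Arzel\`a--Ascoli shows $\mathcal{H}$ is totally bounded in $C(S)$, so there is a finite $\epsilon$-net $\varphi_1,\dots,\varphi_M \in \mathcal{H}$ with $M$ depending on $\epsilon$ only. For any $\varphi \in \mathcal{H}$ and any probability measures $\nu_1,\nu_2$ on $S$, $\bigl|\int\varphi\,\d\nu_1 - \int\varphi\,\d\nu_2\bigr| \le 2\epsilon + \max_{i\le M}\bigl|\int\varphi_i\,\d\nu_1-\int\varphi_i\,\d\nu_2\bigr|$, so $d_{\mathrm{BW}}(\mu_t\otimes m_{[0,T]}, \bar\mu_t\otimes m_{[0,T]}) \le 2\epsilon + \max_{i\le M}\bigl|\int\varphi_i\,\d(\mu_t\otimes m_{[0,T]}) - \int\varphi_i\,\d(\bar\mu_t\otimes m_{[0,T]})\bigr|$. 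Intersecting the finitely many events $E_\cparam(\varphi_i)$ (still of probability $\to 1$) and applying Steps 1--4 to each $\varphi_i$, the maximum is $O_P(\cparam^{1/3}+\cparam^{1/2R}) \to 0$; letting $\cparam \to 0$ and then $\epsilon \to 0$ gives $d_{\mathrm{BW}}(\mu_t\otimes m_{[0,T]}, \bar\mu_t\otimes m_{[0,T]}) \to 0$ in probability. Finally, as recalled above $d_{\mathrm{BW}}$ metrises weak convergence of finite measures on the separable space $S$, and convergence in probability to the \emph{deterministic} limit $\bar\mu_t\otimes m_{[0,T]}$ is equivalent to convergence in distribution; this proves the last assertion.

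The main obstacle is the tension between Step 2, where the Azuma bound handles only one fixed test function at a time, and the supremum over $\mathcal{H}$ built into $d_{\mathrm{BW}}$: the fix is the Arzel\`a--Ascoli discretisation of $\mathcal{H}$ above, though a chaining argument would also work. Once that is in place, Steps 1, 3 and 4 are routine consequences of the Lipschitz estimates already established in Proposition~\ref{thm:multi-fixed-crude} together with the closeness of the multinomial and LPM parameters from Theorem~\ref{thm:multi-to-lpm}.
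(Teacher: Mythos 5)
Your proposal is correct and reaches the conclusion by a genuinely different route from the paper. Both arguments open the same way: decompose $\int\varphi\,\d\mu - \int\varphi\,\d\bar\mu$ into a martingale piece $\sum_j\int(I^j_t-p^j_t)\varphi(\cdot)$ plus two error pieces controlled by Theorem~\ref{thm:multi-to-lpm} and the Lipschitz property of $\varphi$. The essential difficulty — that the martingale event a priori depends on $\varphi$, while $d_{\mathrm{BW}}$ takes a supremum over $\varphi\in\mathcal{H}$ — is where the two proofs diverge. The paper discretises \emph{time}: it groups the $T/\cparam$ steps into $N\ll\cparam^{-1/4}$ blocks, freezes $\varphi(x_j(\cdot))$ at each block endpoint (costing $O(T^2/N)$ via Lipschitz continuity of both $\varphi$ and $t\mapsto x_j(t)$), and thereby factors the test function out of the martingale sum entirely, so that the resulting high-probability bound on $\cparam\sum(I^j-p^j)$ per block is manifestly independent of $\varphi$ and passes through the supremum for free. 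You instead discretise the \emph{function space}: Azuma gives the martingale bound for each fixed $\varphi$, and you then exploit the total boundedness of $\mathcal{H}\subset C(S)$ (Arzel\`a--Ascoli on the compact cylinder $S$) to pass to a finite $\epsilon$-net and union-bound. Both are sound; the paper's device is slightly more economical (one union bound over $N$ blocks, no net), while yours is more modular and follows a more familiar template (pointwise concentration plus covering). A small presentational nit: you write $O_P(\cparam^{1/3}+\cparam^{1/2R})$ but the two approximations enter additively on distinct events (the Azuma event and the Theorem~\ref{thm:multi-to-lpm} event), so strictly one should first fix $\delta>0$, choose $\epsilon=\delta/4$, and then send $\cparam\to 0$ with the $\epsilon$-net fixed — which is exactly what you sketch in the last sentence of the penultimate paragraph, so no real gap.
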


To derive Corollary~\ref{thm:weak-conv-multi-lpm}
from Theorem~\ref{thm:multi-to-lpm},
we also use Proposition~\ref{thm:multi-fixed-crude},
which we will prove in the following section.

\begin{proof}[Proof of Corollary~\ref{thm:weak-conv-multi-lpm}]
	Let $\varphi \in \mathcal{H}$. For simplicity assume that $T$
	is an integer multiple of $\cparam$.
	Then
	\begin{align*}
		\int_0^T &\int_{\T} \varphi \,\d \mu_t\,\d t
		- \int_0^T \int_{\T} \varphi \,\d \bar\mu_t\,\d t
		=
		\sum_{j=1}^k \int_0^T
		\left(
		I^j_t \varphi(x_j(t)) - \bar p^j_t \varphi( \bar x_j(t) )
		\right)
		\d t\\
		&=
		\sum_{j=1}^k \int_0^T
		\left(
		(I^j_t - p^j_t) \varphi(x_j(t))
		+
		(p^j_t - \bar p^j_t) \varphi(x_j(t))
		+
		\bar p^j_t (\varphi(x_j(t)) - \varphi(\bar x_j(t)))
		\right)
		\d t.
	\end{align*}
	The latter two terms are small with high probability
	using Theorem~\ref{thm:multi-to-lpm},
	so we only need to bound the ``martingale'' terms $\int_0^T (I^j_t - p^j_t)\varphi(x_j(t))\,\d t$ for each $j$.
	We can write each term as
	\begin{align*}
		\sum_{n=0}^{\frac{T}{\cparam}-1}
		\left[
		(I^j_{n\cparam}-p^j_{n\cparam})
		\int_{n\cparam}^{(n+1)\cparam}
		\varphi(x_j(t))
		\,\d t
		+
		\int_{n\cparam}^{(n+1)\cparam}
		(p^j_{n\cparam} - p^j_t)\varphi(x_j(t))
		\,\d t
		\right].
	\end{align*}
	Since $|p^j_{n\cparam} - p^j_t| \leq A \cparam$
	by Proposition~\ref{thm:multi-fixed-crude},
	we have $|\sum_{n=0}^{\frac{T}{\cparam}-1} \int_{n\cparam}^{(n+1)\cparam} (p^j_{n\cparam} - p^j_t) \varphi(x_j(t))\,\d t| \leq AT\cparam$.
	For the remaining term, we would like to simply compute second moments and
	so show the term is small,
	but need some way of doing this uniformly in $\varphi$.
	Choose a large $N$,
	and for simplicity assume that $T/\cparam$ is an integer multiple of $N$.
	Then write
	\begin{align*}
		&\sum_{m=1}^{N} \sum_{n=\frac{(m-1)T}{\cparam N}}^{\frac{mT}{\cparam N}-1}
		(I^j_{n\cparam} - p^j_{n\cparam})\int_{n\cparam}^{(n+1)\cparam}\varphi(x_j(t))\,\d t\\
		&=
		\sum_{m=1}^{N} \cparam\varphi(x_j(mT/N)) \sum_{n=\frac{(m-1)T}{\cparam N}-1}^{\frac{mT}{\cparam N}}
		(I^j_{n\cparam} - p^j_{n\cparam})
		+ \delta,
	\end{align*}
	where $|\delta| \leq \frac{AT^2}{N}$ using $\| \varphi\|_{\mathrm{Lip}}\leq 1$.
	Now we will be able to find a bound independent of $\varphi$, as
	\begin{align*}
		\E \left(
		\cparam
		\sum_{n=\frac{(m-1)T}{\cparam N}}^{\frac{mT}{\cparam N}-1}
		(I^j_{n\cparam} - p^j_{n\cparam})
		\right)^2
		&\leq
		4\cparam^2 \frac{T}{\cparam N}
		= \frac{4T\cparam}{N}
	\end{align*}
	by conditional independence of the increments $I^j_{n\cparam} - p^j_{n\cparam}$.
	Define the event
	\begin{align*}
		E_m = \left\{
		\left|
		\cparam
		\sum_{n=\frac{(m-1)T}{\cparam N}-1}^{\frac{mT}{\cparam N}}
		(I^j_{n\cparam} - p^j_{n\cparam})
		\right|
		\geq
		\frac{2\sqrt{T}\cparam^{1/4}}{N}
		\right\},
	\end{align*}
	then $\P(E_m) \leq \cparam^{1/2}N$,
	and so
	\begin{align*}
		\P \left(
		\sup_{\varphi \in \mathcal{H}}
		\left|
		\sum_{m=1}^{N} \cparam\varphi(x_j(mT/N)) \sum_{n=\frac{(m-1)T}{\cparam N}-1}^{\frac{mT}{\cparam N}}
		(I^j_{n\cparam} - p^j_{n\cparam})
		\right|
		\geq 2\sqrt{T} \cparam^{1/4}
		\right)
		&\leq
		\P\left( \bigcup_{m=1}^N E_m \right)\\
		&\leq
		\cparam^{1/2}N^2.
	\end{align*}
	We chose $N$ arbitrarily,
	so if $1 \ll N \ll \cparam^{-1/4}$,
	we have
	\begin{align*}
		\sup_{\varphi \in \mathcal{H}} \left|
		\int_0^T \int_{\T} \varphi \,\d \mu_t \, \d t
		-
		\int_0^T \int_{\T} \varphi \,\d \bar\mu_t \, \d t
		\right|
		\to 0
	\end{align*}
	in probability, as required.
\end{proof}

Our proof of Theorem~\ref{thm:multi-to-lpm}
has two parts:
first we show in Proposition~\ref{thm:multi-fixed-crude}
that $t \mapsto (x_j(t), \bar x_j(t), p^j_t, \bar p^j_t)$
is Lipschitz on $[0,T]$, which will allow us to deduce global bounds
on $|x_j(t) - \bar x_j(t)|$ and $|p^j_t - \bar p^j_t|$
by showing that they are close at a sufficiently dense set of times.

Then in Proposition~\ref{thm:x-gronwall-bound}
and Proposition~\ref{thm:p-gronwall-bound},
we find bounds on $\delta_{\mathrm{p}}(t)$
and $\delta_{\mathrm{x}}(t)$ for $t \in [0,T]$
which we can turn into a bound on $\delta_{\mathrm{total}}(T)$
using a version of Gr\"{o}nwall's inequality.

\subsection{Lipschitz bound on the locations and probabilities}

\begin{proposition}
	\label{thm:multi-fixed-crude}
	There exists a constant $A = A(k, K_0, T, \eta)$
	such that,
	almost surely,
	for $0 \leq t, s \leq T$,
	\begin{align*}
		| x_j(t) - x_j(s) | \leq A |t-s|
		\quad\text{ and }\quad
		| p_t^j  -  p_s^j | \leq A |t-s|,
	\end{align*}
	for all $j = 1, \dots, k$
	in the multinomial model.
	For the Laplacian path model, for all $j$ and for $0 \leq t,s \leq T$,
	we also have
	\begin{align*}
		| \bar x_j(t) - \bar x_j(s) | \leq A |t-s|
		\quad\text{and}\quad
		| \bar p_t^j  - \bar p_s^j  | \leq A |t-s|.
	\end{align*}
\end{proposition}

\begin{proof}
	Equation (2.7) of \cite{lpm}
	gives a useful expression for the evolution of $x_j(t)$
	over time:
	\begin{align}
		\label{eq:lipschitz-preimage-deriv}
		\frac{\d}{\d t} x_j(t)
		&=
		\left\{\begin{matrix}
			0 & \text{if $\xi_t = x_j(t)$,}\\
			- x_j(t) \frac{x_j(t) + x_l(t)}{x_j(t) - x_l(t)}
			& \text{if $\xi_t = x_l(t)$ for $l \not= j$}
		\end{matrix}\right.,
	\end{align}
	and so as the denominator is bounded below by $L = L(k,K_0,T,\eta)$, we have
	\begin{align*}
		|x_j(t) - x_j(s)| \leq \frac{2}{L} | t-s |.
	\end{align*}
	
	Controlling how $\Phi_t''(x_j(t))$
	changes over time will involve a spatial Taylor expansion.
	Therefore we need to control
	the third derivative,
	so first we show that the third derivative
	at $x_j(t)$ takes a particularly convenient form
	if $\xi_{t-} = x_j(t)$.
	If $f_{s,x}(z)$ attaches a particle of capacity $s > 0$ at $x \in \T$,
	then we can calculate from \eqref{f-derivative}
	that $f_{s,1}''(1) = \frac{1 + d(s)}{2\sqrt{1-e^{-s}}}$
	and $f_{s,1}^{(3)}(1) = \frac{-3(1 + d(s))}{2\sqrt{1-e^{-s}}} = -3f_{s,1}''(1)$,
	therefore $f_{s,x}^{(3)}(x) = -\frac{3}{x}f_{s,x}''(x)$.
	Suppose $\Phi_t$ can be written as $\Phi_{t-s} \circ f_{s,x_j(t)}$
	for some $s \in (0,\cparam]$,
	then in this case we have $x_j(t-s) = x_j(t)$
	and $\Phi_{t-s}'(x_j(t)) = 0$.
	Using the chain rule to calculate $(\Phi_{t-s} \circ f_{s,x_j(t)})''(x_j(t))$
	and $(\Phi_{t-s} \circ f_{s,x_j(t)})^{(3)}(x_j(t))$,
	we also obtain $\Phi_t^{(3)}(x_j(t)) = -\frac{3 \Phi_t''(x_j(t))}{x_j(t)}$.
	
	Suppose $t,t+\delta \in [m\cparam, (m+1)\cparam]$ with $\delta > 0$.
	We will show that
	$|\Phi_{t+\delta}''(x_j(t+\delta)) - \Phi_{t}''(x_j(t))| = O(\delta)$,
	which is sufficient to show that $t \mapsto \Phi_t''(x_j(t))$ is Lipschitz on $[0,T]$.
	There are two cases: either (a) $\xi_s = x_j(s)$
	for all $s \in [m\cparam, (m+1)\cparam)$
	or (b) $\xi_s = x_l(s)$ for some $l \not= j$ over the same interval.
	
	In case (a),
	$\Phi_{t+\delta} = \Phi_t \circ f_{\delta,x_j(t)}$.
	Although we do not necessarily have 
	$\Phi_{t}^{(3)}(x_j(t)) = -\frac{3 \Phi_{t}''(x_j(t))}{x_j(t)}$,
	we \emph{do} have
	$\Phi_{t+\delta^2}^{(3)}(x_j(t)) = -\frac{3 \Phi_{t+\delta^2}''(x_j(t))}{x_j(t)}$,
	so let $t' = t + \delta^2$ and $\delta' = \delta - \delta^2$, then write
	$\Phi_{t+\delta} = \Phi_{t'} \circ f_{\delta', x_j(t)}$.
	To simplify the expressions write $x = x_j(t)$.
	Using the chain rule and the fact that $\Phi_{t'}'(x) = 0$,
	we obtain
	\begin{align}
		\nonumber
		\Phi_{t+\delta}''(x)
		&= \Phi_{t'}'(f_{\delta',x}(x))
		f_{\delta',x}''(x) + 0\\
		\label{eq:second-deriv-chain-rule}
		&= \Phi_{t'}'(f_{\delta',x}(x))
		\frac{1}{x} \frac{1 + d(\delta')}{2\sqrt{1-e^{-\delta'}}}.
	\end{align}
	Since
	$f_{\delta',x}(x) - x = x d(\delta')$,
	by Taylor's theorem,
	\begin{align*}
		\Phi_{t'}'(f_{\delta',x}(x))
		&= x d(\delta')\Phi_{t'}''(x)
		+ \frac{x^2 d(\delta')^2}{2}\Phi_{t'}^{(3)}(x)
		+ O(d(\delta')^3)\\
		&= x d(\delta') \left(1 - \frac{3}{2}d(\delta')\right)\Phi_{t'}''(x)
		+ O(d(\delta')^3).
	\end{align*}
	Then substituting this back into \eqref{eq:second-deriv-chain-rule},
	we get
	\begin{align*}
		\Phi_{t+\delta}''(x)
		&= \frac{d(\delta')(1+d(\delta'))}{2\sqrt{1-e^{-\delta'}}}
		\left(1 - \frac{3}{2}d(\delta')\right) \Phi_{t'}''(x) + O(d(\delta')^2).
	\end{align*}
	Since $d(\delta') = 2e^{\delta'}(1 + \sqrt{1 - e^{-\delta'}}) - 2
	= 2\sqrt{\delta'} + O(\delta')$,
	we can calculate
	$\frac{d(\delta')(1+d(\delta'))}{2\sqrt{1-e^{-\delta'}}}
	= 1 + 3\sqrt{\delta'} + O(\delta')$,
	and $1 - \frac{3}{2}d(\delta') = 1 - 3\sqrt{\delta'} + O(\delta')$,
	so 
	\begin{align*}
		\Phi_{t+\delta}''(x)
		&= \Phi_{t'}''(x) + O(\delta').
	\end{align*}
	Then writing $\Phi_{t'} = \Phi_t \circ f_{\delta^2,x}$
	and expanding with the chain rule and Taylor's theorem as above,
	we do not have as much cancellation and get the slightly cruder expression
	\begin{align*}
		\Phi_{t'}''(x)
		= \frac{d(\delta^2)(1+\delta^2)}{2\sqrt{1-e^{-\delta^2}}}
		\Phi_{t}''(x) + O(d(\delta^2)),
	\end{align*}
	but this is good enough. It gives us
	$\Phi_{t'}''(x_j(t)) = \Phi_t''(x_j(t)) + O(\delta)$,
	and so
	$$
	\Phi_{t+\delta}''(x_j(t)) - \Phi_t''(x_j(t)) = O(\delta).
	$$
	
	In case (b),
	by the chain rule we have, for $s \in [t,\delta+t)$,
	\begin{align*}
		\frac{\d}{\d s} \Phi_s''(x_j(s))
		&= \left.\frac{\partial}{\partial s} \Phi_s''(z) \right|_{z=x_j(s)}
		+ \Phi_s^{(3)}(x_j(s)) \frac{\d}{\d s} x_j(s).
	\end{align*}
	Differentiating \eqref{eq:loewner} twice with respect to $z$,
	the first term above is
	\begin{align*}
		\left.\frac{\partial}{\partial s} \Phi_s''(z) \right|_{z=x_j(s)}
		&=
		\Phi_s^{(3)}(x_j(s)) x_j(s) \frac{x_j(s) + \xi_s}{x_j(s) - \xi_s}
		+ \Phi_s''(x_j(s)) \left( 1 - \frac{2\xi_s^2}{(x_j(s) - \xi_s)^2} \right)\\
		&=
		-\Phi_s^{(3)}(x_j(s)) \frac{\d}{\d s} x_j(s)
		+ \Phi_s''(x_j(s)) \left( 1 - \frac{2\xi_s^2}{(x_j(s) - \xi_s)^2} \right)
	\end{align*}
	by \eqref{eq:lipschitz-preimage-deriv}.
	Therefore
	\begin{align*}
		\frac{\d}{\d s} \Phi_s''(x_j(s))
		&= \Phi_s''(x_j(s)) \left( 1 - \frac{2\xi_s^2}{(x_j(s) - \xi_s)^2} \right),
	\end{align*}
	which is bounded.
	
	We have now shown that $t \mapsto \Phi_t''(x_j(t))$ is Lipschitz
	on $[0,T]$,
	but to conclude that $t \mapsto p^j_t$ is Lipschitz
	we need $|\Phi_t''(x_j(t))|$ to be bounded below.
	This follows from the same argument used in the proof of
	Proposition~\ref{thm:aux-multi-coupling}.
\end{proof}

\subsection{Gr\"{o}nwall-type bound on the total error}
\label{sec:Gronwall}

We will prove the global bound in Theorem~\ref{thm:multi-to-lpm}
using a singular version of Gr\"{o}nwall's inequality
from \cite{singular-gronwall} applied to
$\sum_{j=1}^{k} |x_j(t)- \bar x_j(t)|$
and $\sum_{j=1}^{k} |p^j_t - \bar p^j_t|$.

\begin{proposition}
	\label{thm:x-gronwall-bound}
	Under the same conditions as Theorem~\ref{thm:multi-to-lpm},
	there exist constants $A = A(T,k,K_0,\eta) < \infty$
	such that
	\begin{align*}
		\P\left[ \delta_{\mathrm{x}}(t) \leq A\int_0^t \delta_{\mathrm{total}}(s)\,\d s + 3 \cparam^{1/4}
		\text{ for all } t \in [0,T] \right]
		\to 1
	\end{align*}
	as $\cparam \to 0$.
\end{proposition}

\begin{proposition}
	\label{thm:p-gronwall-bound}
	There exist constants $A = A(T,k,K_0,\eta) < \infty$
	and $R > 0$
	such that
	\[
	\P\left[ \delta_{\mathrm{p}}(t) \leq A\int_0^t \frac{\delta_{\mathrm{total}}(s)}{(t-s)^{51/100}}\d s + A \cparam^{1/2R}
	\text{ for all } t \in [0,T]
	\right] \to 1
	\]
	as $\cparam \to 0$.
\end{proposition}

We have stated these separately because
Proposition~\ref{thm:x-gronwall-bound}
is much simpler to prove,
while Proposition~\ref{thm:p-gronwall-bound}
will be divided into several steps.

\begin{remark}
	In the arguments below we apply Loewner's equation to points
	on the boundary of our domain,
	although in \eqref{eq:loewner-with-measure} we said that it applies
	to points in the \emph{open} exterior disc.
	We could replace the points on the boundary with extremely close
	points in $\Delta$ and then take a limit
	to resolve this formally,
	and working on the boundary directly
	can be seen as a shorthand for this procedure.
\end{remark}

\begin{proof}[Proof of Proposition~\ref{thm:x-gronwall-bound}]
	Throughout the proof, 
	$A$ represents a constant
	which may change from line to line,
	but all occurrences have a common upper bound
	depending only on $T$, $L$, $k$, $K_0$ and $\eta$.
	
	Equation (2.7) in \cite{lpm} shows that
	the movement of the preimages $\bar \phi^j_t$ in the Laplacian path model is
	determined by
	\begin{align}
		\label{eq:lpm-preimage-movement}
		\frac{\d}{\d t} \bar x_j(t)
		&=
		-\sum_{l \not= j}
		\bar p^j_t
		\bar x_j(t) \frac{\bar x_j(t) + \bar x_l(t)}{\bar x_j(t) - \bar x_l(t)}
	\end{align}
	and similarly for the multinomial model,
	\begin{align}
		\label{eq:multi-preimage-movement}
		\frac{\d}{\d t} x_j(t)
		&=
		\left\{\begin{matrix}
			0 & \text{if $\xi_t = x_j(t)$,}\\
			- x_j(t) \frac{x_j(t) + x_l(t)}{x_j(t) - x_l(t)}
			& \text{if $\xi_t = x_l(t)$ for $l \not= j$}
		\end{matrix}\right..
	\end{align}
	Write
	\begin{align*}
		\lambda^{j,l}_t = \left\{ \begin{matrix} 0 & \text{if } j=l \\
			-x_j(t) \frac{x_j(t)+x_l(t)}{x_j(t)-x_l(t)}
			& \text{otherwise}
		\end{matrix} \right.,
	\end{align*}
	and likewise
	\begin{align*}
		\bar \lambda^{j,l}_t = \left\{ \begin{matrix} 0 & \text{if } j=l \\
			- \bar x_j(t) \frac{\bar x_j(t) + \bar x_l(t)}{\bar x_j(t) - \bar x_l(t)}
			& \text{otherwise.}
		\end{matrix} \right.
	\end{align*}
	Therefore for $t \in [0,T]$ we can write,
	integrating \eqref{eq:lpm-preimage-movement}
	and \eqref{eq:multi-preimage-movement}
	with respect to time,
	\begin{align*}
		\bar x_j(t) &= \bar x_j(0) + \sum_{l \not= j} \int_0^t
		\bar \lambda^{j,l}_s \bar p^l_s
		\, \d s,\\
		x_j(t) &= x_j(0)
		+ \sum_{l\not=j} \int_{0}^{t}
		\lambda^{j,l}_s I^l_s
		\,\d s,
	\end{align*}
	where $I^l_s$ is as defined in \eqref{eq:indicator}.
	To bound the size of $\bar x_j(t) - x_j(t)$,
	we can expand the difference of the integrands
	into three terms
	\begin{align*}
		\lambda^{j,l}_s I^l_s - \bar \lambda^{j,l}_s \bar p^l_s
		&=
		(\lambda^{j,l}_s - \bar \lambda^{j,l}_s) I^l_s
		+
		\bar \lambda^{j,l}_s (I^l_s - p^l_s)
		+
		\bar \lambda^{j,l}_s (p^l_s - \bar p^l_s).
	\end{align*}
	As the denominators in the above definitions of $\lambda^{j,l}_t$
	and $\bar \lambda^{j,l}_t$
	are bounded below,
	we can linearise
	$|\lambda^{j,l}_s - \bar \lambda^{j,l}_s| \leq A\left( |x_j(s) - \bar x_j(s)| + |x_l(s) - \bar x_l(s)|\right) \leq A \delta_{\mathrm{x}}(s)$, and
	$\sum_{l\not=j}|\bar \lambda^{j,l}_s| |p_s^l - \bar p_s^l| \leq A \delta_{\mathrm{p}}(s)$,
	and so by the triangle inequality
	\begin{align}
		\label{eq:x-diff-gronwall-mg-sum}
		| x_j(t) - \bar x_j(t) |
		&\leq
		A \int_0^t \delta_{\mathrm{total}}(s)\,\d s
		+ \sum_{l\not=j} \left| \int_0^t \bar \lambda^{j,l}_s (I^l_s - p^l_s)\,\d s \right|.
	\end{align}
	To finish our proof we need to bound the martingale-type terms
	$\int_0^t \bar \lambda^{j,l}_s (I^l_s - p^l_s)\,\d s$.
	Note that for $n \in \N \cup \{0\}$ with $n \leq T/\cparam$,
	by Proposition~\ref{thm:multi-fixed-crude},
	\begin{align*}
		\int_{n\cparam}^{(n+1)\cparam \wedge t} \bar \lambda_{s}^{j,l} (I^l_s - p^l_s) \,\d s
		&=
		\int_{n\cparam}^{(n+1)\cparam \wedge t} \bar \lambda_{s}^{j,l} (I^l_{n\cparam} - p^l_{n\cparam}) \,\d s + \int_{n\cparam}^{(n+1)\cparam \wedge t} \bar \lambda_{s}^{j,l} (p^l_{n\cparam} - p^l_s)\, \d s\\
		&= (I^l_{n\cparam} - p^l_{n\cparam})\int_{n\cparam}^{(n+1)\cparam \wedge t} \bar \lambda_{s}^{j,l} \,\d s + O(\cparam^2).
	\end{align*}
	We will bound each term in the sum on the right-hand side of \eqref{eq:x-diff-gronwall-mg-sum},
	so fix some $l \not= k$.
	Let $X_n := (I^l_{n\cparam} - p^l_{n\cparam})\int_{n\cparam}^{(n+1)\cparam} \bar \lambda_{s}^{j,l} \,\d s$
	and note that $X_n$ is a bounded martingale increment,
	$\E[X_n | \theta_1, \dots, \theta_n] = 0$,
	so define the martingale $M_n := \sum_{i=0}^n X_n$.
	Note that
	\begin{align*}
		\sup_{t \leq T} \left|\int_0^t \bar \lambda^{j,l}_s (I^l_s - p^l_s)\,\d s - M_{\rdown{t/\cparam}}\right| = O(\cparam),
	\end{align*}
	so if we can bound $\max_{n \leq T/\cparam} |M_n|$ then our result follows.
	
	The increments $X_n$ are bounded by $A\cparam$ for a constant $A$
	depending only on $T, K_0$ and $\eta$,
	so $\E[ |M_{\rdown{T/\cparam}}|^2 ] \leq A^2 \cparam$.
	Therefore, by Doob's submartingale inequality,
	\begin{align*}
		\mathbb{P}\left[ \max_{n \leq T/\cparam} |M_n| \geq \frac{1}{k}\cparam^{1/4} \right]
		\leq \frac{A \cparam}{\cparam^{1/2}},
	\end{align*}
	for a constant $A$.
	So for sufficiently small $\cparam$,
	on an event of probability at least $1 - A \cparam^{1/2}$
	we have $\sum_{l\not=j} \left| \int_0^t \bar \lambda^{j,l}_s (I^l_s - p^l_s)\,\d s \right| \leq 3\cparam^{1/4}$
	and the result follows.
\end{proof}

In the proof of Proposition~\ref{thm:p-gronwall-bound}
we will use some analytic techniques from \cite[Section 4.2]{lpm}.
First we require a few further definitions,
mostly following the notation of \cite{lpm}.
\begin{definition}
	\label{def:transition-maps}
	Fix $t \in [0,T]$,
	and for $s \leq t$ define the \emph{transition maps}
	\begin{align}
		\label{eq:transition-maps}
		h_s := \Phi_s^{-1} \circ \Phi_t,
		\quad
		\bar h_s := \bar \Phi_s^{-1} \circ \bar \Phi_t.
	\end{align}
	Several transition maps for an ALE-like cluster
	are shown in \autoref{fig:transition-map}.
	Note that if $t = n \cparam$ for some $n \in \N$,
	then $h_0 = f_1 \circ \dots \circ f_n$
	and $h_{k\cparam} = f_{k+1} \circ \dots \circ f_n$
	for $k \in \N$ with $k < n$.
	
	For $s \in [0,t]$ and $j \in \{1, \dots, k\}$,
	define
	\begin{equation}
		\label{eq:wjs}
		w_j(s) := h_s( x_j(t) ),
		\quad \bar w_j(s) = \bar h_s( \bar x_j(t) ).
	\end{equation}
	Also define
	\begin{equation}
		\label{eq:kappajs}
		\kappa_j(s) := h_s''(x_j(t)),
		\quad \bar \kappa_j(s) := \bar h_s''(\bar x_j(t)).
	\end{equation}
\end{definition}

\begin{figure}[h]
	\centering
	\begin{tikzpicture}
		\draw node (circle) at (0,0) {\includegraphics[width=0.4\linewidth,trim=110 60 100 60, clip]{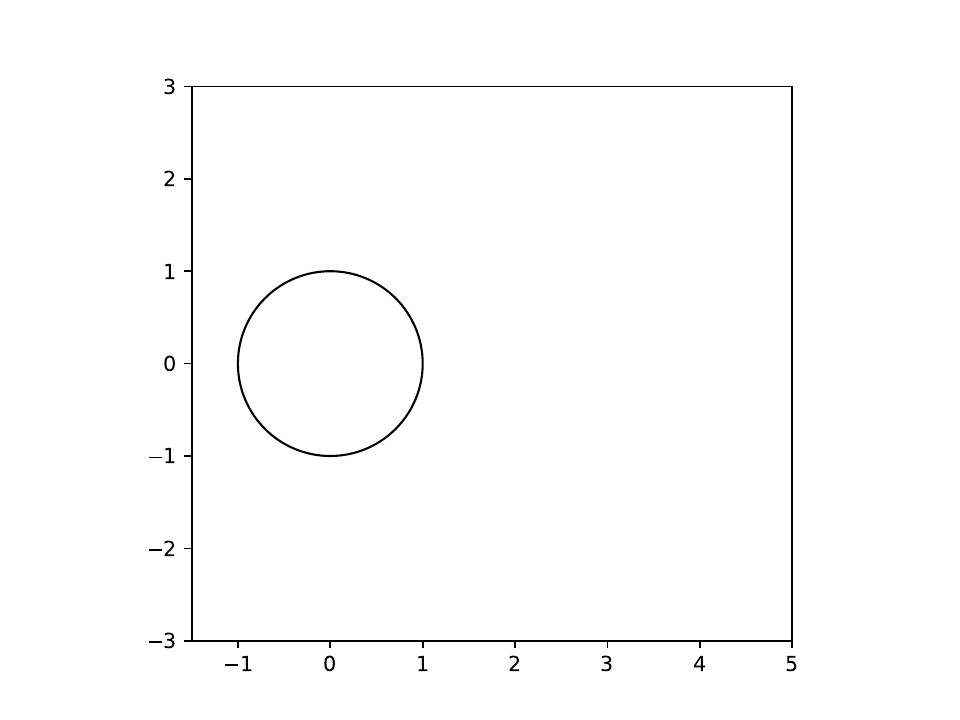}};
		\draw node (full) at (9.0,0) {\includegraphics[width=0.4\linewidth,trim=110 60 100 60, clip]{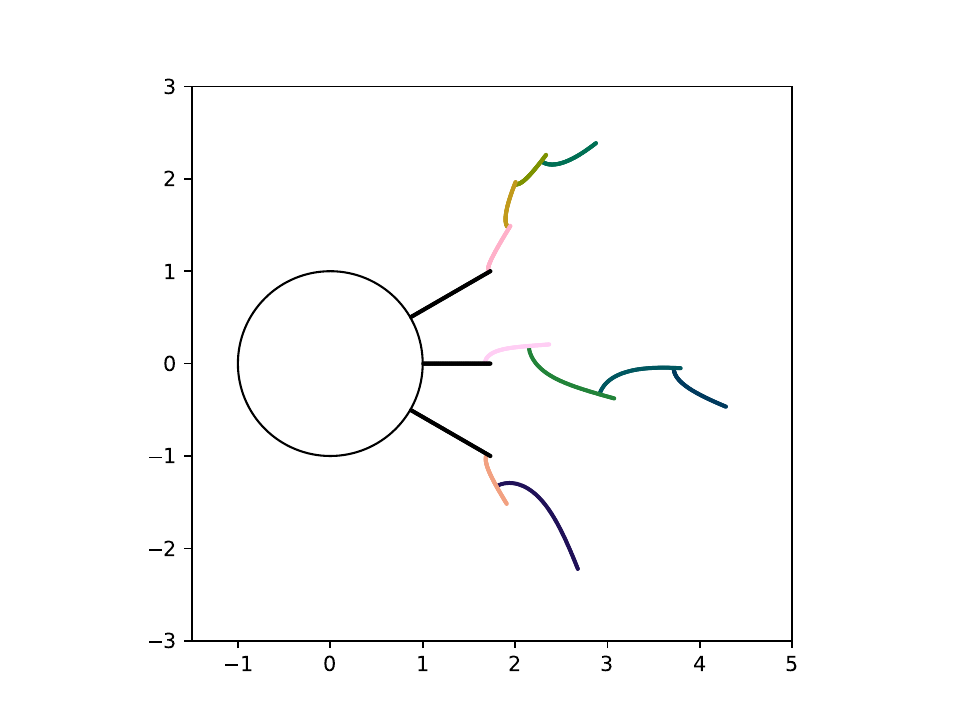}};
		\draw node (h0) at (4,3.6) {\includegraphics[width=0.4\linewidth,trim=110 60 100 60, clip]{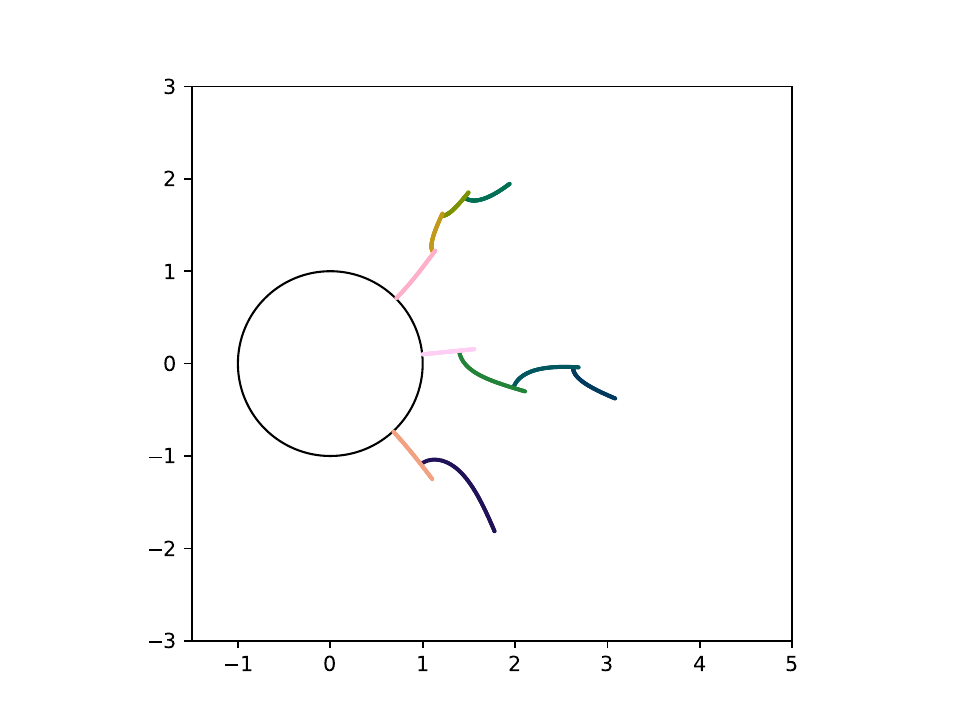}};
		\draw node (h5c) at (4,0) {\includegraphics[width=0.4\linewidth,trim=110 60 100 60, clip]{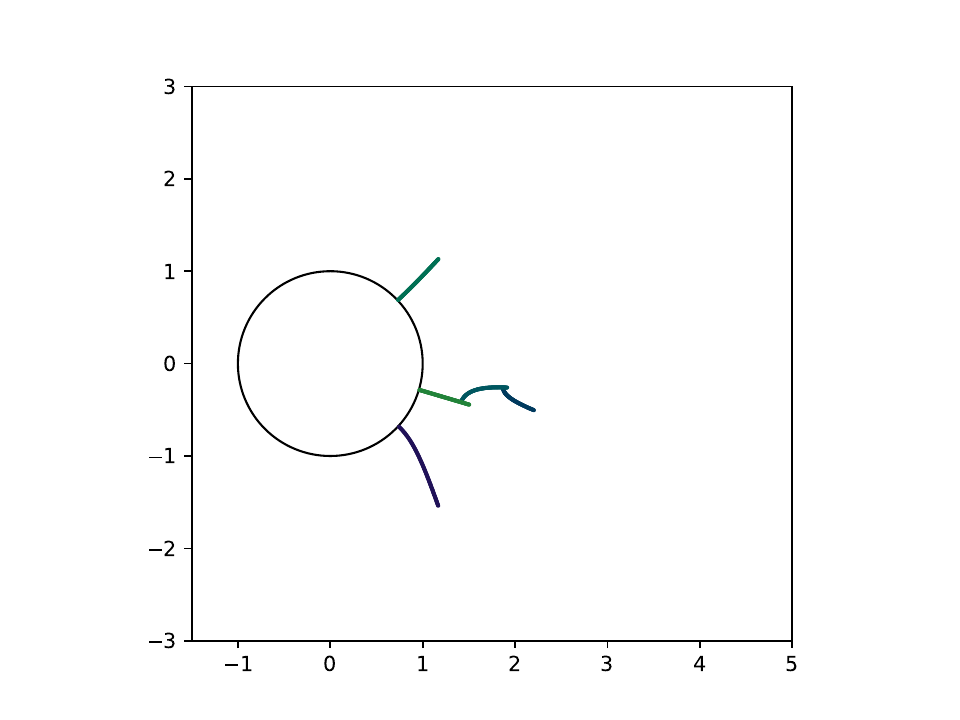}};
		\draw node (h-almost-9) at (4,-2.8) {\includegraphics[width=0.4\linewidth,trim=110 100 100 60, clip]{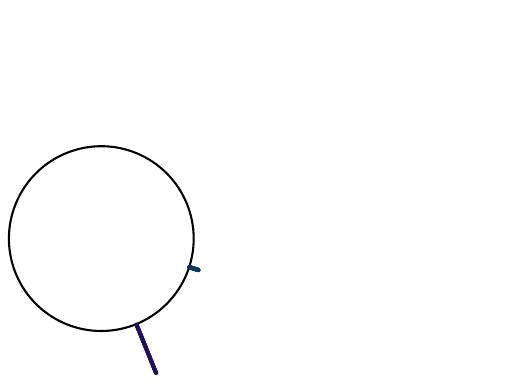}};
		\draw[->] (-0.7,0.9) -- node[anchor=south east] {$h_0$} (0.9,2.5);
		\draw[->] (-0.5,0) -- node[anchor=south] {$h_{5\cparam}$} (0.8,0);
		\draw[->] (-0.7,-0.9) -- node[anchor=south west] {$h_{8.95\cparam}$} (0.9,-2.5);
		\draw[->] (4.3,2.1) -- node[anchor=south west] {$\Phi_0$} (5.8,1.0);
		\draw[->] (4.3,0) -- node[anchor=south] {$\Phi_{5\cparam}$} (5.7,0);
		\draw[->] (3.5,-2.6) -- node[anchor=north west] {$\Phi_{8.95\cparam}$} (5.8,-1.0);
	\end{tikzpicture}
	\caption{\label{fig:transition-map}
		Several transition maps as in Definition~\ref{def:transition-maps}
		for an ALE-like cluster with ten particles
		attached to an initial three slits.
		Notice that none of the transition maps attach the initial three slits
		to the circle,
		and that the image of $\T$ under the transition map $h_t$
		is not monotone in $t$:
		although the capacity is strictly decreasing, the slits are also moving around.
	}
\end{figure}

\begin{remark}
	The transition map $h_s$
	is related to the backward equation \eqref{eq:backwards-equation}.
	If started from the trivial initial condition
	$\Phi_0 = \mathrm{Id}_\Delta$,
	then for $t=T$,
	$h_s = u_{T-s}$ for all $s \in [0,T]$,
	where $u$ is the solution to \eqref{eq:backwards-equation}.
	We have, however, used the notation $h_s$ rather than $u_{t-s}$
	to avoid confusion,
	because our $\Phi_0$ is non-trivial and the endpoint may be any $t \in (0,T]$,
	not just the final $T$.
\end{remark}

\begin{definition}
	\label{def:Hjn}
	Let $j \in \{1,\dots,k\}$ and $n \leq \rdown{T/\cparam}$.
	Let $p > 0$ be such that
	$p^j_{m\cparam} \geq p$ for all $m \leq \rdown{T/\cparam}$
	almost surely
	(we know that such a deterministic constant $p$ must exist).
	Define the event
	\begin{align*}
		H_j(n) := \left\{
		I^j_{n\cparam} = 1 \text{ and, for all } m \leq n,  
		\sum_{l=m}^{n} I^j_{l\cparam} \geq \frac{1}{2}p(n-m+1)
		\right\}.
	\end{align*}
\end{definition}

\begin{remark}
	For the rest of Section~\ref{sec:singular-methods}
	we will repeatedly condition on $H_j(n)$,
	so the reader may wish to make a note of Definition~\ref{def:Hjn}
	or mark its location.
\end{remark}

\begin{remark}
	Since $(I^j_{l\cparam})_{l\leq \rdown{T/\cparam}}$
	dominates a collection of iid Bernoulli($p$) random variables,
	$\P(H_j(n))$ is bounded below by a constant
	depending only on $T$, $\eta$ and $K_0$.
\end{remark}

We will prove Proposition~\ref{thm:p-gronwall-bound}
via several intermediate results,
all stated here before we prove them.

\begin{lemma}
	\label{thm:derivative-difference-bound}
	Let $j \in \{1, \dots, k\}$,
	and let $t = n\cparam$ for some $n \leq \rdown{T/\cparam}$.
	Let the event $H_j(n)$ be as in Definition~\ref{def:Hjn}.
	There exists a constant $A$ depending on $K_0, T$ and $\eta$
	such that, conditional on $H_j(n)$ we almost surely have
	\begin{align}
		\nonumber
		\left|
		\Phi_{t}''(x_j(t)) - \bar\Phi_{t}''(\bar x_j(t))
		\right|
		&\leq
		A |(w_j(0) - x_j(0)) - (\bar w_j(0) - \bar x_j(0))|\\
		\label{eq:2nd-deriv-bound-update-1}
		&\phantom{\leq}+ A\left|\kappa_j(0)(w_j(0) - x_j(0)) - \bar\kappa_j(0)(\bar w_j(0) - \bar x_j(0))\right|.
	\end{align}
\end{lemma}

\begin{lemma}
	\label{thm:integral-linearisation}
	Let $j$, $n$ and $t$ be as above.
	Conditional on $H_j(n)$,
	\begin{align}
		\label{eq:unsimplified-second-deriv}
		\log[\kappa_j(0)( w_j(0) - x_j(0) )]
		&=
		2t - \sum_{l=1}^{k} \int_0^t I^l_s Q_s^{j,l}
	\end{align}
	almost surely, where
	\begin{align}
		\label{eq:qjj}
		Q^{j,j}_s &:= \frac{-3x_j(s)}{w_j(s) - x_j(s)},
		\intertext{and}
		\label{eq:qjl}
		Q^{j,l}_s &:= \frac{2x_l(s)^2}{(w_j(s)-x_l(s))^2} + \frac{2x_l(s)^2}{(w_j(s)-x_l(s))(x_j(s)-x_l(s))}
	\end{align}
	for $l \not= j$.
	Similarly,
	\begin{align}
		\label{eq:unsimplified-second-deriv-lpm}
		\log[\bar\kappa_j(0)( \bar w_j(0) - \bar x_j(0) )]
		&=
		2t - \sum_{l=1}^{k}\int_0^t \bar p^l_s \bar Q^{j,l}_s \,\d s,
	\end{align}
	where the quantities $\bar Q_s^{j,l}$
	are the LPM equivalents of \eqref{eq:qjj} and \eqref{eq:qjl}.
	Then we can bound the final term in \eqref{eq:2nd-deriv-bound-update-1},
	\begin{align}
		\label{eq:linearised-kappa-w}
		\left|\kappa_j(0)(w_j(0) - x_j(0)) - \bar\kappa_j(0)(\bar w_j(0) - \bar x_j(0))\right|
		\leq
		A \sum_{l=1}^{k} \left| \int_0^t (I_s^l Q_s^{j,l} - \bar p^l_s \bar Q^{j,l}_s) \,\d s \right|.
	\end{align}
\end{lemma}

We can linearise the integrand
in the line above to bound it by something involving
$|(w_j(s)-x_j(s)) - (\bar w_j(s) - \bar x_j(s))|$:

\begin{proposition}
	\label{thm:high-probability-integral-bound}
	For some constant $R \geq 9$
	there exists an event $E_{\mathrm{p}}^t$
	with $\P[E_{\mathrm{p}}^t \,|\, H_j(n) ] \geq 1 - A \cparam^{4/R}\log(T/\cparam)$
	such that on $E_{\mathrm{p}}^t \cap H_j(n)$,
	\eqref{eq:2nd-deriv-bound-update-1}
	is almost surely bounded by a constant multiple of
	\begin{align*}
		\cparam^{\frac{1}{2} - \frac{2}{R}}
		+
		\int_0^t \frac{\delta_{\mathrm{total}}(s)}{\sqrt{t-s}}\,\d s
		+
		|(w_j(0) - x_j(0)) - (\bar w_j(0) - \bar x_j(0))|\\
		+
		\int_0^t \frac{|(w_j(s) - x_j(s)) - (\bar w_j(s) - \bar x_j(s))|}{t-s}\,\d s.
	\end{align*}
\end{proposition}

Now if we can control 
$|(w_j(s)-x_j(s)) - (\bar w_j(s) - \bar x_j(s))|$,
Proposition~\ref{thm:p-gronwall-bound} will follow.

\begin{lemma}
	\label{thm:difference-integral-bound}
	Let $j$, $n$ and $t$ be as above.
	For $s \in [0,t]$
	let $$u_s := (w_j(s) - x_j(s)) - (\bar w_j(s) - \bar x_j(s)).$$
	Then, conditional on $H_j(n)$, $u$ almost surely satisfies the integral inequality
	\begin{align*}
		|u_s| \leq X(s) + A \int_s^t \frac{X(r)}{t-r}\,\d r,
	\end{align*}
	for all $s \in [0,t]$,
	where $$X(s) := A\int_s^t \frac{\delta_{\mathrm{total}}(r)}{\sqrt{t-r}}\,\d r + \sum_{l=1}^k \left| \int_s^t C^l_r (I^l_r - p^l_r)\,\d r \right|,$$
	and the $C^l_r$ are deterministic functions (depending on $t$)
	such that $|C^l_r| \leq A(t-r)^{-1/2}$ for all $r \in [0,t)$.
\end{lemma}

\begin{corollary}
	\label{thm:u-high-probability-bound}
	For $j$, $n$ and $t$ as above, and for the $u$ defined in
	Lemma~\ref{thm:difference-integral-bound},
	conditional on $H_j(n)$,
	on an event of conditional probability at least
	$1 - A\cparam^{4/R}\log(1/\cparam)$,
	where $R$ is some positive universal constant,
	we have
	\begin{align*}
		|u_0|
		&\leq A\int_0^t \frac{\delta_{\mathrm{total}}(s)}{(t-s)^{51/100}}\d s + A\cparam^{1/R}\log(1/\cparam),
		\intertext{and}
		\int_0^t \frac{|u_s|}{t-s}\,\d s
		&\leq
		A\int_{0}^{t} \frac{\delta_{\mathrm{total}}(s)}{(t-s)^{51/100}}
		+
		A\cparam^{1/R}(\log(1/\cparam))^2.
	\end{align*}
\end{corollary}

Now we will prove the above five results in turn.

\begin{proof}[Proof of Lemma~\ref{thm:derivative-difference-bound}]
	By the chain rule we have $\Phi_t''(x_j(t)) = \kappa_j(0)\Phi_0'(w_j(0))$,
	and so
	\begin{align*}
		\Phi_{t}''(x_j(t))
		&=
		\frac{\Phi_0'(w_j(0))}{w_j(0) - x_j(0)}
		\times \kappa(0) (w_j(0) - x_j(0)).
	\end{align*}
	We will repeatedly make use of expansions of the form
	$ab - \bar a \bar b
	= (a - \bar a)b + \bar a (b - \bar b)$
	to linearise the difference of complex expressions.
	Applying this to the above decomposition of $\Phi_t''(x_j(t))$
	and its LPM equivalent, we have
	\begin{align}
		\nonumber
		\left|
		\Phi_{t}''(x_j(t)) - \bar\Phi_{t}''(\bar x_j(t))
		\right|
		&\leq
		\left|
		\frac{\Phi_0'(w_j(0))}{w_j(0)-x_j(0)} - \frac{\Phi_0'(\bar w_j(0))}{\bar w_j(0)-\bar x_j(0)}
		\right|
		|\kappa(0) (w_j(0) - x_j(0))|\\
		\label{eq:use-of-kappa}
		&\mspace{10mu}+
		\left|
		\frac{\Phi_0'(\bar w_j(0))}{\bar w_j(0)-\bar x_j(0)}
		\right|
		\left|
		\kappa_j(0)(w_j(0)-x_j(0))
		-
		\bar\kappa_j(0)(\bar w_j(0)-\bar x_j(0))
		\right|.
	\end{align}
	We claim that both $\left|\frac{\Phi_0'(\bar w_j(0))}{\bar w_j(0)-\bar x_j(0)}\right|$
	and $|\kappa_j(0)(w_j(0) - x_j(0))|$ are bounded.
	The former is bounded by $\sup_{\zeta \in [x_j(0),\bar w_j(0)]} | \Phi_0''(\zeta) |$.
	The Schwarz--Christoffel representation \cite[equation (1)]{gearlike} tells us
	$\Phi_0'(z) = \frac{\Phi_0(z)}{z} S(z)$,
	where $S(z) := \frac{\prod_{l=1}^k (z - x_j(0))}{\sqrt{\prod_{l=1}^{k} (z - b_{+}^l)(z - b_{-}^l) }}$,
	and $b_{\pm}^j$ are the two preimages of the base of the $j$th slit.
	We can differentiate this to verify that $|\Phi_0''(\zeta)|$ is bounded
	when $\zeta$ is not close to any of the $b_{\pm}^l$,
	which is the case for $\zeta \in [x_j(0), \bar w_j(0)]$.
	Next note that $\sum_{l=1}^{k} |Q_{s}^{j,l}| = O((t-s)^{-1/2})$, which is integrable,
	and so we can establish a finite upper bound
	and positive lower bound
	on $|\kappa_j(0)(w_j(0) - x_j(0))|$
	using \eqref{eq:unsimplified-second-deriv}
	(the proof of which does not rely on \eqref{eq:2nd-deriv-bound-update-1}).\\
	
	To simplify the first term
	on the right-hand side of \eqref{eq:use-of-kappa},
	since $\Phi_0'(x_j(0)) = 0$
	we can use the fundamental theorem of calculus to write
	\begin{align*}
		\Phi_0'(w_j(0)) = (w_j(0)-x_j(0))\int_0^1 \Phi_0''( \alpha w_j(0) + (1-\alpha)x_j(0) )\,\d \alpha,
	\end{align*}
	and similarly for $\Phi_0'(\bar w_j(0))$.
	Then, using the Schwarz--Christoffel representation again to bound
	$|\Phi_0^{(3)}(\zeta)|$
	and noting that $\bar x_j(0) = x_j(0)$,
	we can bound the first term on the RHS of \eqref{eq:use-of-kappa} by
	\begin{align*}
		A\int_0^1 \left| \Phi_0''(p w_j(0) + (1-p)x_j(0)) - \Phi_0''(p \bar w_j(0) + (1-p)\bar x_j(0)) \right| \d p
		&\leq
		A |w_j(0) - \bar w_j(0)|,
	\end{align*}
	which is equal to $A|(w_j(0) - x_j(0)) - (\bar w_j(0) - \bar x_j(0))|$,
	so \eqref{eq:2nd-deriv-bound-update-1} holds.
\end{proof}

\begin{proof}[Proof of Lemma~\ref{thm:integral-linearisation}]
	As $t$ is fixed, we will denote
	$\frac{\partial}{\partial s}h_s(z)$
	by $\dot h_s(z)$, etc.
	The main effect of conditioning on $H_j(n)$ is that
	$h_s'(x_j(t)) = 0$ for all $s \in [0,t)$
	(the same is always true for the LPM).
	Where it is unambiguous we will write $x_j = x_j(s)$,
	$w_j = w_j(s)$, etc.
	We will prove \eqref{eq:unsimplified-second-deriv},
	and omit the proof of \eqref{eq:unsimplified-second-deriv-lpm}
	as it is very similar.
	
	For any $z$, $h_s(z)$ is continuous in $s \in [0,t]$
	and the inverse Loewner equation describes
	the evolution of $h_s$ for $0 < s < t$:
	\begin{align}
		\label{eq:inverse-loewner-multinomial}
		\frac{\pd}{\pd s} h_s(z)
		&=
		- h_s(z) \sum_{l=1}^{k}
		I_s^l	
		\frac{h_s(z) + x_l(s)}{h_s(z) - x_l(s)}.
	\end{align}
	Using this to track $z = x_j(t)$, we have for $s \in (0,t)$ that
	\begin{align}
		\label{eq:retraction-de}
		\dot w_j(s)
		= - \sum_{l=1}^{k} I^l_s w_j\frac{w_j + x_l}{w_j - x_l}
		= - \sum_{l=1}^{k} I^l_s \left( w_j + 2x_l + \frac{2x_l^2}{w_j - x_l} \right).
	\end{align}
	Note that $w_j(0) = \Phi_0^{-1}( \Phi_t(x_j(t)) )$
	and $w_j(t) = x_j(t)$.
	Since $h_s'(x_j(t)) = 0$ for $s < t$,
	differentiating the right-hand side of \eqref{eq:inverse-loewner-multinomial}
	twice with respect to $z$ and evaluating at $x_j(t)$
	gives us
	\begin{align*}
		\frac{\dot \kappa(s)}{\kappa(s)}
		&=
		- \sum_{l=1}^{k} I^l_s \left( 1 - \frac{2x_l(s)^2}{(w_j(s) - x_l(s))^2} \right).
	\end{align*}
	As $s \uparrow t$,
	since we have conditioned on $H_j(n)$ we have
	$|w_j(s) - x_j(s)|^{-2} \approx \frac{1}{t-s}$ as $s \uparrow t$.
	Therefore the $l=j$ term above has a non-integrable singularity,
	which we would like to eliminate.
	Subtracting \eqref{eq:multi-preimage-movement} from \eqref{eq:retraction-de}
	gives
	\begin{align}
		\nonumber
		\dot w_j - \dot x_j
		&=
		-\sum_{l} I^l_s \left( w_j + 2x_l + \frac{2x_l^2}{w_j - x_l} \right)
		+\sum_{l \not= j} I^l_s x_j\frac{x_j+x_l}{x_j-x_l}\\
		\label{eq:wx-loewner}
		&=
		- I^j_s \left( w_j + 2x_j + \frac{2x_j^2}{w_j-x_j} \right)
		+ \sum_{l\not=j}
		I^l_s \left(
		\frac{2x_l^2}{(x_j-x_l)(w_j-x_l)} - 1 
		\right)
		(w_j - x_j),
	\end{align}
	and so
	\begin{align*}
		\frac{\dot w_j - \dot x_j}{w_j - x_j}
		&=
		-I^j_s \left( \frac{w_j + 2x_j}{w_j - x_j} + \frac{2x_j^2}{(w_j-x_j)^2} \right)
		+ \sum_{l\not=j} I^l_s \left( \frac{2x_l^2}{(x_j-x_l)(w_j-x_l)} - 1\right).
	\end{align*}
	Hence
	\begin{align*}
		\frac{\dot \kappa}{\kappa} + \frac{\dot w_j - \dot x_j}{w_j-x_j}
		&=
		-I^j_s \left(
		2 + \frac{3 x_j}{w_j-x_j}
		\right)
		+ \sum_{l\not=j} I^l_s \left(
		\frac{2x_l^2}{(w_j-x_l)^2} + \frac{2x_l^2}{(w_j-x_l)(x_j-x_l)} - 2
		\right)\\
		&=
		-2
		-I^j_s \frac{3 x_j}{w_j-x_j}
		+ \sum_{l\not=j} I^l_s \left(
		\frac{2x_l^2}{(w_j-x_l)^2} + \frac{2x_l^2}{(w_j-x_l)(x_j-x_l)}
		\right)\\
		&=
		-2 + \sum_{l=1}^{k} I_s^l Q_s^{j,l}.
	\end{align*}
	There is still a singularity in the $Q_{s}^{j,j}$ term
	as $s \uparrow t$,
	but it is integrable. Therefore
	\begin{align*}
		\log \frac{\lim_{s \uparrow t} \kappa_j(s)( w_j(s) - x_j(s) )}{\kappa(0)( w_j(0) - x_j(0) ) }
		&=
		-2 t + \sum_{l=1}^{k}\int_0^{t} I^l_s Q^{j,l}_s \,\d s.
	\end{align*}
	We claim that the numerator on the left-hand side is equal to 1.
	For $s \in (t-\cparam,t)$,
	$h_s = f^{\theta_n, t-s}$
	as in Definition~\ref{def:particle-map},
	and $x_j(s) = x_j(t) = e^{i\theta_n}$.
	Then $$w_j(s) - x_j(s) = f^{\theta_n,t-s}(e^{i\theta_n}) - e^{i\theta_n}
	= e^{i\theta_n} d(t-s),$$
	and using \eqref{eq:f-second-derivative} we have
	$$\kappa_j(s) = e^{-i\theta_n} (f^{t-s})''(1)
	= e^{-i\theta_n} \frac{1 + d(t-s)}{2\sqrt{1 - e^{-(t-s)}}}.$$
	Since $d(t-s) \sim 2\sqrt{t-s}$ as $s \uparrow t$,
	we get $\kappa_j(s)(w_j(s) - x_j(s)) = (1 + d(t-s))\frac{d(t-s)}{2\sqrt{1 - e^{-(t-s)}}} \to 1$ as $s \uparrow t$,
	so \eqref{eq:unsimplified-second-deriv} follows.
	
	We have already shown in the proof of
	Lemma~\ref{thm:derivative-difference-bound} that
	$A^{-1} \leq |\kappa_j(0)(w_j(0) - x_j(0))| \leq A$,
	and using \eqref{eq:unsimplified-second-deriv-lpm}
	we obtain similar upper and lower bounds on
	$|\bar\kappa_j(0)(\bar w_j(0) - \bar x_j(0))|$.
	Therefore we can use the mean value theorem to bound
	the left-hand side of \eqref{eq:linearised-kappa-w}
	by a constant multiple of
	$\left|\log[\kappa_j(0)(w_j(0) - x_j(0))]
	- \log[\bar\kappa_j(0)(\bar w_j(0) - \bar x_j(0))]\right|$
	and obtain \eqref{eq:linearised-kappa-w}.
\end{proof}

\begin{proof}[Proof of Proposition~\ref{thm:high-probability-integral-bound}]
	We only need to bound the right-hand side of \eqref{eq:linearised-kappa-w}.
	We decompose the integrands into three terms,
	\begin{align*}
		I^l_s Q^{j,l}_s - \bar p^l_s \bar Q^{j,l}_s
		&=
		(I^l_s - p^l_s) \bar Q^{j,l}_s
		+ (p^l_s - \bar p^l_s) \bar Q^{j,l}_s
		+ I^l_s (Q^{j,l}_s - \bar Q^{j,l}_s),
	\end{align*}
	so the $l$th term on the right-hand side of
	\eqref{eq:linearised-kappa-w} is bounded by a constant multiple of
	\begin{align}
		\label{eq:rhs-decomposition}
		\left|
		\int_0^{t} (I^l_s - p^l_s) \bar Q^{j,l}_s \,\d s
		\right|
		+
		\int_0^{t}
		|p^l_s - \bar p^l_s| |\bar Q^{j,l}_s|
		\,\d s
		+
		\int_0^{t}
		|Q^{j,l}_s - \bar Q^{j,l}_s|
		\,\d s.
	\end{align}
	As in the proof of Proposition~\ref{thm:x-gronwall-bound},
	for $s \in [m\cparam,(m+1)\cparam)$
	we can write $I^l_s - p^l_s = I^l_{m\cparam} - p^l_{m\cparam} + O(\cparam)$,
	and so the first term in \eqref{eq:rhs-decomposition} can be written
	\[
	\left|
	\sum_{m=0}^{n-1}
	(I^l_{m\cparam} - p^l_{m\cparam})
	\int_{m\cparam}^{(m+1)\cparam} \bar Q^{j,l}_s\,\d s
	\right|
	+ O(\cparam).
	\]
	
	Since $\bar Q^{j,l}_s$ is deterministic
	and $(I_{m\cparam}^l - p_{m\cparam}^l)_{m \geq 0}$
	are martingale increments
	with respect to $(\theta_m)_{m\geq 0}$,
	this sum resembles a martingale,
	so we could try to control it
	by bounding its second moment.
	However, by conditioning on $H_j(n)$
	we can no longer use adaptedness.
	Fortunately,
	for all $n \leq \rdown{T/\cparam}$,
	$\mathbb{P}(H_j(n)) \geq A^{-1}$
	and so for each $l \in \{1,\dots,k\}$,
	\begin{align*}
		\E\left[ \left|
		\sum_{m=0}^{n-1}
		(I^l_{m\cparam} - p^l_{m\cparam})
		\int_{m\cparam}^{(m+1)\cparam} \bar Q^{j,l}_s\,\d s
		\right|^2 \,\Bigg|\, H_j(n) \right]
		&\leq
		A \E\left[ \left|
		\sum_{m=0}^{n-1}
		(I^l_{m\cparam} - p^l_{m\cparam})
		\int_{m\cparam}^{(m+1)\cparam} \bar Q^{j,l}_s\,\d s
		\right|^2\right]\\
		&\leq
		2A\E\left[
		\sum_{m=0}^{n-1} \left(
		\int_{m\cparam}^{(m+1)\cparam}
		|\bar Q^{j,l}_s|
		\,\d s
		\right)^2
		\right],
	\end{align*}
	where the final inequality comes from the martingale property
	and the fact that $|I^l_{m\cparam}-p^l_{m\cparam}| \leq 1$
	(the factor of 2 is because we bound both the real and imaginary
	parts of $\bar Q^{j,l}_s$ by $|\bar Q^{j,l}_s|$).
	Then $|\bar Q^{j,l}_s| \leq A/\sqrt{t-s}$,
	and so
	$$\sum_{m=0}^{n-1} \E \left(\int_{m\cparam}^{(m+1)\cparam} |\bar Q^{j,l}_s| \,\d s\right)^2
	\leq A \cparam \sum_{m=0}^{n-1}( \sqrt{m+1} - \sqrt{m})^2
	\leq A\cparam \sum_{m=0}^{n - 1} \frac{A}{m+1}
	\leq A \cparam \log\frac{t}{\cparam}.$$
	Define the event
	\begin{align}
		\label{eq:ept}
		E_{\mathrm{p}}^t := \left\{ \sum_{l=1}^{k}\left|
		\int_0^t (I_s^l - p_s^l) \bar Q^{j,l}_s \,\d s
		\right|
		\leq \cparam^{\frac{1}{2} - \frac{2}{R}}
		\right\},
	\end{align}
	then by Markov's inequality,
	$\P(E_{\mathrm{p}}^t) \geq 1 - A \cparam^{4/R}\log(T/\cparam)$
	for sufficiently small $\cparam$.
	
	For the second term in \eqref{eq:rhs-decomposition},
	we use the bound $|\bar Q^{j,l}_s| \leq A /\sqrt{t-s}$
	to get
	\begin{align}
		\label{eq:Q-linearisation-p-term}
		\sum_{l=1}^{k} \int_0^t |p_s^l - \bar p_s^l| |\bar Q_s^{j,l}|
		\leq A \int_0^t \frac{\delta_{\mathrm{p}}(s)}{\sqrt{t-s}}.
	\end{align}
	
	For the final term with $l\not= j$,
	the denominators in \eqref{eq:qjl} stay away from 0
	so we can linearise
	\begin{align*}
		|Q^{j,l}_s - \bar Q^{j,l}_s| &\leq A\left( |(w_j(s) - x_j(s)) - (\bar w_j(s) - \bar x_j(s))| + |x_j(s) - \bar x_j(s)| + |x_l(s) - \bar x_l(s)| \right)\\
		&\leq A |(w_j(s) - x_j(s)) - (\bar w_j(s) - \bar x_j(s))|
		+ A \delta_{\mathrm{x}}(s).
	\end{align*}
	The $l=j$ term is linearised similarly,
	\begin{align*}
		|Q^{j,j}_s - \bar Q^{j,j}_s|
		&\leq
		\frac{3|x_j(s)-\bar x_j(s)|}{|\bar w_j(s) - \bar x_j(s)|}
		+
		\frac{|(w_j(s) - x_j(s)) - (\bar w_j(s) - \bar x_j(s))|}{|w_j(s) - x_j(s)| |\bar w_j(s) - \bar x_j(s)|}\\
		&\leq
		A\frac{\delta_{\mathrm{x}}(s)}{\sqrt{t-s}}
		+
		A \frac{|(w_j(s) - x_j(s)) - (\bar w_j(s) - \bar x_j(s))|}{t - s},
	\end{align*}
	which absorbs the $l \not= j$ terms.
	Note that we have used $|w_j(s) - x_j(s)| \geq A^{-1}\sqrt{t-s}$
	for all $s \leq t$,
	which follows from the displayed part of Definition~\ref{def:Hjn}.
	Combined with \eqref{eq:Q-linearisation-p-term},
	on the event $E_{\mathrm{p}}^t \cap H_j(n)$
	this gives our claimed bound.
\end{proof}

\begin{proof}[Proof of Lemma~\ref{thm:difference-integral-bound}]
	Since the $s$-derivative of $u_s$
	is singular as $s \uparrow t$,
	we will carefully analyse and control the singularity.
	To simplify the notation we will write $w_j = w_j(s)$, etc.
	Subtract \eqref{eq:wx-loewner} from its LPM equivalent to obtain
	\begin{align}
		\nonumber
		\dot u_s
		=
		&-((w_j - x_j) - (\bar w_j - \bar x_j))\\
		\nonumber
		&+ \sum_{l\not=j}\left(
		-I^l_s \frac{2x_l^2}{(x_j-x_l)(w_j-x_l)} (w_j - x_j)
		+\bar p^l_s \frac{2 \bar x_l^2}{(\bar x_j-\bar x_l)(\bar w_j-\bar x_l)}(\bar w_j - \bar x_j)
		\right)\\
		\label{eq:wx-diff}
		&-I^j_s \left( 3x_j + \frac{2x_j^2}{w_j - x_j} \right)
		+\bar p^j_s \left( 3\bar x_j + \frac{2\bar x_j^2}{\bar w_j -  \bar x_j}\right).
	\end{align}
	We will linearise all these terms to write \eqref{eq:wx-diff}
	in the form $\dot u_s = \gamma(s)u_s + \alpha(s)$
	for suitable functions $\gamma$ and $\alpha$.
	The first line on the right-hand side is clearly $-u_s$.
	The $l$th summand on the second line is a difference of the form
	$\bar a \bar b \bar c - abc$,
	so can be linearised as
	$(\bar a - a)\bar b \bar c +a(\bar b - b)\bar c + ab(\bar c - c)$,
	giving
	\begin{align*}
		(\bar p^l_s - I^l_s) \frac{2\bar x_l^2(\bar w_j - \bar x_j)}{(\bar x_j-\bar x_l)(\bar w_j-\bar x_l)}
		&+
		I^l_s \left( \frac{2x_l^2}{(x_j-x_l)(w_j-x_l)} - \frac{2 \bar x_l^2}{(\bar x_j-\bar x_l)(\bar w_j-\bar x_l)} \right)(\bar w_j - \bar x_j)\\
		&+
		I^l_s \frac{2 \bar x_l^2}{(\bar x_j-\bar x_l)(\bar w_j-\bar x_l)}
		((\bar w_j - \bar x_j) - (w_j - x_j)).
	\end{align*}
	If we write $\bar p^l_s = p^l_s + (\bar p^l_s - p^l_s)$,
	then the first term becomes the sum of
	$-(I^l_s - p^l_s) \frac{2\bar x_l^2(\bar w_j - \bar x_j)}{(\bar x_j-\bar x_l)(\bar w_j-\bar x_l)}$
	and something bounded by $A \delta_{\mathrm{total}}(s)$.
	The final term is also bounded by a multiple of $u_s$,
	and so for two bounded functions $A^l_1$ and $A^l_2$
	we can write the $l$th summand as
	\begin{align}
		\label{eq:u-de-lth-summand}
		-(I^l_s - p^l_s)\frac{2\bar x_l^2(\bar w_j - \bar x_j)}{(\bar x_j-\bar x_l)(\bar w_j-\bar x_l)}
		+ A_1^l(s) \delta_{\mathrm{total}}(s)
		+ A_2^l(s) u_s.
	\end{align}
	Similarly we can write the last line of \eqref{eq:wx-diff}
	as
	\begin{align*}
		-(I^j_s - p^j_s)\left(3 \bar x_j + \frac{2 \bar x_j^2}{\bar w_j - \bar x_j} \right)
		+
		(\bar p^j_s - p^j_s)\left(3 \bar x_j + \frac{2 \bar x_j^2}{\bar w_j - \bar x_j} \right)\\
		+
		3 I^j_s \left( \bar x_j - x_j \right)
		+
		I^j_s \left(
		\frac{2\bar x_j^2}{\bar w_j -  \bar x_j}
		-
		\frac{2x_j^2}{w_j-x_j}
		\right).
	\end{align*}
	Linearising the final term on the line above
	by writing $\bar x_j^2 = \bar x_j x_j + \bar x_j(\bar x_j - x_j)$,
	and $x_j^2 = x_j \bar x_j + x_j(x_j - \bar x_j)$,
	we find that it is equal to
	\[
	\frac{2 I^j_s x_j \bar x_j}{(w_j-x_j)(\bar w_j-\bar x_j)} u_s
	+
	I^j_s\left(
	\frac{2\bar x_j}{\bar w_j - \bar x_j}
	+ \frac{2 x_j}{w_j - x_j}
	\right)
	(\bar x_j - x_j).
	\]
	On $H_j(n)$ we have $|w_j(s) - x_j(s)| \geq A^{-1} \sqrt{t-s}$ for all $s < t$
	and $|\bar w_j(s) - \bar x_j(s)| \geq A^{-1}\sqrt{t-s}$,
	so we can bound all the terms involving
	$|\bar p^j_s - p^j_s|$ and $|\bar x_j - x_j|$
	by a constant multiple
	of $\frac{\delta_{\mathrm{total}}(s)}{\sqrt{t-s}}$.
	Therefore the last line of \eqref{eq:wx-diff}
	can be rewritten
	\begin{align}
		\label{eq:u-de-jth-summand}
		-(I^j_s - p^j_s)\left(3 \bar x_j + \frac{2 \bar x_j^2}{\bar w_j - \bar x_j} \right)
		+
		\frac{2 I^j_s x_j \bar x_j}{(w_j-x_j)(\bar w_j-\bar x_j)} u_s
		+
		A^j(s) \frac{\delta_{\mathrm{total}}(s)}{\sqrt{t-s}}
	\end{align}
	for a bounded function $A^j$.
	Substituting \eqref{eq:u-de-lth-summand}
	and \eqref{eq:u-de-jth-summand} back into \eqref{eq:wx-diff},
	and noting that the $A^l_1$ terms can be absorbed
	by the $A^j$ term, we obtain
	\begin{align}
		\nonumber
		\dot u_s
		&=
		\left( \frac{2 I^j_s x_j \bar x_j}{(w_j-x_j)(\bar w_j-\bar x_j)}
		+ \sum_{l\not=j}A^l_2(s) - 1 \right) u_s
		+ A^j(s) \frac{\delta_{\mathrm{total}}(s)}{\sqrt{t-s}}\\
		\label{eq:wx-diff-simplified}
		&\phantom{=}
		-\sum_{l\not=j} \left(
		(I^l_s - p^l_s) \frac{2\bar x_l^2(\bar w_j - \bar x_j)}{(\bar x_j-\bar x_l)(\bar w_j-\bar x_l)}
		\right)
		-
		(I^j_s - p^j_s) \left(3 \bar x_j + \frac{2 \bar x_j^2}{\bar w_j - \bar x_j}\right).
	\end{align}
	Let $$\gamma(s) := \frac{2 I^j_s x_j \bar x_j}{(w_j-x_j)(\bar w_j-\bar x_j)}
	+ \sum_{l\not=j}A^l_2(s) - 1$$
	and let $\alpha(s)$ denote the remaining terms in
	\eqref{eq:wx-diff-simplified},
	$$\alpha(s) := A^j(s)\frac{\delta_{\mathrm{total}}(s)}{\sqrt{t-s}}
	+ \sum_{l=1}^k C^l_s (I^l_s - p^l_s),$$
	where $C^j_s := - (3\bar x_j + \frac{2\bar x_j^2}{\bar w_j - \bar x_j})$
	and $C^l_s := -\frac{2\bar x_l^2 (\bar w_j - \bar x_j)}{(\bar x_j - \bar x_l)(\bar w_j - \bar x_l)}$ for $l \not= j$.
	
	The ODE becomes
	$\dot u_s = \gamma(s) u_s + \alpha(s)$.
	Let $$Y(s) := \int_s^t \alpha(r)\,\d r,$$
	then we can write $u$ using an integrating factor as
	\begin{equation*}
		\label{eq:integral-form-of-u}
		u_s = -Y(s) + \int_s^t Y(r)\gamma(r)\exp\left(-\int_s^r \gamma(v)\,\d v\right)\!\d r,
	\end{equation*}
	and so by the triangle inequality
	\begin{align}
		\label{eq:integral-bound-on-u-unsimplified}
		|u_s| \leq |Y(s)| + \int_s^t |Y(r)||\gamma(r)|\exp\left(-\int_s^r \mathrm{Re}\gamma(v)\,\d v\right)\!\d r.
	\end{align}
	Note that
	since $\arg (w_j(s) - x_j(s)) - \arg x_j(s) \to 0$
	and
	$\arg( \bar w_j(s) - \bar x_j(s) ) - \arg \bar x_j(s) \to 0$
	as $s \uparrow t$,
	the real part of $\gamma(s)$ is bounded below,
	and so \eqref{eq:integral-bound-on-u-unsimplified}
	gives us a simpler bound
	\begin{align*}
		|u_s| \leq |Y(s)| + A\int_s^t |Y(r)||\gamma(r)|\d r.
	\end{align*}
	Conditional on $H_j(n)$ we almost surely have $|\gamma(s)| \leq A/(t-s)$
	for all $s \in [0,t)$,
	so $|Y(s)| \leq X(s)$ for the $X$ defined in
	Lemma~\ref{thm:difference-integral-bound},
	and the result follows.
\end{proof}

\begin{proof}[Proof of Corollary~\ref{thm:u-high-probability-bound}]
	Let $M_s^l := \int_s^t C_r^l (I^l_r - p^l_s)\,\d s$.
	All $k$ terms $|M_s^l|$ can be bounded in exactly the same way,
	so write $M_s = M^j_s$ and assume that
	$X(s) = A\int_s^t \frac{\delta_{\mathrm{total}}(r)}{\sqrt{t-r}}\,\d r + |M(s)|$.
	Let $G(s) := \frac{\delta_{\mathrm{total}}(s)}{\sqrt{t-s}}$.
	We have the bounds
	\begin{align}
		\nonumber
		|u_0| &\leq |X(0)| + A\int_0^t \frac{|X(s)|}{t-s}\d s\\
		\label{eq:u0-decomposed}
		&\leq
		A\underbrace{\int_0^t |G(s)|\,\d s}_{(A1)} + \underbrace{|M_0|}_{(A2)} + A\underbrace{\int_0^t \frac{1}{t-s}\left(\int_s^t |G(r)|\,\d r\right)\d s}_{(A3)}
		+ A\underbrace{\int_0^t \frac{\left| M_s \right|}{t-s}\d s}_{(A4)},
	\end{align}
	and
	\begin{align}
		\nonumber
		\int_0^t \frac{|u_s|}{t-s}\,\d s
		&\leq
		\int_0^t \frac{|X(s)|}{t-s}\,\d s
		+ A \int_0^t \frac{1}{t-s}\int_s^t \frac{|X(r)|}{t-r}\,\d r \d s\\
		\nonumber
		&\leq
		\int_0^t \frac{|X(s)|}{t-s}\,\d s\\
		\label{eq:uintegral-decomposed}
		&\phantom{= }+
		A \underbrace{\int_0^t \frac{1}{t-s} \int_{s}^t \frac{1}{t-r} \int_r^t
			|G(v)|\,\d v \, \d r \, \d s}_{(B1)}
		+
		A \underbrace{\int_0^t \frac{1}{t-s} \int_s^t \frac{|M_r|}{t-r}\,\d r \,\d s}_{(B2)}.
	\end{align}
	We will first bound the terms involving $G$:
	(A1), (A3) and (B1).
	Almost by definition,
	(A1) is bounded by $A \int_0^t \frac{\delta_{\mathrm{total}}(s)}{\sqrt{t-s}}\,\d s
	\leq A \int_0^t \frac{\delta_{\mathrm{total}}(s)}{(t-s)^{51/100}}\,\d s$.
	
	By Tonelli's theorem, (A3) in \eqref{eq:u0-decomposed}
	can be written
	\begin{align*}
		\int_0^t \int_s^t \frac{|G(r)|}{t-s}\,\d r \d s
		&=
		\int_0^t \int_0^r \frac{|G(r)|}{t-s}\,\d s \d r\\
		&=
		\int_0^t |G(r)| \log \frac{t}{t-r}\,\d v\\
		&\leq
		A \int_0^t \delta_{\mathrm{total}}(r) \frac{\log\frac{t}{t-r}}{\sqrt{t-r}}\,\d r\\
		&\leq
		A \int_0^t \frac{\delta_{\mathrm{total}}(r)}{(t-r)^{51/100}}\d r,
	\end{align*}
	where the last inequality holds,
	modifying the constant $A$,
	because
	$\log\frac{t}{t-r} \leq 100 \left(\frac{t}{t-r}\right)^{1/100}$
	for all $r \in (0,t)$.
	
	We can also bound (B1) in \eqref{eq:uintegral-decomposed}
	by repeated applications of Tonelli's theorem:
	\begin{align*}
		\int_0^t \frac{1}{t-s} \int_{s}^{t} \frac{1}{t-r} \int_{r}^t |G(v)|\,\d v\, \d r\, \d s
		&\leq
		A\int_{s=0}^t \frac{1}{t-s} \int_{r=s}^{t} \frac{1}{t-r} \int_{v=r}^t \frac{\delta_{\mathrm{total}}(v)}{\sqrt{t-v}}\,\d v\, \d r\, \d s\\
		&=
		A \int_{v=0}^{t} \int_{r=0}^{v} \frac{\delta_{\mathrm{total}}(v)}{\sqrt{t-v}}\frac{1}{t-r} \int_{s=0}^{r} \frac{1}{t-s}\,\d s \, \d r \, \d v\\
		&=
		A \int_{v=0}^{t} \int_{r=0}^{v} \frac{\delta_{\mathrm{total}}(v)}{\sqrt{t-v}}\frac{1}{t-r}\log\frac{t}{t-r}\,\d r \,\d v\\
		&\leq
		A \int_{v=0}^t \int_{r=0}^v
		\frac{\delta_{\mathrm{total}}(v)}{\sqrt{t-v}}
		\frac{t^{1/100}}{(t-r)^{1 + \frac{1}{100}}} 
		\,\d r \,\d v\\
		&\leq
		A \int_0^t \frac{\delta_{\mathrm{total}}(v)}{(t-v)^{51/100}}\,\d v.
	\end{align*}
	Now we will bound the terms
	in \eqref{eq:u0-decomposed} and \eqref{eq:uintegral-decomposed}
	involving $M_s$:
	(A2), (A4) and (B2).
	First write $I^j_r - p^j_r = I^j_{\cparam\rdown{r/\cparam}} - p^j_r
	= (I^j_{\cparam\rdown{r/\cparam}} - p^j_{\cparam\rdown{r/\cparam}})
	+ (p^j_{\cparam\rdown{r/\cparam}} - p^j_r)$.
	Let $$M_s' := \int_s^t (I^j_{\cparam\rdown{r/\cparam}} - p^j_{\cparam\rdown{r/\cparam}}) C_r\,\d r,$$
	then by Proposition~\ref{thm:multi-fixed-crude}
	we have $|p^j_{\cparam\rdown{r/\cparam}} - p^j_r| \leq A\cparam$
	for all $r \in [0,T]$, so
	\begin{align*}
		|M_s - M_s'| = \left|
		\int_s^t (p^j_{\cparam\rdown{r/\cparam}} - p^j_r) C_r \,\d r
		\right|
		&\leq
		\int_s^t |p^j_{\cparam\rdown{r/\cparam}} - p^j_r| |C_r| \,\d r\\
		&\leq
		A \int_s^t \cparam (t-r)^{-1/2}\,\d r
		\leq A \cparam \sqrt{t-s}.
	\end{align*}
	Therefore in all of (A2), (A4) and (B2)
	we can replace $M$ with $M'$
	by adding an extra error term bounded by $A\sqrt{t}\,\cparam$.
	Recall that $t = n\cparam$.
	If $t-s < 10\cparam$ then we have the deterministic bound
	$|M'_s| \leq A\int_{0}^{t-s}r^{-1/2}\,\d r \leq A \sqrt{t-s}$.
	If $t-s \geq 10\cparam$,
	it is easy to bound the second moment of $|M'_s|$
	since $(I^j_{m\cparam} - p^j_{m\cparam})_{m \geq 1}$
	are martingale increments:
	\begin{align*}
		\E( |M_s'|^2 \,|\, H_j(n) ) \leq A\E( |M_s'|^2 )
		&=
		A\sum_{m=\rdown{s/\cparam}}^{n-1}
		\E( (I^j_{m\cparam} - p^j_{m\cparam})^2 )
		\left(\int_{m\cparam \vee s}^{(m+1)\cparam} |C_r| \,\d r\right)^2\\
		&\leq
		A \sum_{m=\rdown{s/\cparam}}^{n-1}
		\left( \int_{m\cparam \vee s}^{(m+1)\cparam} (n\cparam-r)^{-1/2} \,\d r \right)^2\\
		&=
		A \sum_{m=0}^{n - \rdown{s/\cparam}-1} \left( \int_{m\cparam}^{(m+1)\cparam \wedge (t-s)} r^{-1/2}\,\d r \right)^2\\
		&\leq
		A\cparam \log\left(\frac{t-s}{\cparam}\right) \leq A\cparam \log(1/\cparam).
	\end{align*}
	For $s \in [0,t-10\cparam]$, define the event
	\begin{align*}
		E_s := \{ |M_s'| \leq \cparam^{\frac{1}{2} - \frac{4}{R}} \},
	\end{align*}
	then by Markov's inequality
	\[
	1 - \P(E_s | H_j(n) ) \leq \P( |M_s'|^2 > \cparam^{1 - \frac{8}{R}} | H_j(n) ) \leq \frac{A\cparam\log(1/\cparam)}{\cparam^{1 - \frac{8}{R}}} \leq A \cparam^{\frac{8}{R}} \log(1/\cparam).
	\]
	We will choose a particular sequence $(s_m)_{1 \leq m \leq N_t}$
	in $[0,t-10\cparam]$ such that $\bigcap_{m=1}^{N_t} E_{s_m}$
	holds with high probability,
	and on this event $|M_s'|$ is bounded uniformly on $[0,t-10\cparam]$.
	Let $r_1 = 10\cparam$ and $s_1 = t - r_1$.
	for $m \geq 1$ set $r_{m+1} = r_m + \cparam^{\frac{1}{R}}\sqrt{r_m}$ and $s_m = \max(t-r_m,0)$.
	Then $r_{m+1} - r_m \geq \cparam^{\frac{1}{R}} \sqrt{r_m - r_{m-1}}$,
	and so $r_{m+1} - r_m \geq \cparam^{\frac{1}{2^{m}} + \frac{1}{R}(2 - \frac{1}{2^{m}})} \geq \cparam^{\frac{4}{R}}$ for $m$ greater than some constant $m_{R}$.
	Let $N_t := \min \{m : r_m \geq t \}$
	and it follows from the above that $N_t \leq A\cparam^{-\frac{4}{R}}$.
	Let $s \in (s_{m+1},s_m)$ for some $m \geq 1$,
	then
	\begin{align*}
		|M'_s - M'_{s_m}| \leq \int_{s}^{s_m} |C_r|\,\d r
		&\leq (s_m - s_{m+1}) \sup_{r \in [s_{m+1},s_m]} |C_r|\\
		&\leq A(r_{m+1} - r_m) r_m^{-1/2}\\
		&= A \cparam^{1/R}.
	\end{align*}
	Therefore, on $\bigcap_{m=1}^{N_t} E_{s_m}$,
	$|M'_s| \leq A\cparam^{1/R}$ for all $s \in [0,t-10\cparam]$,
	and by the union bound $\P(\bigcap_{m=1}^{N_t} E_{s_m} | H_j(n))
	\geq 1 - N_t A \cparam^{8/R}\log(1/\cparam) \geq 1 - A \cparam^{4/R} \log(1/\cparam)$.
	
	On this event we can bound (A4) of \eqref{eq:u0-decomposed},
	\begin{align*}
		\int_0^t \frac{|M_s'|}{t-s}\,\d s
		&\leq \int_0^{t-10\cparam} \frac{\cparam^{1/R}}{t-s}\,\d s
		+ \int_{t-10\cparam}^t \frac{A\sqrt{t-s}}{t-s}\,\d s\\
		&\leq A\cparam^{1/R}\log(1/\cparam) + A \cparam^{1/2},
	\end{align*}
	and similarly (B2) of \eqref{eq:uintegral-decomposed} has a bound
	\begin{align*}
		\int_0^t \frac{1}{t-s} \int_{s}^{t} \frac{|M_r'|}{t-r}\,\d r \,\d s
		\leq A \cparam^{1/R}( \log(1/\cparam))^2 + A \cparam^{1/2}.
	\end{align*}
	We can then use these bounds in \eqref{eq:u0-decomposed}
	and \eqref{eq:uintegral-decomposed} to complete our proof.
\end{proof}

\begin{proof}[Proof of Proposition~\ref{thm:p-gronwall-bound}]
	The five results from Lemma~\ref{thm:derivative-difference-bound}
	to Corollary~\ref{thm:u-high-probability-bound}
	show that for a given $n \leq \rdown{T/\cparam}$,
	the bound $\delta_\mathrm{p}(t)
	\leq A \int_0^t \frac{\delta_{\mathrm{total}}(s)}{(t-s)^{51/100}}\d s
	+ A \cparam^{1/R}(\log(1/\cparam))^2$
	holds with high (conditional) probability if $H_j(n)$ occurs.
	Let $B(t)$ be the event that this bound holds at time $t$.
	We will derive the claimed global bound
	from the fact that $\P(B(n\cparam) | H_j(n))$ is close to 1
	for fixed $n$.
	
	For simplicity assume that $\cparam^{-1 + \frac{1}{2R}}$
	and $T/\cparam^{\frac{1}{2R}}$ are integers.
	For $i \in \{0, \dots, T\cparam^{-\frac{1}{2R}}-1\}$ the
	probability that $H_j(n)$ does not hold for any
	$n \in \{ i{\cparam^{-1+ \frac{1}{2R}}} + 1, i{\cparam^{-1+ \frac{1}{2R}}} + 2, \dots, (i+1){\cparam^{-1+ \frac{1}{2R}}} \}$
	is bounded by $(1-p)^{\cparam^{-1 + \frac{1}{2R}}}$
	for a positive constant $p$,
	and so the expected number of such $i$ is less than $T\cparam^{-\frac{1}{2R}}(1-p)^{\cparam^{-1+\frac{1}{2R}}} \ll 1$.
	Therefore by Markov's inequality there exists,
	with very high probability,
	a sequence of times $\tau_0 < \tau_1 < \dots < \tau_{T\cparam^{-\frac{1}{2R}}-1}$
	such that for each $i$,
	$\tau_i \in (i\cparam^{\frac{1}{2R}},(i+1)\cparam^{\frac{1}{2R}}]$,
	$\tau_i/\cparam$ is an integer,
	$H_j(\tau_i/\cparam)$ holds,
	and $\tau_{i+1} - \tau_i \leq 2 \cparam^{\frac{1}{2R}}$.
	Since $\P(B(n\cparam) | H_j(n)) \geq 1 - A \cparam^{4/R}\log(1/\cparam)$
	for a fixed $n$,
	the conditional probability of
	$\bigcap_{i=0}^{T\cparam^{-\frac{1}{2R}}-1}B(\tau_i)$
	given $(\tau_i)_{0 \leq i \leq T\cparam^{-\frac{1}{2R}}-1}$
	is at least $1 - A \cparam^{\frac{7}{2R}}\log(1/\cparam)$.
	Therefore with high probability we have a sequence of times
	$(\tau_i)_{0 \leq i \leq T\cparam^{-\frac{1}{2R}}-1}$
	as above such that $B(\tau_i)$ holds for each $i$.
	
	Let $t \in (\tau_i, \tau_{i+1})$ for some $i \leq T\cparam^{-\frac{1}{2R}}-1$.
	Then by Proposition~\ref{thm:multi-fixed-crude},
	\begin{align*}
		\delta_{\mathrm{p}}(t)
		&= \delta_{\mathrm{p}}(\tau_i)
		+ (\delta_{\mathrm{p}}(t) - \delta_{\mathrm{p}}(\tau_i))\\
		&\leq \delta_{\mathrm{p}}(\tau_i) + A ( \tau_{i+1} - \tau_i )\\
		&\leq \delta_{\mathrm{p}}(\tau_i) + A \cparam^{\frac{1}{2R}}\\
		&\leq A \int_0^{\tau_i}\frac{\delta_{\mathrm{total}}(s)}{(\tau_i-s)^{51/100}}\d s
		+ A \cparam^{\frac{1}{2R}},
	\end{align*}
	where the extra $A \cparam^{1/R}(\log(1/\cparam))^2$ term
	is absorbed by the $A\cparam^{1/2R}$.
	Then since $t > \tau_i$,
	we have $(\tau_i - s)^{-51/100} < (t - s)^{-51/100}$
	for all $s \in (0,\tau_i)$,
	and so
	\[
	\delta_{\mathrm{p}}(t) \leq A \int_0^t \frac{\delta_{\mathrm{total}}(s)}{(t-s)^{51/100}}\d s
	+ A \cparam^{\frac{1}{2R}}
	\]
	for all $t \in [0,T]$, as claimed.
\end{proof}

\begin{proof}[Proof of Theorem~\ref{thm:multi-to-lpm}]
	Propositions~\ref{thm:x-gronwall-bound}
	and \ref{thm:p-gronwall-bound}
	give us $\delta_{\mathrm{total}}(t)
	\leq A \int_0^t \frac{\delta_{\mathrm{total}}(s)}{(t-s)^{51/100}}\,\d s
	+ A\cparam^{1/2R}$
	for all $t \in [0,T]$.
	Using a singular version of Gr\"{o}nwall's inequality
	(see \cite[Corollary 2]{singular-gronwall}),
	this implies
	$\delta_{\mathrm{total}}(T) \leq A \cparam^{1/2R}$ as required.
\end{proof}

\section{Proof of main theorem}

We did most of the hard work in Section~\ref{sec:singular-methods},
and will now bring together the intermediate convergence results
to show convergence of the ALE to the LPM.

\begin{proof}[Proof of Theorem~\ref{thm:ale-lpm}]
	Let $\xi_t$ be the driving function of the ALE,
	and $\bar\mu_t$ the driving measure of the LPM.
	To show that the two converge in distribution,
	we will show that $d_{\mathrm{BW}}(\delta_{\xi_t} \otimes m_{[0,T]}, \bar\mu_t \otimes m_{[0,T]}) \to 0$
	in probability as $\cparam \to 0$.
	
	Note that another way of writing \eqref{eq:aux-rotation}
	for the auxiliary process is
	\begin{align*}
		\Phi_n^* = R_{\delta_1 + \dots + \delta_n} \circ \left( \Phi_0 \circ f_{\theta_1^*-\delta_1} \circ f_{\theta^*_2 - (\delta_1 + \delta_2)}
		\circ \dots \circ f_{\theta^*_n - (\delta_1 + \dots + \delta_n)} \right)
		\circ R_{-(\delta_1 + \dots +\delta_n)}
	\end{align*}
	So let $\xi_t^*$ be the driving measure for the angle sequence
	$(\theta^*_n - (\delta_1 + \dots + \delta_n) )_{n \leq \rdown{T/\cparam}}$.
	By Theorem~\ref{thm:ale-aux-global} we have a coupling between
	$\xi^*$ and $\xi$,
	such that if we define the event
	$E_1 = \{ \tau_D \wedge \couplingfailure > \rdown{T/\cparam} \}$,
	then $\P(E_1) \geq 1 - A\cparam^{1/2}$.
	On $E_1$ note that
	\begin{align*}
		\sup_{t \in [0,T]} |\xi_t - \xi^*_t| \leq \left(\frac{T}{\cparam} + 2\right)D.
	\end{align*}
	
	Next, to pass from the auxiliary model to the multinomial model,
	let $\xi^{\mathrm{multi}}_t$
	be the driving measure of the multinomial model,
	define the event
	$E_2 = \{ \tau_{\not=} > \rdown{T/\cparam} \}$,
	and note that on $E_2$ the models coincide:
	$\xi^{\mathrm{multi}}_t = \xi^*_t$ for all $t \in [0,T]$.
	By Corollary~\ref{thm:aux-multi},
	$\P(E_2) \geq 1 - AT\cparam^{-2}D$.
	
	Finally, by Corollary~\ref{thm:weak-conv-multi-lpm},
	\begin{align*}
		d_{\mathrm{BW}}(\delta_{\xi^{\mathrm{multi}}_t} \otimes m_{[0,T]}, \bar \mu_t \otimes m_{[0,T]}) \to 0
	\end{align*}
	in probability as $\cparam \to 0$.
	
	Then by the triangle inequality,
	\begin{align*}
		d_{\mathrm{BW}}(\delta_{\xi_t} \otimes m_{[0,T]}, \bar\mu_t \otimes m_{[0,T]})
		&\leq
		d_{\mathrm{BW}}(\delta_{\xi_t} \otimes m_{[0,T]}, \delta_{\xi^*_t} \otimes m_{[0,T]})\\
		&\phantom{\leq}+ d_{\mathrm{BW}}(\delta_{\xi^*_t} \otimes m_{[0,T]}, \bar\mu_t \otimes m_{[0,T]}),
	\end{align*}
	and on $E_1 \cap E_2$ this is bounded by
	\begin{align*}
		T \left( \frac{T}{\cparam} + 2 \right)\! D
		+
		d_{\mathrm{BW}}(\delta_{\xi^{\mathrm{multi}}_t} \otimes m_{[0,T]}, \bar \mu_t \otimes m_{[0,T]}).
	\end{align*}
	Since $\cparam^{-1}D = o(1)$,
	this upper bound tends to zero in probability as $\cparam \to 0$,
	and $\P( E_1 \cap E_2 ) \to 1$,
	giving us
	$d_{\mathrm{BW}}(\delta_{\xi_t} \otimes m_{[0,T]}, \bar\mu_t \otimes m_{[0,T]}) \overset{p}{\to} 0$ as required.
\end{proof}

\section*{Acknowledgements}

The initial version of this paper was completed
while studying for a PhD at Lancaster University,
and many thanks are due to my then-supervisor Amanda Turner
for her guidance and patience.
Thanks also to both examiners Dmitry Belyaev and Dmitry Korshunov
who provided useful feedback.
I benefited significantly from discussions with Alan Sola
and Fredrik Viklund.
An anonymous reviewer gave many suggestions which improved the presentation.

\section*{Declarations}

\textbf{Funding:} This work was begun under the Engineering and Physical Sciences Research Council studentship 2118765. Part of it was performed while the author was visiting the Mathematical Sciences Research Institute supported by the National Science Foundation (grant number DMS-1928930), and later while working under the EPSRC grant EP/T028653/1 and Royal Society grant RF{\textbackslash}ERE{\textbackslash}231149.


\end{document}